\documentclass[a4paper,english,fontsize=11pt,parskip=half,abstracton]{scrartcl}
\usepackage{babel}
\usepackage[utf8]{inputenc}
\usepackage[T1]{fontenc}
\usepackage[a4paper,left=20mm,right=20mm,top=30mm,bottom=30mm]{geometry}
\usepackage{amsmath}
\usepackage{amsthm}
\usepackage{amssymb}
\usepackage{enumerate}
\usepackage{aliascnt}
\usepackage[bookmarks=true,
            pdftitle={Survey on perfect isometries},
            pdfauthor={Benjamin Sambale},
            pdfkeywords={},
            pdfstartview={FitH}]{hyperref}

\newtheorem{Thm}{Theorem}[section]
\newaliascnt{Lem}{Thm}
\newtheorem{Lem}[Lem]{Lemma}
\aliascntresetthe{Lem}
\newaliascnt{Prop}{Thm}
\newtheorem{Prop}[Prop]{Proposition}
\aliascntresetthe{Prop}
\newaliascnt{Cor}{Thm}
\newtheorem{Cor}[Cor]{Corollary}
\aliascntresetthe{Cor}
\newaliascnt{Con}{Thm}
\newtheorem{Con}[Con]{Conjecture}
\aliascntresetthe{Con}
\theoremstyle{definition}
\newaliascnt{Def}{Thm}
\newtheorem{Def}[Def]{Definition}
\aliascntresetthe{Def}
\newaliascnt{Ex}{Thm}
\newtheorem{Ex}[Ex]{Example}
\aliascntresetthe{Ex}
\newaliascnt{Rmk}{Thm}

\aliascntresetthe{Rmk}


\setcounter{MaxMatrixCols}{25}
\allowdisplaybreaks[1]

\renewcommand{\phi}{\varphi}
\newcommand{\C}{\operatorname{C}}
\newcommand{\N}{\operatorname{N}}
\newcommand{\Z}{\operatorname{Z}}
\newcommand{\PI}{\operatorname{PI}}
\newcommand{\ZZ}{\mathbb{Z}}
\newcommand{\CC}{\mathbb{C}}
\newcommand{\QQ}{\mathbb{Q}}

\newcommand{\id}{\mathrm{id}}
\newcommand{\J}{\mathrm{J}}
\newcommand{\AGammaL}{\operatorname{A\Gamma L}}
\newcommand{\GammaL}{\operatorname{\Gamma L}}

\newcommand{\Aut}{\operatorname{Aut}}

\newcommand{\Sym}{\operatorname{Sym}}

\newcommand{\GL}{\operatorname{GL}}
\newcommand{\SL}{\operatorname{SL}}

\newcommand{\Irr}{\operatorname{Irr}}
\newcommand{\IBr}{\operatorname{IBr}}

\newcommand{\Bl}{\operatorname{Bl}}
\newcommand{\Gal}{\operatorname{Gal}}

\newcommand{\Ker}{\operatorname{Ker}}

\newcommand{\diag}{\operatorname{diag}}

\mathchardef\ordinarycolon\mathcode`\:  
 \mathcode`\:=\string"8000
 \begingroup \catcode`\:=\active
   \gdef:{\mathrel{\mathop\ordinarycolon}}
 \endgroup

\title{Survey on perfect isometries}
\author{Benjamin Sambale\footnote{Institut für Algebra, Zahlentheorie und Diskrete Mathematik, Leibniz Universität Hannover, Welfengarten 1, 30167 Hannover, Germany,
\href{mailto:sambale@math.uni-hannover.de}{sambale@math.uni-hannover.de}}}
\date{\today}

\begin{document}
\frenchspacing
\maketitle
\begin{abstract}\noindent
This paper is an introduction and a survey to the concept of perfect isometries which was first introduced by Michel Broué in 1990. Our main aim is to provide proofs of numerous results scattered in the literature. On the other hand, we make some observations which did not appear anywhere before.
\end{abstract}

\textbf{Keywords:} perfect isometries, Broué's conjecture\\
\textbf{AMS classification:} 20C15, 20C20
\tableofcontents

\section{Introduction}

In 1990, Michel Broué~\cite{Broue} introduced the concept of \emph{perfect isometries} to relate the character theories of $p$-blocks of finite groups. 
Although his main interest was in Brauer correspondent blocks with abelian defect group, perfect isometries arose in more general settings as well. In the present paper we survey various definitions and properties related to perfect isometries which are scattered in the literature. We give examples and proofs whenever possible. In some places we will extend the existent literature, for instance by giving a characterization of nilpotent perfectly isometric blocks (see \autoref{hertweck}).
On the other hand, we will only work on the level of characters and do not employ higher categorical concepts like derived equivalences.  

The paper is organized as follows. In the second section we revisit Broué's original definition of perfect isometries and deduce their basic properties.
After that, we consider the role of the signs coming from perfect isometries. 
In particular, we discuss the group of self perfect isometries to resolve a conjecture by Ruengrot~\cite{Ruengrot}. 
In the following section we investigate a number of invariants of blocks which are preserved by perfect isometries. This is useful for distinguishing perfect isometry classes. In Section~5 we prove that nilpotent blocks are perfectly isometric if and only if their defect groups have the same character table. One direction of this equivalence follows easily from Broué--Puig's theorem on nilpotent blocks~\cite{BrouePuig}. The other direction uses a result of Hertweck~\cite{Hertweck}.
In the next section we generalize a sufficient criterion for the existence of perfect isometries by Horimoto--Watanabe~\cite{WatanabePerfIso}. This naturally leads us to Broué's notion of \emph{isotypies} and Brauer's notion of the \emph{type} of a block. Finally in the last section, we give an overview of Broué's Conjecture on blocks with abelian defect groups. 

\section{Definitions and their justification}

Our notation is fairly standard and can be found in Navarro~\cite{Navarro}. 
For the convenience of the reader we recall the basics. 
The cyclotomic field of degree $n$ is denoted by $\QQ_n$. 
For a finite group $G$ and a prime $p$, we denote the set of $p$-regular elements of $G$ by $G^0$. The $p$-part of the order of $G$ is $|G|_p$. For $g\in G$ let $g_p$ and $g_{p'}$ be the $p$-factor respectively the $p'$-factor of $g$. Let $\Irr(G)$ and $\IBr(G)$ be the sets of irreducible complex characters and irreducible Brauer characters of $G$ respectively. The corresponding sets of generalized characters are denoted by $\ZZ\Irr(G)$ and $\ZZ\IBr(G)$. 
For class functions $\chi$, $\psi$ of $G$ (or of $G^0$) let
\begin{align*}
[\chi,\psi]&:=\frac{1}{|G|}\sum_{g\in G}\chi(g)\psi(g^{-1}),\\
[\chi,\psi]^0&:=\frac{1}{|G|}\sum_{g\in G^0}\chi(g)\psi(g^{-1}).
\end{align*}
Every $\chi\in\Irr(G)$ gives rise to a primitive central idempotent
\[e_\chi:=\frac{\chi(1)}{|G|}\sum_{g\in G}\chi(g^{-1})g\in\Z(\CC G).\]
The decomposition numbers $d_{\chi\psi}$ are defined by $\chi^0:=\chi_{G^0}=\sum_{\phi\in\IBr(G)}d_{\chi\phi}\phi$. 
More generally, for a $p$-element $u\in G$ there exist generalized decomposition numbers $d^u_{\chi\psi}$ such that
\[\chi(us)=\sum_{\phi\in\IBr(\C_G(u))}d^u_{\chi\phi}\phi(s)\qquad(s\in\C_G(u)^0).\]
Every $\phi\in\IBr(G)$ determines a projective indecomposable character $\Phi_\phi:=\sum_{\chi\in\Irr(G)}d_{\chi\psi}\chi$. 

At the moment it suffices to consider blocks as sets of irreducible characters (as they are introduced in Isaacs~\cite[Definition~15.17]{Isaacs}). In Section~3 we will start working with $p$-modular systems.
In the following we fix $p$-blocks $B$ and $B'$ of finite groups $G$ and $H$ respectively. 

\begin{Def}[Broué~{\cite[Définition~1.1]{Broue}}]\label{broue}
An isometry $I:\ZZ\Irr(B)\to\ZZ\Irr(B')$ (with respect to the usual scalar product) is called \emph{perfect} if the map
\[\mu_I:G\times H\to\CC,\qquad (g,h)\mapsto\sum_{\chi\in\Irr(B)}\chi(g)I(\chi)(h)\]
satisfies
\begin{enumerate}
\item[(sep)] If exactly one of $g$ and $h$ is $p$-regular, then $\mu_I(g,h)=0$.
\item[(int)] $\mu_I(g,h)/\lvert\C_G(g)\rvert_p$ and $\mu_I(g,h)/\lvert\C_H(h)\rvert_p$ are algebraic integers for $g\in G$ and $h\in H$.
\end{enumerate}
In this case we say that $B$ and $B'$ are \emph{perfectly isometric}.
\end{Def}

Since $\Irr(B)$ is an orthonormal basis of $\ZZ\Irr(B)$, we have $I(\chi)\in\pm\Irr(B')$ for every $\chi\in\Irr(B)$ in the situation of \autoref{broue}. It follows that $\mu_I\in\ZZ\Irr(B\otimes B')$ where $B\otimes B'$ describes the block of $G\times H$ consisting of the characters $\Irr(B)\times\Irr(B')$. If $\mu_I$ is given, one can recover $I$ via
\begin{equation}\label{alter}
\begin{split}
I(\chi)(h)&=\sum_{\psi\in\Irr(B)}I(\psi)(h)[\chi,\psi]=\frac{1}{|G|}\sum_{\psi\in\Irr(B)}I(\psi)(h)\sum_{g\in G}\chi(g)\psi(g^{-1})\\
&=\frac{1}{|G|}\sum_{g\in G}\chi(g)\sum_{\psi\in\Irr(B)}\psi(g^{-1})I(\psi)(h)=\frac{1}{|G|}\sum_{g\in G}\chi(g)\mu_I(g^{-1},h)
\end{split}
\end{equation}
for $\chi\in\Irr(B)$ and $h\in H$. 

In the following we will often write a perfect isometry $I:\ZZ\Irr(B)\to\ZZ\Irr(B')$ in the form $I(\chi)=\epsilon_I(\chi)\widehat{I}(\chi)$ where $\widehat{I}:\Irr(B)\to\Irr(B')$ is a bijection and $\epsilon_I(\chi)=\pm 1$.

\begin{Prop}[Kiyota~{\cite[Theorem~2.2]{KiyotaPI}}]
Condition \textnormal{(int)} can be replaced by
\begin{enumerate}
\item[\textnormal{(int')}] If $g$ and $h$ are $p$-singular, then $\mu_I(g,h)/\lvert\C_G(g)\rvert_p$ and $\mu_I(g,h)/\lvert\C_H(h)\rvert_p$ are algebraic integers.
\end{enumerate}
\end{Prop}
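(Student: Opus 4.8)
The implication $\textnormal{(int)}\Rightarrow\textnormal{(int')}$ is trivial, so the plan is to assume that $I$ satisfies $\textnormal{(sep)}$ and $\textnormal{(int')}$ and to recover $\textnormal{(int)}$. First I would clear away the easy cases. If exactly one of $g$ and $h$ is $p$-regular, then $\mu_I(g,h)=0$ by $\textnormal{(sep)}$, so both quotients in $\textnormal{(int)}$ vanish; and if both $g$ and $h$ are $p$-singular, the assertion is literally $\textnormal{(int')}$. Hence the only substantive case is $g\in G^0$, $h\in H^0$, and --- perhaps surprisingly --- it is again $\textnormal{(sep)}$, not $\textnormal{(int')}$, that takes care of it.

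So let $g\in G^0$ and $h\in H^0$, and fix $P\in\Syl_p(\C_G(g))$, so that $\lvert P\rvert=\lvert\C_G(g)\rvert_p$. Since $g$ is $p$-regular, for every $u\in P\setminus\{1\}$ the element $gu$ is $p$-singular while $h$ remains $p$-regular, so $\mu_I(gu,h)=0$ by $\textnormal{(sep)}$; thus the sum $\sum_{u\in P}\mu_I(gu,h)$ collapses to its single term with $u=1$, and
\[
\mu_I(g,h)=\sum_{u\in P}\mu_I(gu,h)=\sum_{\chi\in\Irr(B)}I(\chi)(h)\sum_{u\in P}\chi(gu).
\]
Now $g$ centralises $P$ and, $\lvert\langle g\rangle\rvert$ being coprime to $p$, we have $\langle g\rangle\cap P=1$, so $\langle g\rangle P$ is an internal direct product $\langle g\rangle\times P$. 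Restricting $\chi$ to $\langle g\rangle\times P$ and decomposing it into irreducible characters $\lambda\otimes\rho$ ($\lambda\in\Irr(\langle g\rangle)$, $\rho\in\Irr(P)$) with non-negative integer multiplicities, the summation over $u\in P$ kills every constituent with $\rho$ non-trivial and leaves each remaining one as $\lvert P\rvert$ times an algebraic integer; hence $\lvert P\rvert^{-1}\sum_{u\in P}\chi(gu)$ is an algebraic integer. Since $I$ is an isometry and $\Irr(B)$ is orthonormal, $I(\chi)\in\pm\Irr(B')$, so $I(\chi)(h)$ is an algebraic integer, and we conclude that $\mu_I(g,h)/\lvert\C_G(g)\rvert_p$ is an algebraic integer. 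Averaging $\mu_I(g,hv)$ over $v$ in a Sylow $p$-subgroup of $\C_H(h)$ --- where now the cross terms vanish because it is $g$, not $h$, that is $p$-regular --- gives the companion divisibility by $\lvert\C_H(h)\rvert_p$.

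I do not expect a genuine obstacle here. The one thing that has to be spotted is that the doubly $p$-regular case, although invisible in $\textnormal{(int')}$, is automatically enforced by the separation condition through averaging over a Sylow $p$-subgroup of a centraliser; everything else is purely formal bookkeeping.
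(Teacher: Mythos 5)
Your proof is correct, and it takes a genuinely different route from the paper's. The paper works with the class function $\psi_g:H\to\CC$, $x\mapsto\mu_I(g,x)$: since $\psi_g$ vanishes on the $p$-singular elements of $H$ by (sep), it is a $\CC$-linear combination $\sum a_\phi\Phi_\phi$ of projective indecomposable characters, the coefficients $a_\phi=[\psi_g,\phi]^0$ are shown to be algebraic integers, and the conclusion $\mu_I(g,h)/\lvert\C_H(h)\rvert_p\in\mathbf{R}$ then follows from the standard divisibility property $\Phi_\phi(h)/\lvert\C_H(h)\rvert_p\in\mathbf{R}$ (three citations to Navarro's book). Your argument instead avoids Brauer characters entirely: you pick $P\in\Syl_p(\C_G(g))$, use (sep) to write $\mu_I(g,h)=\sum_{u\in P}\mu_I(gu,h)$, and then exploit the direct-product decomposition $\langle g\rangle P=\langle g\rangle\times P$ to show that $\lvert P\rvert^{-1}\sum_{u\in P}\chi(gu)$ is an algebraic integer for each $\chi$. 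This is self-contained, elementary, and in fact re-proves the special case of the Navarro lemma that the paper cites rather than proving. The trade-off is essentially pedagogical: the paper's proof is shorter given the machinery already in scope, while yours makes visible exactly where the $p$-part of the centraliser enters and requires only orthogonality of characters of a finite abelian group. Both correctly observe that (sep), not (int'), is what drives the doubly $p$-regular case.

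One small point worth stating explicitly when writing this up: for $u\in P\setminus\{1\}$ the element $gu$ is $p$-singular precisely because $u$ commutes with the $p$-regular element $g$, so $(gu)_p=u\neq 1$; this is what licenses the use of (sep) and is the reason $P$ must be a Sylow $p$-subgroup of $\C_G(g)$ and not merely of $G$.
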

\begin{proof}
Assuming that (sep) and (int') hold, it suffices to show (int) for $p$-regular elements $g$ and $h$. The class function $\psi_g:H\to\CC$, $x\mapsto\mu_I(g,x)$ vanishes on the $p$-singular elements by (sep). Hence by \cite[Theorem~2.13]{Navarro} there exist $a_\phi\in\CC$ for $\phi\in\IBr(H)$ such that $\psi_g=\sum_{\phi\in\IBr(H)}a_\phi\Phi_\phi$. Moreover, \[a_\phi=[\psi_g,\phi]^0=[\psi_g,\hat{\phi}]=\sum_{\chi\in\Irr(B)}\chi(g)[I(\chi),\hat\phi]\] 
are algebraic integers by \cite[Lemma~2.15]{Navarro}. Therefore, by \cite[Lemma~2.21]{Navarro}, also 
\[\frac{\mu_I(g,h)}{\lvert\C_H(h)\rvert_p}=\frac{\psi_g(h)}{\lvert\C_H(h)\rvert_p}=\sum_{\phi\in\IBr(H)}a_\phi\frac{\Phi_\phi(h)}{\lvert\C_H(h)\rvert_p}\]
is an algebraic integer. Similarly, $\mu_I(g,h)/\lvert\C_G(g)\rvert_p$ is an algebraic integer. 
\end{proof}

\begin{Prop}\label{equiv}
Perfect isometry is an equivalence relation.
\end{Prop}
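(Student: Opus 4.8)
The plan is to verify the three defining properties of an equivalence relation: reflexivity, symmetry, and transitivity.

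For reflexivity, I would exhibit the identity map $\id:\ZZ\Irr(B)\to\ZZ\Irr(B)$ as a perfect isometry. Here $\mu_{\id}(g,h)=\sum_{\chi\in\Irr(B)}\chi(g)\chi(h)$ is (up to sign conventions and the restriction to $B$) a piece of the second orthogonality relation; more precisely, for $g,h\in G$ the sum $\sum_{\chi\in\Irr(G)}\chi(g)\overline{\chi(h)}$ equals $|\C_G(g)|$ if $g,h$ are conjugate and $0$ otherwise. The column orthogonality restricted to a single block is controlled by the block-theoretic orthogonality relations, and one knows $\mu_{\id}(g,h)=0$ unless $g_{p'}$ and $h_{p'}$ are conjugate, in particular unless $g,h$ are both $p$-regular or both $p$-singular, giving (sep); and when nonzero, $\mu_{\id}(g,h)/|\C_G(g)|_p$ is easily seen to be an algebraic integer (indeed the relevant quantity is essentially a generalized decomposition number times a $p'$-index), giving (int). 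I would keep this brief and cite the relevant orthogonality relation.

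For symmetry, suppose $I:\ZZ\Irr(B)\to\ZZ\Irr(B')$ is a perfect isometry; I claim $I^{-1}$ is one as well. The key observation is the relation between $\mu_I$ and $\mu_{I^{-1}}$: writing $I(\chi)=\epsilon_I(\chi)\widehat I(\chi)$, one has $I^{-1}(\psi)=\epsilon_I(\widehat I^{-1}(\psi))\widehat I^{-1}(\psi)$, and a direct computation shows $\mu_{I^{-1}}(h,g)=\sum_{\psi\in\Irr(B')}\psi(h)I^{-1}(\psi)(g)=\sum_{\chi\in\Irr(B)}I(\chi)(h)\chi(g)=\mu_I(g,h)$. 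Thus $\mu_{I^{-1}}$ is obtained from $\mu_I$ simply by swapping the two arguments, so (sep) and (int) for $I^{-1}$ are immediate from the (symmetric) statements of (sep) and (int) for $I$.

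For transitivity, suppose $I:\ZZ\Irr(B)\to\ZZ\Irr(B')$ and $J:\ZZ\Irr(B')\to\ZZ\Irr(B'')$ are perfect isometries between blocks of $G$, $H$, $K$; I must show $J\circ I$ is a perfect isometry. Composition of isometries is an isometry, so the real content is (sep) and (int) for $\mu_{J\circ I}$. The crucial identity here is a kind of ``convolution'': one computes, using \eqref{alter} to recover $I$ from $\mu_I$,
\[
\mu_{JI}(g,k)=\sum_{\chi\in\Irr(B)}\chi(g)J(I(\chi))(k)=\frac{1}{|H|}\sum_{h\in H}\mu_I(g,h^{-1})\,\mu_J(h,k).
\]
Now I expect the main obstacle to be verifying (int) for the composite: once we know $\mu_{JI}(g,k)=\tfrac1{|H|}\sum_h \mu_I(g,h^{-1})\mu_J(h,k)$, we must show $\mu_{JI}(g,k)/|\C_G(g)|_p$ is an algebraic integer. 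The trick is to group the sum over $H$ into conjugacy classes: $\tfrac1{|H|}\sum_{h\in H}=\sum_{h\in R}\tfrac1{|\C_H(h)|}$ where $R$ is a set of representatives, and then write $\tfrac{1}{|\C_H(h)|}=\tfrac{1}{|\C_H(h)|_p}\cdot\tfrac{1}{|\C_H(h)|_{p'}}$; the factor $\mu_I(g,h^{-1})/|\C_G(g)|_p$ need not itself be an integer, so instead one uses $\mu_I(g,h^{-1})/|\C_H(h)|_p$ (an algebraic integer by (int) for $I$) together with $\mu_J(h,k)/|\C_H(h)|_p$ — wait, that over-divides; the correct bookkeeping is that $\mu_I(g,h^{-1})/|\C_H(h)|_p$ is an algebraic integer and $\mu_J(h,k)$ is an algebraic integer, and $1/|\C_H(h)|_{p'}$ times an algebraic integer that is also a rational combination works out because $|\C_H(h)|_{p'}$ is prime to $p$. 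I would also invoke (sep) for $I$ and $J$ to see that the only contributions to the sum come from $h$ with $h_{p'}$ linked appropriately to $g_{p'}$ and $k_{p'}$, which forces $g,k$ to be ``on the same side'' of $p$-singularity and yields (sep) for $J\circ I$. Making this divisibility bookkeeping precise — tracking exactly which powers of $p$ appear and confirming everything lands in the ring of algebraic integers — is the step that requires the most care; the rest is routine.
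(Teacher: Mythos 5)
Your overall plan — identity for reflexivity, argument-swap for symmetry, the convolution formula $\mu_{JI}(g,k)=\tfrac{1}{|H|}\sum_h\mu_I(g,h^{-1})\mu_J(h,k)$ for transitivity — is exactly the paper's strategy, and the symmetry step matches the paper line for line. However, there is a genuine gap in the (int) verification for transitivity, which you yourself flag as the step requiring the most care, and which your bookkeeping does not resolve.

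First, a correction: you write that ``the factor $\mu_I(g,h^{-1})/|\C_G(g)|_p$ need not itself be an integer,'' but (int) for $I$ asserts precisely that this quantity \emph{is} an algebraic integer, for every $g\in G$ and $h\in H$. Both halves of (int) are available; the issue is not which of the two factors of $\mu_I$ to divide, but how to handle the $p'$-part of $|\C_H(h)|$. The clean split is
\[
\frac{\mu_{JI}(g,k)}{|\C_G(g)|_p}=\sum_{h\in\mathcal{R}}\frac{\mu_I(g,h^{-1})}{|\C_G(g)|_p}\cdot\frac{\mu_J(h,k)}{|\C_H(h)|},
\]
where the first factor is an algebraic integer by (int) for $I$ and the second is $\tfrac{1}{|\C_H(h)|_{p'}}$ times the algebraic integer $\mu_J(h,k)/|\C_H(h)|_p$. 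Multiplying the whole sum by $|H|_{p'}$ makes every summand an algebraic integer (since $|H|_{p'}/|\C_H(h)|_{p'}\in\ZZ$), so $|H|_{p'}\mu_{JI}(g,k)/|\C_G(g)|_p$ is an algebraic integer. The missing punchline is then a coprimality argument: $\mu_{JI}(g,k)$ itself is an algebraic integer (it is a value of a generalized character), $\gcd(|H|_{p'},|\C_G(g)|_p)=1$, so writing $1=a|H|_{p'}+b|\C_G(g)|_p$ one gets $\mu_{JI}(g,k)/|\C_G(g)|_p = a\cdot|H|_{p'}\mu_{JI}(g,k)/|\C_G(g)|_p + b\cdot\mu_{JI}(g,k)$, an algebraic integer. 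Your sketch gestures at ``$|\C_H(h)|_{p'}$ is prime to $p$'' but never performs this Bezout step, which is what actually promotes $p$-integrality to genuine algebraic integrality; without it you have at best shown membership in the local ring $\mathcal{O}$, not in $\mathbf{R}$.

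Two smaller inaccuracies: block orthogonality vanishes when the \emph{$p$-parts} $g_p$, $h_p$ are not conjugate, not the $p'$-parts as you wrote (fortunately this still yields (sep)); and for the reflexivity (int) check, the paper's argument is more concrete than ``essentially a generalized decomposition number times a $p'$-index'' — it rewrites $\mu_{\id}(g,h)$ as $\sum_{\mu\in\IBr(\C_G(x))}\mu(h_{p'})\Phi_\mu(g_{p'})$ with $x=g_p=h_p^{-1}$ and then applies the divisibility of projective character values by $|\C_G(g)|_p$; you should either reproduce that calculation or cite it precisely.
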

\begin{proof}
We show first that the identity on $\ZZ\Irr(B)$ is perfect (this is also explicit in \cite[Lemma~3.1]{Sanus}). By \cite[Corollary~5.11 (block orthogonality)]{Navarro}, (sep) holds and for (int) we may assume that $g_p=h_p^{-1}=:x$. Then by \cite[Lemmas~5.1 and 5.13]{Navarro} we have
\begin{align*}
\mu_\id(g,h)&=\sum_{\chi\in\Irr(B)}\chi(g)\overline{\chi}(h^{-1})=\sum_{\chi\in\Irr(B)}\sum_{\phi,\mu\in\IBr(\C_G(x))}d_{\chi\phi}^x\overline{d_{\chi\mu}^x}\phi(g_{p'})\mu(h_{p'})\\
&=\sum_{\phi,\mu\in\IBr(\C_G(x))}\phi(g_{p'})\mu(h_{p'})\sum_{\chi\in\Irr(B)}d_{\chi\phi}^x\overline{d_{\chi\mu}^x}=\sum_{\phi,\mu\in\IBr(\C_G(x))}\phi(g_{p'})\mu(h_{p'})c_{\phi\mu}\\
&=\sum_{\phi\in\IBr(\C_G(x))}\phi(g_{p'})\Phi_\phi(h_{p'})=\sum_{\mu\in\IBr(\C_G(x))}\mu(h_{p'})\Phi_\mu(g_{p'}).
\end{align*}
Now the claim follows from \cite[Lemma~2.21]{Navarro}, since $\C_G(x)\cap\C_G(g_{p'})=\C_G(g)$ and $\C_G(x)\cap\C_G(h_{p'})=\C_G(h)$.

Next let $I:\ZZ\Irr(B)\to\ZZ\Irr(B')$ be a perfect isometry. Then $I^{-1}:\ZZ\Irr(B')\to\ZZ\Irr(B)$ is an isometry. For $g\in G$ and $h\in H$ we have
\begin{equation}\label{inverse}
\mu_I(g,h)=\sum_{\chi\in\Irr(B)}\chi(g)I(\chi)(h)=\sum_{\psi\in\Irr(B')}I^{-1}(\psi)(g)\psi(h)=\mu_{I^{-1}}(h,g).
\end{equation}
This shows that $I^{-1}$ is perfect.

Finally, let $I:\ZZ\Irr(B)\to\ZZ\Irr(B')$ and $J:\ZZ\Irr(B')\to\ZZ\Irr(B'')$ be perfect isometries where $B''$ is a block of a finite group $K$. We need to show that the isometry $JI=J\circ I$ is perfect. For $g\in G$ and $k\in K$ we have
\begin{align*}
\frac{1}{|H|}\sum_{h\in H}\mu_I(g,h^{-1})\mu_J(h,k)&=\frac{1}{|H|}\sum_{h\in H}\sum_{\chi\in\Irr(B)}\sum_{\psi\in\Irr(B')}\chi(g)I(\chi)(h^{-1})\psi(h)J(\psi)(k)\\
&=\frac{1}{|H|}\sum_{\chi\in\Irr(B)}\chi(g)\sum_{\psi\in\Irr(B')}J(\psi)(k)\sum_{h\in H}I(\chi)(h^{-1})\psi(h)\\
&=\sum_{\chi\in\Irr(B)}\chi(g)\sum_{\psi\in\Irr(B')}J(\psi)(k)[I(\chi),\psi]\\
&=\sum_{\chi\in\Irr(B)}\chi(g)JI(\chi)(k)=\mu_{JI}(g,k).
\end{align*}
If exactly one of $g$ and $k$ is $p$-regular, then $\mu_I(g,h^{-1})=0$ or $\mu_J(h,k)=0$ for every $h\in H$. Hence, $JI$ fulfills (sep). To prove (int), let $\mathcal{R}$ be a set of representatives for the conjugacy classes of $H$. Then
\begin{align*}
\frac{\mu_{JI}(g,k)}{\lvert\C_G(g)\rvert_p}&=\frac{1}{|H|\lvert\C_G(g)\rvert_p}\sum_{h\in\mathcal{R}}|H:\C_H(h)|\mu_I(g,h^{-1})\mu_J(h,k)\\
&=\sum_{h\in\mathcal{R}}\frac{\mu_I(g,h^{-1})}{\lvert\C_G(g)\rvert_p}\frac{\mu_J(h,k)}{\lvert\C_H(h)\rvert}
\end{align*}
and $|H|_{p'}\mu_{JI}(g,k)/\lvert\C_G(g)\rvert_p$ is an algebraic integer. Since $\lvert\C_G(g)\rvert_p$ and $|H|_{p'}$ are coprime and $\mu_{JI}(g,k)$ is an algebraic integer as well, it follows that $\mu_{JI}(g,k)/\lvert\C_G(g)\rvert_p$ is an algebraic integer.
The same holds for $\mu_{JI}(g,k)/\lvert\C_K(k)\rvert_p$ and we are done. 
\end{proof}

\begin{Ex}\label{ex1}\hfill
\begin{enumerate}[(i)]
\item\label{ex1a} Every $\alpha\in\Aut(G)$ induces a perfect isometry $\ZZ\Irr(B)\to\ZZ\Irr(\alpha(B))$ where it is understood that the action of $\Aut(G)$ on $\Irr(G)$ permutes blocks. 

\item Every $\gamma\in\Gal(\QQ_{|G|}|\QQ)$ induces a perfect isometry $\ZZ\Irr(B)\to\ZZ\Irr(\gamma(B))$ where again $\Gal(\QQ_{|G|}|\QQ)$ acts on $\Irr(G)$ (for a stronger claim see Kessar~\cite{Kessarisotypies}).

\item\label{lin} Let $\lambda\in\Irr(G)$ with $\lambda(1)=1$. Then the characters $\{\lambda\chi:\chi\in\Irr(B)\}\subseteq\Irr(G)$ form a block $\lambda B$ and the map $\Irr(B)\to\Irr(\lambda B)$, $\chi\mapsto\lambda\chi$ induces a perfect isometry. This shows that perfect isometries do not commute with (Galois) automorphisms (consider the cyclic group $G\cong C_3$ with $p=3$ for instance). Moreover, if $\Irr(B)$ contains a linear character, then $B$ is perfectly isometric to the principal block of $G$.

\item The natural epimorphism $G\to G/\Ker(B)$ induces a perfect isometry between $B$ and the dominated block $\overline{B}$ of $G/\Ker(B)$ (see \cite[p. 198]{Navarro} for a definition). 

\item Let $b$ be a block of $N\unlhd G$ with inertial group $T\le G$ and let $B$ be a block of $T$ covering $b$. Then the Fong-Reynolds correspondence $\Irr(B)\to\Irr(B^G)$, $\chi\mapsto\chi^G$ induces a perfect isometry. Indeed, for $g\in T$ and $h\in G$ we have
\[\sum_{\chi\in\Irr(B)}\chi(g)\chi^G(h)=\sum_{i=1}^s\frac{\lvert\C_G(h)\rvert}{\lvert\C_T(h_i)\rvert}\sum_{\chi\in\Irr(B)}\chi(g)\chi(h_i)=\sum_{i=1}^s\frac{\lvert\C_G(h)\rvert}{\lvert\C_T(h_i)\rvert}\mu_{\id}(g,h_i)\]
where $h_1,\ldots,h_s$ represent the $T$-classes contained in the $G$-class of $h$ and $\id$ is the identical perfect isometry on $\ZZ\Irr(B)$. Similar results hold for the Glauberman correspondence~\cite{HorimotoGL,WatanabeGL}, the Isaacs correspondence~\cite{Sanus}, the Dade correspondence~\cite{TasakaDade,WatanabeD}, Shintani descent~\cite{KessarShintani} and so on.

\item Enguehard~\cite{EnguehardSn} showed that two $p$-blocks of (possibly different) symmetric groups are perfectly isometric whenever they have the same weight. A similar statement for alternating groups was proved in Brunat--Gramain~\cite{BrunatGramain}. 

\item\label{nil} If $B$ is nilpotent with defect group $D$, then $B$ is perfectly isometric to the principal block of $D$ via the Broué--Puig~\cite{BrouePuig} $*$-construction $\Irr(D)\to\Irr(B)$, $\lambda\mapsto\lambda*\chi$ where $\chi\in\Irr(B)$ is a fixed irreducible character of height $0$ (see Section~\ref{sec4}).

\item According to Broué~\cite{BroueEqui}, there are stronger equivalences:
\[\textnormal{Morita equivalence }\Longrightarrow\textnormal{ derived equivalence }\Longrightarrow\textnormal{ perfect isometry}\]
\end{enumerate}
\end{Ex}

\section{Choice of signs}

In the following we denote the ring of algebraic integers in $\CC$ by $\mathbf{R}$. Let $M$ be a maximal ideal of $\mathbf{R}$ containing $p\mathbf{R}$. Then $F:=\mathbf{R}/M$ is an algebraically closed field of characteristic $p$ (see \cite[Lemma~2.1]{Navarro}). Let $\mathcal{O}:=\{r/s:r\in\mathbf{R},\,s\in\mathbf{R}\setminus M\}$. Then $\mathcal{O}/\J(\mathcal{O})\cong F$ and we denote the natural epimorphism by $^*:\mathcal{O}\to F$ (here we differ from \cite[p. 16]{Navarro} where this ring is denoted by $S$). Note that (int) states that $\mu_I(g,h)/\lvert\C_G(g)\rvert,\mu_I(g,h)/\lvert\C_H(h)\rvert\in \mathcal{O}$.

The following lemma is usually not covered in text books. For the convenience of the reader we provide a proof.

\begin{Lem}[Osima~{\cite[Theorem~3]{Osima}}]\label{osima}
Let $J\subseteq\Irr(G)$ such that
\[\sum_{\chi\in J}{\chi(g)\chi(h)}=0\qquad\forall g\in G^0, h\in G\setminus G^0.\]
Then $J$ is a union of blocks.
\end{Lem}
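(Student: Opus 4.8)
My plan is to work with the central idempotents of $\ZZ(\FF G)$ (equivalently, with the block idempotents) and show that the hypothesis forces the indicator-type element attached to $J$ to be a sum of block idempotents. First I would recall that the primitive central idempotents of $\CC G$ are the $e_\chi$ for $\chi\in\Irr(G)$, and that a subset $J\subseteq\Irr(G)$ is a union of $p$-blocks precisely when $e_J:=\sum_{\chi\in J}e_\chi$ reduces, under $^*$ applied coefficientwise, to an element of $\ZZ(\FF G)$ — indeed to a sum of block idempotents of $\FF G$. Concretely, $e_J=\frac{1}{|G|}\sum_{g\in G}\bigl(\sum_{\chi\in J}\chi(1)\chi(g^{-1})\bigr)g$, so the question is about the coefficient functions $g\mapsto\sum_{\chi\in J}\chi(1)\chi(g^{-1})$, and more usefully about the values $\omega_\chi(\widehat{K})=\frac{|K|\chi(g_K)}{\chi(1)}$ of the central characters on class sums $\widehat{K}$.

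The key step is the following: by Navarro~\cite[Theorem~3.?]{Navarro} (the standard criterion via central characters), $\chi$ and $\psi$ lie in the same $p$-block iff $\omega_\chi(\widehat{K})^*=\omega_\psi(\widehat{K})^*$ in $F$ for every conjugacy class $K$; moreover for a $p$-singular class $K$ one has $\omega_\chi(\widehat{K})^*=0$. So it suffices to show that the function $\chi\mapsto(\omega_\chi(\widehat{K})^*)_K$ is constant on $J$... but that is false in general, so instead I take the correct reformulation: $J$ is a union of blocks iff the central function $\lambda_J:\ZZ(\FF G)\to F$ obtained by summing the block algebra homomorphisms over the blocks meeting $J$ is well-defined, equivalently iff for every $g\in G$ the "block-sum column orthogonality" holds. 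The clean route is: consider $f:=\sum_{\chi\in J}\chi\overline{\chi}$, a generalized character of $G\times G$ supported (as a virtual character) on $\Irr(G)\times\Irr(G)$; the hypothesis says $f(g,h)=0$ when $g$ is $p$-regular and $h$ is $p$-singular (and, taking complex conjugates / swapping, when $g$ is $p$-singular and $h$ is $p$-regular). Now fix a $p$-regular $g$ and expand $h\mapsto f(g,h)=\sum_{\chi\in J}\chi(g)\overline{\chi}(h)$: since this class function of $h$ vanishes on $p$-singular elements, it lies in $\ZZ\langle\Phi_\phi:\phi\in\IBr(G)\rangle_{\CC}$ by Navarro~\cite[Theorem~2.13]{Navarro}, hence $\sum_{\chi\in J}\chi(g)\chi\in\CC\text{-span of }\{\Phi_\phi\}$ for each $p$-regular $g$; dually, varying $g$ over a basis of the space of class functions vanishing on $p$-singular elements, this says the projection of $\sum_{\chi\in J}\chi\otimes\chi$ onto $\CF(G)\otimes\CF(G)$ respects the block decomposition in the first variable, and by the symmetric argument also in the second.

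From there I would conclude as follows. The block decomposition of $\CF(G)$ — i.e. the partition of $\Irr(G)$ into blocks — is exactly recovered by the two-sided ideal decomposition of $\ZZ(\FF G)$; concretely, two idempotents $e_\chi^*,e_\psi^*$ (reductions of the integral central idempotents, which need NOT lie in $\ZZ(\FF G)$ individually) have the property that $\chi\sim\psi$ iff they contribute to the same primitive idempotent of $\ZZ(\FF G)$. The computation in the previous paragraph shows $e_J=\sum_{\chi\in J}e_\chi$ has the feature that its reduction $e_J^*$ is fixed by the natural action and actually lies in $\ZZ(\FF G)$: one checks that the coefficient of $g$ in $e_J$, namely $\frac{1}{|G|}\sum_{\chi\in J}\chi(1)\chi(g^{-1})$, reduces into $\mathcal O$ and that $e_J^*$ is central and idempotent in $\FF G$, using precisely the vanishing hypothesis to kill the "cross terms" between $p$-regular and $p$-singular parts of $g=g_pg_{p'}$ via a generalized decomposition number argument parallel to the computation of $\mu_{\id}$ in the proof of \autoref{equiv}. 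Being a central idempotent of $\FF G$, $e_J^*$ is a sum of block idempotents, and since the block idempotents of $\FF G$ lift to the $e_{B_i}=\sum_{\chi\in B_i}e_\chi$, we get $e_J=\sum_i e_{B_i}$, i.e. $J=\bigcup_i B_i$.

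The main obstacle is the bookkeeping in the middle step: showing that the vanishing of $\sum_{\chi\in J}\chi(g)\chi(h)$ on the mixed (regular, singular) pairs is strong enough to force $e_J^*\in\ZZ(\FF G)$. The natural tool is to fix a $p$-element $x$, write $g=xg_{p'}$ and $h$ with $h_p=x^{-1}$ (block orthogonality makes all other terms vanish), and run the generalized-decomposition-number expansion $\chi(g)=\sum_\phi d^x_{\chi\phi}\phi(g_{p'})$ exactly as in the proof of \autoref{equiv}, reducing the claim to a statement about the matrices $(d^x_{\chi\phi})_{\chi\in J}$ being "block-diagonal" — which is where the hypothesis, now read for the subgroup $\C_G(x)$ after a standard reduction, does the work. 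I would present this as the crux and keep the idempotent-lifting epilogue brief.
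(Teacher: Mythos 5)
Your overall strategy is the right one and matches the paper's: form the idempotent $e_J=\sum_{\chi\in J}e_\chi$, show it lies in $\Z(\mathcal{O}G)$, and invoke the fact that central idempotents of $\Z(\mathcal{O}G)$ are sums of block idempotents. You also correctly identify the first technical step: for $p$-regular $g$, the class function $\sum_{\chi\in J}\chi(g)\chi$ vanishes on $p$-singular elements by hypothesis and therefore lies in the $\CC$-span of $\{\Phi_\phi:\phi\in\IBr(G)\}$ by \cite[Theorem~2.13]{Navarro}. This is exactly how the paper begins.

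However, the step you flag as ``the crux'' is where you reach for the wrong tool. You propose to establish integrality of the coefficients of $e_J$ ``via a generalized decomposition number argument parallel to the computation of $\mu_{\id}$ in the proof of \autoref{equiv}.'' That computation cannot be transplanted here: in $\mu_{\id}$ the key cancellation is the Cartan-matrix identity $\sum_{\chi\in\Irr(B)}d^x_{\chi\phi}\overline{d^x_{\chi\mu}}=c_{\phi\mu}$, which holds because the sum runs over a \emph{full} block. In Osima's lemma the sum runs over an \emph{arbitrary} subset $J\subseteq\Irr(G)$, which is precisely what is not yet known to be a union of blocks. So $\sum_{\chi\in J}d_{\chi\phi}d_{\chi\mu}$ is not a Cartan number, and the argument stalls. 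Moreover, you are targeting the coefficient of a $p$-regular element $g$, so $g_p=1$ and the generalized decomposition apparatus (and the reduction to $\C_G(x)$) degenerates to the ordinary case — there is nothing there to help you.

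The paper's route around this is much shorter and avoids the issue entirely. Having written $\sum_{\chi\in J}\chi(g)\chi=\sum_{\phi\in\IBr(G)}a^g_\phi\Phi_\phi$, one computes the coefficients directly: $a^g_\phi=[\,\sum_{\chi\in J}\chi(g)\chi,\phi\,]^0=\sum_{\chi\in J}\chi(g)[\chi,\hat\phi]$, which is an algebraic integer by \cite[Lemma~2.15]{Navarro}. Combined with the fact that $|G|_p$ divides $\Phi_\phi(1)$ (\cite[Corollary~2.14]{Navarro}) and the vanishing of the coefficient of $g\notin G^0$ (take $g=1$ in the hypothesis), this already shows $e_J\in\Z(\mathcal{O}G)$, and \cite[Theorem~3.9]{Navarro} finishes. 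No decomposition of $g$ into $p$-part and $p'$-part, no Cartan identities, no reduction to centralizers. Your idempotent-lifting epilogue is fine, but as written your middle step would not go through; replace it with the direct integrality computation of the coefficients $a^g_\phi$.
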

\begin{proof}
We fix $g\in G^0$. Then, by \cite[Theorem~2.13]{Navarro}, there are complex numbers $a_\phi^g$ such that
\[\sum_{\chi\in J}{\chi(g)\chi}=\sum_{\phi\in\IBr(G)}{a_\phi^g\Phi_\phi}.\]
By \cite[Corollary~2.14]{Navarro}, $\Phi_\phi(1)$ is divisible by $|G|_p$ for every $\phi\in\IBr(G)$.
Moreover, \cite[Lemma~2.15]{Navarro} implies that 
\[a_\phi^g=\Bigl[\sum_{\mu\in\IBr(G)}{a_\mu^g\Phi_\mu},\phi\Bigr]^0=\Bigl[\sum_{\chi\in J}{\chi(g)\chi},\phi\Bigr]^0=\sum_{\chi\in J}{\chi(g)[\chi,\hat{\phi}]}\in\mathbf{R}.\]
We conclude that
\[\sum_{\chi\in J}{e_\chi}=\frac{1}{|G|}\sum_{\chi\in J}\chi(1)\sum_{g\in G}{\chi(g^{-1})g}=\sum_{g\in G^0}\Bigl(\sum_{\chi\in J}\frac{\chi(1)\chi(g^{-1})}{|G|}\Bigr)g=\sum_{g\in G^0}\Bigl(\sum_{\phi\in\IBr(G)}\frac{a_\phi^{g^{-1}}\Phi_\phi(1)}{|G|}\Bigr)g\in \Z(\mathcal{O}G).\]
Now the claim follows from \cite[Theorem~3.9]{Navarro}.
\end{proof}

The following is taken from \cite[Lemma~3.2.3]{Ruengrot}.

\begin{Prop}\label{sign}
If $I,J:\ZZ\Irr(B)\to\ZZ\Irr(B')$ are perfect isometries such that $I(\chi)=\pm J(\chi)$ for all $\chi\in\Irr(B)$, then 
$I=\pm J$.
\end{Prop}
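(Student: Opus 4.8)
The plan is to reduce to a self perfect isometry and then invoke Osima's lemma (\autoref{osima}). By \autoref{equiv} the inverse and composition of perfect isometries are perfect, so $K:=J^{-1}\circ I\colon\ZZ\Irr(B)\to\ZZ\Irr(B)$ is a perfect isometry. Applying the linear map $J^{-1}$ to the hypothesis $I(\chi)=\pm J(\chi)$ shows that $K(\chi)=\epsilon(\chi)\chi$ for signs $\epsilon(\chi)\in\{\pm1\}$ and all $\chi\in\Irr(B)$. Hence it suffices to prove that $\epsilon$ is constant: then $K=\pm\id$ and therefore $I=J\circ K=\pm J$.

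To show $\epsilon$ is constant, set $J_+:=\{\chi\in\Irr(B):\epsilon(\chi)=1\}$. From the definitions one gets $\mu_K(g,h)=\sum_{\chi\in\Irr(B)}\epsilon(\chi)\chi(g)\chi(h)$ and $\mu_{\id}(g,h)=\sum_{\chi\in\Irr(B)}\chi(g)\chi(h)$ for the identical perfect isometry on $\ZZ\Irr(B)$, so that
\[
\sum_{\chi\in J_+}\chi(g)\chi(h)=\tfrac12\bigl(\mu_{\id}(g,h)+\mu_K(g,h)\bigr)\qquad(g,h\in G).
\]
Now let $g\in G^0$ and $h\in G\setminus G^0$. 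Since both $\id$ and $K$ are perfect, condition (sep) gives $\mu_{\id}(g,h)=\mu_K(g,h)=0$, whence $\sum_{\chi\in J_+}\chi(g)\chi(h)=0$ for all such $g$ and $h$.

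By \autoref{osima} the set $J_+$ is then a union of $p$-blocks of $G$. As $J_+\subseteq\Irr(B)$ and $\Irr(B)$ is a single block, this forces $J_+=\emptyset$ or $J_+=\Irr(B)$, i.e.\ $\epsilon\equiv-1$ or $\epsilon\equiv1$. In the former case $K=-\id$ and $I=-J$; in the latter $K=\id$ and $I=J$. The only step that takes a moment's thought is the reduction to $K$: one must use \autoref{equiv} to know that $K=J^{-1}I$ is again a perfect isometry, so that it obeys (sep). Working directly with the mixed sums $\sum_{\chi}\chi(g)J(\chi)(h)$ would be messier, since the signs $\epsilon_J(\chi)$ attached to the $J(\chi)$ obscure the block structure needed for Osima's lemma.
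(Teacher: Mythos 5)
Your proof is correct and follows essentially the same route as the paper: reduce to the self-isometry $K=J^{-1}I$, form the set of characters with positive sign, deduce that the relevant partial sums vanish on mixed ($p$-regular, $p$-singular) pairs, and invoke \autoref{osima}. The only cosmetic difference is that the paper cites block orthogonality directly to get $\mu_{\id}(g,h)=0$, while you observe that $\id$ is itself a perfect isometry and apply (sep) — which amounts to the same thing, since the proof of \autoref{equiv} derives (sep) for $\id$ from block orthogonality.
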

\begin{proof}
By \autoref{equiv}, $J^{-1}I:\ZZ\Irr(B)\to\ZZ\Irr(B)$ is a perfect isometry sending $\chi\in\Irr(B)$ to $\pm\chi$. Let $S^+:=\{\chi\in\Irr(B):J^{-1}I(\chi)=\chi\}$. If $g\in G$ is $p$-regular and $h\in G$ is $p$-singular, then 
\[\sum_{\chi\in S^+}\chi(g)\chi(h)-\sum_{\chi\in\Irr(B)\setminus S^+}\chi(g)\chi(h)=\sum_{\chi\in\Irr(B)}\chi(g)J^{-1}I(\chi)(h)=0=\sum_{\chi\in\Irr(B)}\chi(g)\chi(h)\]
by (sep) and \cite[Corollary~3.7]{Navarro}. Hence, $\sum_{\chi\in S^+}\chi(g)\chi(h)=0$. Now \autoref{osima} implies $S^+\in\{\varnothing,\Irr(B)\}$ and the claim follows.
\end{proof}

\begin{Cor}
The perfect isometries $I:\ZZ\Irr(B)\to\ZZ\Irr(B)$ form a group $\PI(B)$ such that 
\[\PI(B)/\langle -\id\rangle\le\Sym(\Irr(B)).\]
\end{Cor}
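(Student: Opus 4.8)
The plan is to verify the two group-theoretic claims in turn: first that the set of self perfect isometries of $B$ is closed under composition and inverses, and second that the map sending a perfect isometry to its underlying permutation of $\Irr(B)$ descends to an injective homomorphism modulo $\langle-\id\rangle$.

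First I would note that all the hard work is already done. By \autoref{equiv} (applied with $H=K=G$ and all three blocks equal to $B$), the composition of two perfect isometries $\ZZ\Irr(B)\to\ZZ\Irr(B)$ is again perfect, and the inverse of a perfect isometry is perfect; moreover the identity on $\ZZ\Irr(B)$ is perfect. Since perfect isometries are in particular isometries of $\ZZ\Irr(B)$, and composition of maps is associative, this shows at once that $\PI(B)$ is a subgroup of the orthogonal group of $\ZZ\Irr(B)$, hence a group.

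Next I would consider the map $\Phi\colon\PI(B)\to\Sym(\Irr(B))$, $I\mapsto\widehat I$, where $I(\chi)=\epsilon_I(\chi)\widehat I(\chi)$ as in the notation fixed before \autoref{sign}; this is well defined because $\Irr(B)$ is an orthonormal basis, so $I(\chi)\in\pm\Irr(B')$ forces a unique choice of $\widehat I(\chi)\in\Irr(B)$ and sign $\epsilon_I(\chi)$. It is routine that $\Phi$ is a homomorphism: for $I,J\in\PI(B)$ one computes $JI(\chi)=J(\epsilon_I(\chi)\widehat I(\chi))=\epsilon_I(\chi)\epsilon_J(\widehat I(\chi))\widehat J(\widehat I(\chi))$, so $\widehat{JI}=\widehat J\circ\widehat I$. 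The kernel of $\Phi$ consists of those $I$ with $\widehat I=\id$, i.e.\ $I(\chi)=\pm\chi$ for all $\chi$; by \autoref{sign} (with $J=\id$) any such $I$ satisfies $I=\pm\id$, so $\Ker\Phi=\langle-\id\rangle$. The first isomorphism theorem then gives the embedding $\PI(B)/\langle-\id\rangle\hookrightarrow\Sym(\Irr(B))$.

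There is essentially no obstacle here: every ingredient — that composites and inverses of perfect isometries are perfect, and that a self perfect isometry fixing every $\Irr(B)$-line up to sign is $\pm\id$ — has already been established in \autoref{equiv} and \autoref{sign}. The only point requiring a (trivial) check is that $-\id$ is genuinely an element of $\PI(B)$, which follows since $-\id$ is an isometry and $\mu_{-\id}=-\mu_{\id}$ inherits (sep) and (int) from $\id$; and that $-\id$ is central, so $\langle-\id\rangle$ is normal, which is automatic as $-\id$ commutes with every linear map.
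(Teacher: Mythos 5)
Your argument is correct and follows the same route as the paper: both invoke \autoref{equiv} for the group structure of $\PI(B)$ and \autoref{sign} to identify the kernel of $I\mapsto\widehat I$ with $\langle-\id\rangle$. You have merely spelled out some small verifications (that $I\mapsto\widehat I$ is a homomorphism and that $-\id\in\PI(B)$) which the paper leaves implicit.
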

\begin{proof}
By \autoref{equiv}, $\PI(B)$ is a group with respect to composition of maps.
For $I\in\PI(B)$ let $\widehat{I}:\Irr(B)\to\Irr(B)$ such that $\widehat{I}(\chi)=\pm I(\chi)$ for $\chi\in\Irr(B)$. Then the map $\PI(B)\to\Sym(\Irr(B))$, $I\mapsto\widehat{I}$ is a group homomorphism with kernel $\langle-\id\rangle$ by \autoref{sign} (note that $-\id$ is indeed a perfect isometry).
\end{proof}

\begin{Ex}\label{exS3}
Not every perfect isometry has a \emph{uniform} sign (in the sense that $I(\Irr(B))=\Irr(B')$ or $I(\Irr(B))=-\Irr(B')$): Let $B$ be the principal $3$-block of the symmetric group $G=S_3$. The character table is given by
\[\begin{array}{c|ccc}
B&1&(12)&(123)\\\hline
\chi_1&1&1&1\\
\chi_2&1&-1&1\\
\chi_3&2&.&-1
\end{array}.\]
Hence, the map $\chi_1\mapsto\chi_1$, $\chi_2\mapsto-\chi_3$ and $\chi_3\mapsto-\chi_2$ induces a perfect isometry and the sign is not uniform. It is easy to see that $\PI(B)$ is isomorphic to the dihedral group $D_{12}$ of order $12$.
\end{Ex}

We take the opportunity to determine the $\PI(B)$ for blocks with cyclic defect groups in general. This confirms a conjecture made in \cite[Conjecture~6.0.6]{Ruengrot} (the easy but exceptional cases $e=1$ and $e=|D|-1$ are settled in \cite[Theorem~6.0.5]{Ruengrot}, see also \autoref{abel} below).

\begin{Thm}
Let $B$ be a block with cyclic defect group $D$ and inertial index $e$ such that $1<e<|D|-1$. Then $\PI(B)=\langle-\id\rangle\times S_e\times C_{\phi(|D|)/e}$ where $S_e$ permutes the non-exceptional characters and $C_{\phi(|D|)/e}$ permutes the exceptional characters of $B$ ($\phi$ denotes Euler's totient function).
\end{Thm}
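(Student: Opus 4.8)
The plan is to use the well-understood character theory of blocks with cyclic defect groups (Brauer tree theory) to pin down both the structure of $\PI(B)$ and the invariants that any perfect isometry must preserve. Recall that $B$ has $e$ non-exceptional characters $\chi_1,\dots,\chi_e$ together with $(|D|-1)/e$ exceptional characters $\chi_{\lambda}$ which all have a common restriction behaviour; on $p$-singular sections the exceptional characters agree up to the action of $\Gal(\QQ_{|D|}\mid\QQ)$, while on $p$-regular classes all of them take the same value. First I would record the precise character values: for a generator $u$ of $D$ and $s\in\C_G(u)^0$, the numbers $\chi_i(us)$ are $\pm$ signs times Brauer characters dictated by the Brauer tree, and $\chi_\lambda(us)=\zeta^{?}$-type expressions, whereas on $G^0$ one has $d_{\chi_i\phi}=\pm 1$ according to the tree and $d_{\chi_\lambda\phi}$ independent of $\lambda$.

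Second, I would show the three asserted direct factors really are realized by perfect isometries. The factor $\langle-\id\rangle$ is automatic. For $S_e$: the subgroup of $\Sym(\Irr(B))$ generated by perfect isometries permuting the non-exceptional characters among themselves (fixing the set of exceptionals pointwise or as a block) should be realized by the symmetries of the Brauer tree, but in fact more is true — since the exceptional vertex is distinguished, \emph{any} permutation of the $e$ non-exceptional characters can be realized by an isometry $I$ with suitable signs $\epsilon_I$, and the verification that such $I$ is perfect reduces, via the factorization $\mu_I(g,h)=\sum_\chi \chi(g)I(\chi)(h)$ and the above character formulas, to the perfectness of $\id$ (\autoref{equiv}). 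For $C_{\phi(|D|)/e}$: the Galois group $\Gal(\QQ_{|D|}\mid\QQ)$ acts on the exceptional characters through a quotient, the stabilizer of the whole set having index $\phi(|D|)/e$ wait — more carefully, the inertial quotient of order $e$ acts on $D\setminus\{1\}$ and hence the $(|D|-1)/e$ exceptional characters are permuted transitively by a cyclic group of order... one must be slightly careful: the exceptional characters are indexed by the orbits of the inertial group on the nontrivial characters of $D$, and the Galois group permutes these orbits; I would identify the image as cyclic of order $\phi(|D|)/e$ (this uses that the inertial group, being cyclic of order $e$ dividing $p-1$, acts by a subgroup of $(\ZZ/|D|\ZZ)^\times$, so the orbit space carries a faithful action of the quotient group, which is cyclic since $(\ZZ/p^n\ZZ)^\times$ is cyclic for odd $p$ — and $p$ must be odd here since $e>1$). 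Each such Galois automorphism is a perfect isometry by \autoref{ex1}.

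Third — and this is the main obstacle — I must prove there is \emph{nothing more}, i.e. $\PI(B)/\langle-\id\rangle$ injects into $S_e\times C_{\phi(|D|)/e}$. The key structural fact is that any perfect isometry $I$ must send non-exceptional characters to $\pm$non-exceptional characters and exceptional to $\pm$exceptional, because these two families are distinguished by an invariant preserved under perfect isometry — namely the common multiplicity / the behaviour of generalized decomposition numbers $d^u_{\chi\phi}$, or more concretely the set of exceptional characters is characterized as the unique family of $(|D|-1)/e$ characters that become equal after restriction to $G^0$ (this ``heights/decomposition'' information is transported by \autoref{broue} via condition (sep) together with the identity $\mu_I(g^{-1},h)$ recovering $I$). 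So $\widehat I$ restricts to a permutation of the $e$ non-exceptionals and a permutation of the $(|D|-1)/e$ exceptionals. The permutation on the non-exceptionals is unconstrained (giving all of $S_e$ as shown), but the permutation on the exceptional characters is severely constrained: since all exceptional $\chi_\lambda$ agree on $G^0$, the value $\mu_I$ on a $p$-regular $g$ together with a $p$-singular $h$ must vanish, and working out what $I$ does on the $p$-singular section forces $\widehat I$ to act on the exceptional characters exactly the way some element of $\Gal(\QQ_{|D|}\mid\QQ)$ does — because the exceptional characters are distinguished from one another \emph{only} by Galois-conjugate values of roots of unity on $p$-singular elements, and (int) plus (sep) leave no other freedom. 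Concretely I would argue: fix the $p$-singular section at a generator $u\in D$; the matrix $\bigl(d^u_{\chi_\lambda,\phi}\bigr)$ is, up to a global sign, a permutation of $|D|$-th roots of unity, and a perfect isometry must permute its rows in a way compatible with a ring automorphism of $\ZZ[\zeta_{|D|}]$; this pins down the action as Galois and gives the index-$e$ cyclic group $C_{\phi(|D|)/e}$. Finally I would check the product is direct: the three factors act on disjoint pieces of data ($\langle-\id\rangle$ on signs, $S_e$ on non-exceptionals, $C_{\phi(|D|)/e}$ on exceptionals) and each individually normalizes — in fact centralizes — the others, so $\PI(B)=\langle-\id\rangle\times S_e\times C_{\phi(|D|)/e}$ as claimed.
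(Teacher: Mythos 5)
Your overall plan matches the paper's: reduce to understanding what a perfect isometry can do on the exceptional and non-exceptional families separately, realize the three direct factors explicitly, and then show nothing else is possible. However, the proposal has a genuine gap at exactly the hardest step.

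The critical unproved claim is your assertion that a perfect isometry ``must permute \emph{[the rows of the generalized decomposition matrix at a generator of $D$]} in a way compatible with a ring automorphism of $\ZZ[\zeta_{|D|}]$; this pins down the action as Galois.'' Nothing in the definition of a perfect isometry says the map extends to a ring automorphism of $\ZZ[\zeta_{|D|}]$, and a priori one could imagine an exotic permutation of the exceptional characters that happens to satisfy (sep) and (int) without coming from $\Aut(D)$ or $\Gal(\QQ_{|D|}|\QQ)$. The paper does the real work here: after reducing to $G=D\rtimes E$ (via Broué's Théorème 5.3 and Külshammer's reduction for normal defect group — a reduction you do not carry out, and which you would need to make your character formulas clean) and normalizing so that $I$ fixes $\Irr(E)$ pointwise with positive signs, it considers the column vector $v=(I(\chi)(g))_\chi$ for a fixed generator $g$ of $D$, expands $v=\sum\alpha_i u_i$ in the columns $u_i$ of the character table, and then combines three ingredients: (sep) kills the $p$-regular columns; second orthogonality forces $\sum|\alpha_i|^2=1$; and (int) forces each $\alpha_i=(v,u_i)/|D|$ to be an algebraic integer. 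Applying all Galois automorphisms of the abelian field $\QQ_{|G|}$ gives $\sum|\alpha_i^\sigma|^2=1$ for every $\sigma$, hence $|\alpha_i^\sigma|\le1$, and the rationality of the norm $\prod_\sigma|\alpha_i^\sigma|$ forces exactly one $\alpha_i$ to be a root of unity and the rest to vanish; comparing first entries then gives $\alpha_i=1$. That is the mechanism by which (int) ``leaves no other freedom,'' and it is not a routine verification — it is the heart of the theorem. Your proposal gestures at the conclusion without supplying this argument.

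Two smaller omissions: you do not address why (after normalizing) the signs must be uniformly positive on the exceptional characters — the paper does this with the explicit computation $\mu_I(1,1)=e(2-|D|)\not\equiv 0\pmod{|D|}$ under a mixed-sign assumption, using (int) at $(1,1)$; and you claim $S_e$ is realized by isometries ``with suitable signs,'' whereas in fact the identity signs already work, by the second-orthogonality computation $\mu_I(g,h)=\sum_{\chi\in\Irr(G)}\chi(g)\chi(h)\equiv 0\pmod{|D|}$ on pairs of $p$-singular elements in the normalized model $G=D\rtimes E$.
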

\begin{proof}
By \cite[Théor{\`e}me~5.3]{Broue}, there exists a perfect isometry between $B$ and its Brauer correspondent in $\N_G(D)$ sending exceptional characters to exceptional characters. So we may assume that $D\unlhd G$. It is well-known that the inertial quotient $E$ of $B$ is a $p'$-subgroup of $\Aut(D)$. In particular, $E$ is cyclic of order $e$ dividing $p-1$. Since $e>1$, we conclude that $p$ is odd. By a result of Külshammer~\cite{Kuelshammer} (see \cite[Theorem~1.19]{habil}), we may assume that $B$ is the only block of $G:=D\rtimes E$.
Moreover, $G$ is a Frobenius group and $\Irr(G)=\Irr(E)\cup\{\psi_1^G,\ldots,\psi_t^G\}$ where $t=(|D|-1)/e$ and $\psi_1,\ldots,\psi_t$ is a set of representatives of the $E$-orbits on $\Irr(D)\setminus\{1_D\}$. Since $\Aut(D)$ is cyclic of order $\phi(|D|)$, there exists a unique (cyclic) subgroup $A\le\Aut(D)$ of order $\phi(|D|)/e$. Then $A$ acts on $G$ and permutes $\psi_1^G,\ldots,\psi_t^G$ faithfully. Hence by \autoref{ex1}\eqref{ex1a}, $A$ induces a subgroup of $\PI(B)$ which acts trivially on $\Irr(E)$.  

Now we show that every $I\in\Sym(\Irr(E))\subseteq\Sym(\Irr(G))$ induces a perfect isometry. Observe that $D\setminus\{1\}$ is the set of $p$-singular elements of $G$. Let $g\in G$ be $p$-singular and $h\in G$ $p$-regular. Then $\chi(g)=1$ for $\chi\in\Irr(E)$ and $\psi_i^G(h)=0$ for $i=1,\ldots,t$. Hence,
\[\mu_I(g,h)=\sum_{\chi\in\Irr(E)}I(\chi)(h)=\sum_{\chi\in\Irr(E)}\chi(h)=0,\]
i.\,e. (sep) holds. In order to show (int'), let $g,h\in D\setminus\{1\}$. Then
\[\mu_I(g,h)=e+\sum_{i=1}^t\psi_i^G(g)\psi_i^G(h)=\sum_{\chi\in\Irr(G)}\chi(g)\chi(h)\equiv 0\pmod{|D|}\]
by the second orthogonality relation. Hence, (int') holds and $I$ is a perfect isometry. Consequently, 
\[\langle-\id\rangle\times S_e\times C_{\phi(|D|)/e}\le\PI(B).\] 

Now let $I\in\PI(B)$ be arbitrary. Since $e<|D|-1$, we may choose $1\le i<j\le t$. The generalized character $\psi_i^G-\psi_j^G$ vanishes on the $p$-regular elements of $G$. By \eqref{alter}, $I(\psi_i^G)-I(\psi_j^G)$ also vanishes on the $p$-regular elements. Since $I(\psi_i^G)\ne I(\psi_j^G)$, it follows easily that 
\[\{I(\psi_1^G),\ldots,I(\psi_t^G)\}=\pm\{\psi_1^G,\ldots,\psi_t^G\}.\]
Consequently, $I(\chi)\in\pm\Irr(E)$ for every $\chi\in\Irr(E)$.
Since $e>1$, we may choose distinct $\chi,\psi\in\Irr(E)$. Then $\chi-\psi$ and $I(\chi)-I(\psi)$ vanish on the $p$-singular elements and we obtain $I(\Irr(E))=\pm\Irr(E)$. By the first part of the proof, we may assume that $I(\chi)=\chi$ for every $\chi\in\Irr(E)$. 
Suppose that $I$ has a negative sign on the characters $\psi_i^G$. Then
\[\mu_I(1,1)=e-\sum_{i=1}^te^2=e(1-te)=e(2-|D|)\]
is not divisible by $|D|$ and this contradicts (int). Hence, $I$ has a uniform positive sign.

We consider the column vector $v:=(I(\chi)(g):\chi\in\Irr(G))$ for a fixed generator $g$ of $D$. 
Recall that all character values lie in the cyclotomic field $\QQ_{|G|}$.
By linear algebra over that field, we may write $v$ as a linear combination $v=\alpha_1u_1+\ldots+\alpha_nu_n$ where $\alpha_1,\ldots,\alpha_n\in\QQ_{|G|}$ and $u_1,\ldots,u_n$ are columns of the character table of $G$. 
By the second orthogonality relation, we have
\[\alpha_i(u_i,u_i)=(v,u_i)\]
for $i=1,\ldots,n$ where $(u_i,u_i)$ denotes the usual inner product. 
If some $u_i$ corresponds to a $p$-regular element, then (sep) implies $\alpha_i=(v,u_i)=0$. Hence, we may assume that $u_1,\ldots,u_n$ correspond to $p$-singular elements. 
Again by the second orthogonality relation, we obtain
\[|D|=\lvert\C_G(g)\rvert=(v,v)=\sum_{i=1}^n|\alpha_i|^2(u_i,u_i)=|D|\sum_{i=1}^n|\alpha_i|^2\]
and $|\alpha_1|^2+\ldots+|\alpha_n|^2=1$.
Moreover, (int) implies that
\[\alpha_i=\frac{1}{|D|}(v,u_i)\]
is an algebraic integer for $i=1,\ldots,n$. Since $\QQ_{|G|}$ is an abelian number field, we also get
\[|\alpha_1^\sigma|^2+\ldots+|\alpha_n^\sigma|^2=1\]
for every Galois automorphism $\sigma$ of $\QQ_{|G|}$. In particular, $|\alpha_i^\sigma|\le1$. By Galois theory, the product $\prod_\sigma|\alpha_i^\sigma|$ is a rational integer and we conclude that $|\alpha_i|=1$ for some $i$ and $\alpha_j=0$ for all $j\ne i$. By comparing the first entry (corresponding to the trivial character) of $v=\alpha_iu_i$, we see that $\alpha_i=1$, i.\,e. $v$ is a column of the character table of $G$. Now it is easy to see that $I$ is induced from the automorphism group $A$ introduced above (note that only $\phi(|D|)/e$ columns of the character table contain a primitive $|D|$-th root of unity). Therefore, we have shown that $\PI(B)\le\langle-\id\rangle\times S_e\times C_{\phi(|D|)/e}$.
\end{proof}

\section{Preserved invariants}\label{sec4}

Recall that the \emph{height} $h(\chi)\ge 0$ of $\chi\in\Irr(B)$ is defined by $\chi(1)_p=p^{a-d+h(\chi)}$ where $d$ is the defect of $B$ and $|G|_p=p^a$. Let $\Irr_i(B):=\{\chi\in\Irr(B):h(\chi)=i\}$ and $k_i(B):=\lvert\Irr_i(B)\rvert$.
We show first that the decomposition matrix encodes the character heights.

\begin{Lem}[Brauer~{\cite[5H]{BrauerBlSec2}}]\label{lembrauer}
Let $d$ be the defect and $Q\in\ZZ^{k(B)\times l(B)}$ be the decomposition matrix of $B$. Let 
\[(m_{\chi\psi})_{\chi,\psi\in\Irr(B)}:=p^dQ(Q^\text{t}Q)^{-1}Q^\text{t}\in\ZZ^{k(B)\times k(B)}.\] 
If $\chi\in\Irr_0(B)$ and $\psi\in\Irr_i(B)$, then $(m_{\chi\psi})_p=p^i$.
\end{Lem}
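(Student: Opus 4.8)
The plan is to work with the matrix $C := Q^{\mathrm t}Q\in\ZZ^{l(B)\times l(B)}$, the Cartan matrix of $B$, and exploit the well-known fact that $\det C = p^d$ together with the structure of $C$ modulo higher powers of $p$. Writing $M := p^d Q C^{-1} Q^{\mathrm t}$, I first observe that $M$ is the matrix (in the basis $\Irr(B)$) of $p^d$ times the orthogonal projection of $\ZZ\Irr(B)\otimes\QQ_p$ onto the $\QQ_p$-span of the columns of $Q$, i.e.\ onto the subspace spanned by the projective indecomposable characters $\Phi_\phi$. In particular $M^2 = p^d M$, the entries of $M$ are algebraic integers (in fact rational integers, since $p^d C^{-1}$ has integer entries because $\det C = p^d$ and $C$ is an integer matrix with adjugate in $\ZZ^{l\times l}$), and $M$ is symmetric. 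The rows of $M$ indexed by $\Irr_0(B)$ will play a special role.

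The key input is that for $\chi\in\Irr_0(B)$ the corresponding row $\Phi$-expansion is essentially a single projective character up to a $p$-adic unit. Concretely, the Cartan matrix $C$ has elementary divisors that are powers of $p$, the largest being $p^d$ occurring exactly once (this is Brauer's theory of the Cartan matrix: $C \equiv \text{(something of rank }l-1)\bmod p$ only in the split case, but in general one uses that the $p$-adic valuation of $\det C$ is $d$ and that a height-zero character $\chi$ has $\chi(1)_p = p^{a-d}$ minimal). The row $(m_{\chi\psi})_\psi$ for $\chi\in\Irr_0(B)$ is $p^d$ times the $\chi$-row of $QC^{-1}Q^{\mathrm t}$; using $\chi^0 = \sum_\phi d_{\chi\phi}\phi$ and $[\chi,\Phi_\phi] = d_{\chi\phi}$, one computes $m_{\chi\psi} = \sum_{\phi}d_{\chi\phi}(p^dC^{-1})_{\phi,\cdot}(d_{\psi\cdot})^{\mathrm t}$. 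The diagonal entry $m_{\chi\chi}$ for a height-zero $\chi$ should come out to be a $p$-adic unit times $p^{?}$ — I expect $(m_{\chi\chi})_p = 1$ — and this is the anchor of the whole argument.

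From $M^2 = p^dM$ one gets $\sum_\rho m_{\chi\rho}m_{\rho\psi} = p^d m_{\chi\psi}$ for all $\chi,\psi$. Fixing $\chi\in\Irr_0(B)$ with $(m_{\chi\chi})_p=1$ and $\psi\in\Irr_i(B)$, I want to extract the $p$-part of $m_{\chi\psi}$. The cleanest route is probably to pass to the relation between $M$ and the matrix $p^d Q(Q^{\mathrm t}Q)^{-1}$ restricted to its action on standard basis vectors, and to use the known formula $\chi(1)\psi(1) = $ (up to sign and $p'$-factors) an entry built from $M$ — more precisely, one shows $m_{\chi\psi}\chi(1)/\psi(1)$ or $m_{\chi\psi}\psi(1)/\chi(1)$ is a $p$-adic unit by comparing with the contribution of the defect-$d$ elementary divisor. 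Then $(\psi(1)/\chi(1))_p = p^{(a-d+i)-(a-d)} = p^i$ forces $(m_{\chi\psi})_p = p^i$. The main obstacle is making rigorous the claim that the height-zero row of $M$ "sees only" the $p^d$ elementary divisor of $C$ and that the associated unit statement holds — this is where Brauer's deeper results on the Cartan matrix (the fact that $C$ has a unique elementary divisor equal to $p^d$, attached to a height-zero character) must be invoked cleanly rather than re-proved; once that structural fact is in hand, the valuation bookkeeping via $M^2 = p^d M$ and the height formula $\chi(1)_p = p^{a-d+h(\chi)}$ is routine.
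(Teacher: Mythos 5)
There is a genuine gap. You correctly identify that $M := p^d Q C^{-1}Q^{\mathrm t}$ is $p^d$ times an orthogonal projection, that $M^2 = p^d M$, that $M$ is integral (though note your claim $\det C = p^d$ is not true in general — the determinant of the Cartan matrix is $p^{\sum d_i}$ where $p^{d_1}, \ldots, p^{d_l}$ are the elementary divisors and only the largest, $d_1$, equals $d$; integrality of $p^d C^{-1}$ follows from the largest elementary divisor being $p^d$, not from $\det C = p^d$), and that the height formula $\psi(1)_p = p^{a-d+h(\psi)}$ must enter. But the crucial step — extracting the exact $p$-part $(m_{\chi\psi})_p$ — is left as a heuristic ("I expect", "one shows \ldots by comparing with the contribution of the defect-$d$ elementary divisor"), and you explicitly flag this as the unresolved obstacle. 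The idempotent relation $M^2 = p^d M$ together with the elementary-divisor structure of $C$ does not by itself pin down the $p$-valuation of a single off-diagonal entry $m_{\chi\psi}$; the needed input is a genuinely nontrivial theorem about these "contributions", not a matrix-algebra bookkeeping step.

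The missing ingredient is the identification $m_{\chi\psi} = p^d [\chi,\psi]^0$: since $C^{-1} = \bigl([\phi,\mu]^0\bigr)_{\phi,\mu\in\IBr(B)}$ (\cite[Theorem~2.13]{Navarro}), one has
$\bigl(QC^{-1}Q^{\mathrm t}\bigr)_{\chi\psi} = \sum_{\phi,\mu} d_{\chi\phi}\,[\phi,\mu]^0\, d_{\psi\mu} = [\chi,\psi]^0$.
You write the same double sum but never collapse it to the scalar product on $p$-regular classes. Once this is done, $m_{\chi\psi} = p^d[\chi,\psi]^0 = p^{d-a}[\widetilde{\chi},\psi]$, and the result is an immediate consequence of Brauer's theorem on contributions (\cite[Theorem~3.24]{Navarro}), which says precisely that for $\chi$ of height zero the $p$-part of $[\widetilde{\chi},\psi]$ is $\psi(1)_p$. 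This gives $(m_{\chi\psi})_p = p^{d-a}\psi(1)_p = p^{h(\psi)}$. That theorem is the substantive input that your projection/idempotent framework cannot replace; without citing it (or reproving it) your argument does not close.
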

\begin{proof}
Since $C:=Q^\text{t}Q$ is the Cartan matrix of $B$, \cite[Theorem~3.26]{Navarro} shows that $m_{\chi\psi}\in\ZZ$ for $\chi,\psi\in\Irr(B)$. By \cite[Theorem~2.13]{Navarro}, $C^{-1}=([\phi,\mu]^0)_{\phi,\mu\in\IBr(B)}$. Let $|G|_p=p^a$. Then
\[m_{\chi\psi}=p^d\sum_{\phi,\mu\in\IBr(B)}[\phi,\mu]^0d_{\chi\phi}d_{\psi\mu}=p^d[\chi,\psi]^0=p^{d-a}[\widetilde{\chi},\psi]\]
and \cite[Theorem~3.24]{Navarro} yields $(m_{\chi\psi})_p=p^{d-a}\psi(1)_p=p^{h(\psi)}$.
\end{proof}

\begin{Thm}[{Broué~\cite[Théor{\`e}me~1.5]{Broue}}]\label{decomp}
Let $B$ and $B'$ be perfectly isometric blocks with decomposition matrices $Q$ and $Q'$ respectively. Then there exist $S\in\GL(l(B),\ZZ)$ and a signed permutation matrix $T\in\GL(k(B),\ZZ)$ such that \[QS=TQ'.\] 
In particular, $k_i(B)=k_i(B')$ for $i\ge 0$ and $l(B)=l(B')$. Moreover, the Cartan matrices of $B$ and $B'$ are equivalent as integral quadratic forms. In particular, they have the same elementary divisors counting multiplicities. Finally, $B$ and $B'$ have the same defect.
\end{Thm}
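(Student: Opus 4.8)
The plan is to show that a perfect isometry $I\colon\ZZ\Irr(B)\to\ZZ\Irr(B')$ carries the lattice of virtual projective characters of $B$ onto that of $B'$, and then to extract every assertion from the resulting matrix identity. Let $L(B)\subseteq\ZZ\Irr(B)$ be the $\ZZ$-span of $\{\Phi_\phi:\phi\in\IBr(B)\}$. I will use the standard fact that $L(B)$ is exactly the set of generalized characters of $B$ vanishing on the $p$-singular elements, and that $\{\Phi_\phi:\phi\in\IBr(B)\}$ is a $\ZZ$-basis of it (cf.\ \cite{Navarro}). If $\theta\in L(B)$, then extending \eqref{alter} linearly gives $I(\theta)(h)=\frac1{|G|}\sum_{g\in G}\theta(g)\mu_I(g^{-1},h)$; for $p$-singular $h$ each summand vanishes, by (sep) when $g$ is $p$-regular and because $\theta\in L(B)$ when $g$ is $p$-singular. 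Hence $I(\theta)$ is a generalized character of $B'$ vanishing on the $p$-singular elements, so $I(\theta)\in L(B')$. Applying the same to the perfect isometry $I^{-1}$ (see \autoref{equiv}) yields $I(L(B))=L(B')$; in particular $l(B)=l(B')$.

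Next I pass to matrices. Writing $I(\chi)=\epsilon_I(\chi)\widehat I(\chi)$, the matrix of $I$ in the bases $\Irr(B),\Irr(B')$ is a signed permutation matrix $T\in\GL(k(B),\ZZ)$, and the matrix of $I|_{L(B)}$ in the bases $\{\Phi_\phi\},\{\Phi'_{\phi'}\}$ is some $S\in\GL(l(B),\ZZ)$ by the previous paragraph. Since the columns of $Q$ (resp.\ $Q'$) express the $\Phi_\phi$ (resp.\ $\Phi'_{\phi'}$) in $\Irr(B)$ (resp.\ $\Irr(B')$), comparing the two ways of evaluating $I$ on $\Phi_\phi=\sum_{\chi}d_{\chi\phi}\chi$ gives a relation $TQ=Q'S$, hence $QS^{-1}=T^{-1}Q'$, which has the asserted form with $S^{-1}\in\GL(l(B),\ZZ)$ and the signed permutation matrix $T^{-1}$. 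Because $T^{\text{t}}T$ is the identity, $C:=Q^{\text{t}}Q$ and $C':=Q'^{\text{t}}Q'$ satisfy $S^{\text{t}}CS=(QS)^{\text{t}}(QS)=(TQ')^{\text{t}}(TQ')=C'$, so the Cartan matrices are equivalent as integral quadratic forms; congruent integral matrices have isomorphic cokernels, hence identical elementary divisors counting multiplicities. Taking determinants and invoking the well-known equality $\det C=p^d$ (Brauer) gives $p^d=p^{d'}$, so $B$ and $B'$ have the same defect.

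For the heights I combine this with \autoref{lembrauer}. Put $M:=p^dQ(Q^{\text{t}}Q)^{-1}Q^{\text{t}}$ and $M':=p^dQ'(Q'^{\text{t}}Q')^{-1}Q'^{\text{t}}$, both integral by \autoref{lembrauer} (here $d=d'$). From $TQ=Q'S$ and $S^{\text{t}}CS=C'$ one computes $M'=T^{-1}MT^{-\text{t}}=T^{\text{t}}MT$, i.e.\ $m'_{\widehat I(\chi),\widehat I(\psi)}=\epsilon_I(\chi)\epsilon_I(\psi)\,m_{\chi\psi}$, and in particular $(m'_{\widehat I(\chi),\widehat I(\psi)})_p=(m_{\chi\psi})_p$ for all $\chi,\psi\in\Irr(B)$. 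Fix $\chi_0\in\Irr_0(B)$. By \autoref{lembrauer}, $(m_{\chi_0\chi_0})_p=1$, so $(m'_{\widehat I(\chi_0),\widehat I(\chi_0)})_p=1$; since $p^{h(\rho)}\mid m'_{\rho\rho}$ for every $\rho\in\Irr(B')$ (the divisibility underlying \autoref{lembrauer}; see \cite{Navarro}), this forces $\widehat I(\chi_0)\in\Irr_0(B')$. Applying \autoref{lembrauer} to $B'$ with the height-zero character $\widehat I(\chi_0)$ now gives $p^{h(\widehat I(\psi))}=(m'_{\widehat I(\chi_0),\widehat I(\psi)})_p=(m_{\chi_0\psi})_p=p^{h(\psi)}$ for all $\psi\in\Irr(B)$. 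Hence $\widehat I$ preserves heights and $k_i(B)=k_i(B')$ for all $i\ge0$.

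The main obstacle is the first paragraph: one genuinely needs that $\{\Phi_\phi:\phi\in\IBr(B)\}$ is an \emph{integral} basis of the generalized characters of $B$ vanishing on the $p$-singular elements, so that $S$ comes out integral and invertible; once this is in hand, the rest is linear algebra combined with \autoref{lembrauer}. The only other non-formal input is the divisibility $p^{h(\chi)}\mid m_{\chi\psi}$, which is what lets one conclude that a perfect isometry must match height-zero characters with height-zero characters.
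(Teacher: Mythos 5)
Your argument is essentially the paper's own proof: you show that a perfect isometry carries the lattice of virtual projective characters of $B$ onto that of $B'$, extract a relation of the form $QS=TQ'$ from this, and read off the remaining assertions from the contribution matrix $M=p^dQC^{-1}Q^{\mathrm t}$ together with \autoref{lembrauer}. Your organization of the first step (showing $I(L(B))=L(B')$ by applying the argument to both $I$ and $I^{-1}$, then taking $S$ to be the matrix of $I|_{L(B)}$) is a mild variant of the paper's, which first derives $Q=TQ'S$ with $S$ merely integral and then proves invertibility via the full rank of $Q$; the two are equivalent. Your height argument is a correct and slightly more explicit version of the paper's one-line reduction to \autoref{lembrauer}: the divisibility $p^{h(\rho)}\mid m_{\rho\rho}$ you invoke is the standard Brauer contribution fact needed to identify which row of $M'$ belongs to a height-zero character, and the paper leaves this step to the reader.

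There is, however, one genuine slip. To conclude ``same defect'' you take determinants and cite ``the well-known equality $\det C=p^d$ (Brauer).'' That equality is \emph{false} in general. Brauer's theorem states that the \emph{largest} elementary divisor of the Cartan matrix $C$ is $p^d$ and occurs with multiplicity one; the determinant is the product of \emph{all} elementary divisors and can be strictly larger than $p^d$. For a concrete counterexample, take a block $B_0$ with $l(B_0)\ge2$ and $\det C_{B_0}=p^{d_0}$ (e.g.\ the principal $2$-block of $A_4$, where $l=3$, $d_0=2$, $\det C_{B_0}=4$), and form the block $B_0\otimes B_0$ of $G_0\times G_0$: its Cartan matrix is $C_{B_0}\otimes C_{B_0}$ with determinant $(\det C_{B_0})^{2l(B_0)}=p^{2d_0l(B_0)}$, while its defect is only $2d_0$. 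The fix is immediate given what you have already proved: you established that $C$ and $C'$ have the same elementary divisors with multiplicities, so comparing their \emph{largest} elementary divisors (rather than determinants) gives $p^d=p^{d'}$, which is exactly how the paper concludes.
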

\begin{proof}
Let $I:\ZZ\Irr(B)\to\ZZ\Irr(B')$ be a perfect isometry. By \eqref{alter}, $I(\Phi_\phi)$ is a generalized character of $B'$ which vanishes on the $p$-singular elements of $H$. Hence, $I(\Phi_\phi)=\sum_{\mu\in\IBr(B')}s_{\mu\phi}\Phi_\mu$ with $s_{\mu\phi}\in\ZZ$ (see \cite[Corollary~2.17]{Navarro}). This shows
\[\sum_{\chi\in\Irr(B)}d_{\chi\phi}I(\chi)=I(\Phi_\phi)=\sum_{\mu\in\IBr(B')}s_{\mu\phi}\Phi_\mu=\sum_{\chi\in\Irr(B)}\Bigl(\sum_{\mu\in\IBr(B')}d_{I(\chi)\mu}s_{\mu\phi}\Bigr)I(\chi).\]
Setting $S:=(s_{\mu\phi})\in\ZZ^{l(B')\times l(B)}$ and $T=(\epsilon_I(\chi)\delta_{\widehat{I}(\chi)\psi})\in\ZZ^{k(B)\times k(B')}$ where $I(\chi)=\epsilon_I(\chi)\widehat{I}(\chi)$, it follows that $Q=TQ'S$. 

Since $I^{-1}$ is also a perfect isometry, we get matrices $S'\in\ZZ^{l(B)\times l(B')}$ and $T'\in\ZZ^{k(B')\times k(B)}$ such that $Q'=T'QS'$. In fact, by the definition we see that $T'=T^\text{t}=T^{-1}$. Thus, $Q=TQ'S=TT'QS'S=QS'S$ and $S'=S^{-1}\in\GL(l(B),\ZZ)$, because $Q$ has full rank as is well-known. In particular, $l(B)=l(B')$. In accordance with the statement of the theorem, we replace $S$ by $S^{-1}$. 
Then the Cartan matrices of $B$ and $B'$ are given by $C:=Q^{\text{t}}Q$ and 
\[C':=(Q')^{\text{t}}Q'=(T^{-1}QS)^{\text{t}}T^{-1}QS=S^\text{t}CS,\] 
since $T$ is orthogonal. Hence, $C$ and $C'$ are equivalent as integral quadratic forms. We conclude that $C$ and $C'$ have the same elementary divisors counting multiplicities. In particular, the largest elementary divisors of $C$ and $C'$ coincide and this number is the order of a defect group of $B$ and $B'$. So $B$ and $B'$ have the same defect $d$. 

Finally, the claim $k_i(B)=k_i(B')$ follows from 
\[Q'(C')^{-1}(Q')^{\text{t}}=T^\text{t}QC^{-1}Q^{\text{t}}T\]
and \autoref{lembrauer}.
\end{proof}

\begin{Ex}\label{expgrp}\hfill
\begin{enumerate}[(i)]
\item\label{eins} There exist perfectly isometric blocks with non-isomorphic defect groups: 
Let $G$ and $H$ be any $p$-groups with the same character table (like $D_8$ and $Q_8$). Then there exist bijections
$I:\Irr(G)\to\Irr(H)$ and $\sigma:G\to H$ such that $I(\chi)(\sigma(g))=\chi(g)$ for $\chi\in\Irr(B)$ and $g\in G$. By the second orthogonality relation, $I$ induces a perfect isometry between the principal $p$-blocks of $G$ and $H$ (these are of course the only blocks of $G$ and $H$ respectively). We will see in \autoref{hertweck} that the converse holds as well whenever $G$ and $H$ are $p$-groups.

\item Külshammer--Olsson--Robinson~\cite[Section~1]{KOR} introduced a \emph{generalized perfect isometry} \[I:\ZZ\Irr(B)\to\ZZ\Irr(B')\] 
by requiring only $[\chi,\psi]^0=[I(\chi),I(\psi)]^0$ for all $\chi,\psi\in\Irr(B)$. 
This turns out to be equivalent to $QS=TQ'$ with the notation of \autoref{decomp}. We will see in \autoref{abel} below that not every generalized perfect isometry is a perfect isometry in the sense of \autoref{broue}. Other variations of perfect isometries were given by Narasaki--Uno\cite{NarasakiUno}, Eaton~\cite{EatonPI} and Evseev~\cite{Evseev}.
\end{enumerate}
\end{Ex}

\begin{Thm}[{Broué~\cite[Théor{\`e}me~5.2]{Broue}}]\label{center}
Let $B$ and $B'$ be perfectly isometric blocks. Then the centers $\Z(B)$ and $\Z(B')$ are isomorphic as $\mathcal{O}$-algebras and as $F$-algebras.
\end{Thm}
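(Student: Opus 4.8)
The natural candidate for the isomorphism transports primitive central idempotents along the bijection $\widehat{I}$. Write $b$ and $b'$ for the block idempotents, so that $\Z(B)=\Z(\mathcal{O}Gb)$ and $\Z(B')=\Z(\mathcal{O}Hb')$, and regard these as full $\mathcal{O}$-lattices inside the split semisimple algebras $\Z(\CC G)b=\bigoplus_{\chi\in\Irr(B)}\CC e_\chi$ and $\Z(\CC H)b'=\bigoplus_{\psi\in\Irr(B')}\CC e_\psi$; they are at the same time $\mathcal{O}$-subalgebras. For $\chi\in\Irr(G)$ let $\omega_\chi\colon\Z(\CC G)\to\CC$ be the central character (so $ze_\chi=\omega_\chi(z)e_\chi$), and similarly for $H$. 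Since $k(B)=k(B')$ by \autoref{decomp}, there is a well-defined $\CC$-algebra isomorphism
\[\Psi\colon\Z(\CC G)b\longrightarrow\Z(\CC H)b',\qquad e_\chi\longmapsto e_{\widehat{I}(\chi)}\quad(\chi\in\Irr(B)),\]
equivalently $\omega_{\widehat{I}(\chi)}(\Psi(z))=\omega_\chi(z)$ for all $z$ and all $\chi\in\Irr(B)$. The content of the theorem is that $\Psi$ carries $\Z(B)$ onto $\Z(B')$, and the conditions \textnormal{(sep)} and \textnormal{(int)} in \autoref{broue} are exactly what make this true.

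To prove $\Psi(\Z(B))\subseteq\Z(B')$, let $z\in\Z(B)$. The element $\Psi(z)$ is automatically central in $\CC H$ and supported on $\Irr(B')$, so it lies in $\Z(B')$ as soon as all its coefficients, as an element of $\CC H$, lie in $\mathcal{O}$. The coefficient of $\Psi(z)$ at $h\in H$ is $\tfrac{1}{\lvert H\rvert}\sum_{\chi\in\Irr(B)}\omega_\chi(z)\,\widehat{I}(\chi)(1)\,\widehat{I}(\chi)(h^{-1})$, and since $\widehat{I}(\chi)(1)\widehat{I}(\chi)(h^{-1})=I(\chi)(1)I(\chi)(h^{-1})$ (the sign $\epsilon_I(\chi)$ cancels) one may replace $\widehat{I}$ by $I$ there. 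Now substitute \eqref{alter} for the factors $I(\chi)(\,\cdot\,)$, interchange summations, and use $\sum_{g\in G}z_g\chi(g)=\chi(1)\omega_\chi(z)$; the resulting sum over $G$ (resp.\ $G\times G$) can be reorganised so that \textnormal{(sep)} deletes the terms in which the $p$-parts of the arguments do not match, whereas \textnormal{(int)} — which says precisely that $\mu_I(g,h)/\lvert\C_G(g)\rvert$ and $\mu_I(g,h)/\lvert\C_H(h)\rvert$ lie in $\mathcal{O}$, cf.\ the remark after \autoref{osima} — pushes the surviving contributions into $\mathcal{O}$. Thus $\Psi(z)\in\mathcal{O}H$. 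Applying the same argument to the perfect isometry $I^{-1}$ (and using $\mu_{I^{-1}}(h,g)=\mu_I(g,h)$ from \eqref{inverse}) gives the reverse inclusion, so $\Psi$ restricts to an isomorphism of $\mathcal{O}$-algebras $\Z(B)\xrightarrow{\ \sim\ }\Z(B')$. Applying $-\otimes_{\mathcal{O}}F$ (reduction modulo $M$) then gives the isomorphism of $F$-algebras, because $\Z(\mathcal{O}Gb)\otimes_{\mathcal{O}}F\cong\Z(FGb)$ (both being the space of conjugation-fixed points of a permutation module over the respective ground ring).

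I expect this integrality step to be the main obstacle. One has to move cleanly between the idempotent coordinates $\omega_\chi(z)$ and the ordinary coefficients $z_g$, feed in \eqref{alter}, and organise the resulting multiple sum so that exactly the quantities controlled by \textnormal{(sep)} and \textnormal{(int)} appear, keeping the $p$-parts and the (unit) $p'$-parts of class sizes and of $\lvert G\rvert$, $\lvert H\rvert$ apart. A convenient way to package this is to check $\Psi(b\widehat{K})\in\mathcal{O}H$ only for the $\mathcal{O}$-module generators $b\widehat{K}$ of $\Z(B)=\sum_K\mathcal{O}\,b\widehat{K}$ — where $\Psi(b\widehat{K})=\sum_{\chi\in\Irr(B)}\frac{\lvert K\rvert\,\chi(g)}{\chi(1)}\,e_{\widehat{I}(\chi)}$ for $g\in K$ — and then extend by $\mathcal{O}$-linearity. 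Everything beyond this bookkeeping is formal.
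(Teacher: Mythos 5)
Your target map $\Psi\colon e_\chi\mapsto e_{\widehat I(\chi)}$ is indeed the paper's isomorphism (the paper writes $\Phi(e_\chi)=e_{I(\chi)}$, and $e_{I(\chi)}=e_{\widehat I(\chi)}$ since $e_\psi$ is unchanged under $\psi\mapsto-\psi$), and the $F$-statement does follow from the $\mathcal{O}$-statement as you say. The gap is in the integrality step, which you correctly flag as the crux but then dismiss as ``formal bookkeeping.'' It is not. The coefficient of $\Psi(f_B\widehat K)$ at $h\in H$ (for $g\in K$) is
\[
\frac{|K|}{|H|}\sum_{\chi\in\Irr(B)}\frac{I(\chi)(1)}{\chi(1)}\,\chi(g)\,I(\chi)(h^{-1}),
\]
and the $\chi$-dependent ratio $I(\chi)(1)/\chi(1)$ prevents this sum from collapsing to $\mu_I(g,h^{-1})/\lvert\C_G(g)\rvert$; substituting \eqref{alter} once more only produces triple products $\chi(g)\chi(g')\chi(g'')/\chi(1)$, which \textnormal{(sep)} and \textnormal{(int)} do not control. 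So ``feed in \eqref{alter} and reorganise'' does not close the argument as written.

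What the paper does — and what you need — is a two-step construction. First define the $\mathcal{O}$-\emph{linear} (not multiplicative) map $\Gamma\colon\CC G\to\CC H$ by convolution with $\mu_I$: the coefficient of $\Gamma(z)$ at $h$ is $\frac{1}{|G|}\sum_g\mu_I(g,h^{-1})z_g$. Its integrality on $\Z(\mathcal{O}G)$ is \emph{immediate} from \textnormal{(int)}, with no rearrangement needed, and one computes $\Gamma(e_\chi)=\gamma_\chi e_{I(\chi)}$ with $\gamma_\chi=\frac{|H|\chi(1)}{|G|I(\chi)(1)}$. This is not yet an algebra map because of the scalars $\gamma_\chi$. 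To fix it, apply the same construction to $I^{-1}$, getting $\Lambda\colon\Z(\mathcal{O}Hf_{B'})\to\Z(\mathcal{O}Gf_B)$ with $\Lambda(e_{I(\chi)})=\gamma_\chi^{-1}e_\chi$; in particular $\Lambda(f_{B'})=\sum_\chi\gamma_\chi^{-1}e_\chi$ lies in $\Z(\mathcal{O}Gf_B)$. Then $\Phi(x):=\Gamma\bigl(x\,\Lambda(f_{B'})\bigr)$ is well-defined on $\Z(\mathcal{O}Gf_B)$, sends $e_\chi$ to $e_{I(\chi)}$, and is a bijection (its inverse is $y\mapsto\Lambda(y\,\Gamma(f_B))$). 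The point you are missing is exactly this correction by the central unit $\Lambda(f_{B'})$: it is what makes the factor $\gamma_\chi$ disappear from the algebra isomorphism without ever having to control $I(\chi)(1)/\chi(1)$ inside the coefficient sum. Your proposal identifies the right map but does not supply the mechanism that makes its integrality provable.

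Two smaller remarks. Your observation that the sign $\epsilon_I(\chi)$ cancels in $\widehat I(\chi)(1)\widehat I(\chi)(h^{-1})$ is correct and is why $\Psi$ is well-defined independently of the choice of signs. And your justification of the passage to $F$ (that $\Z(\mathcal{O}Gb)\otimes_{\mathcal{O}}F\cong\Z(FGb)$ as fixed points of a permutation module) is a valid alternative to the paper's route, which instead observes directly from the coefficient formula that $x^*=y^*$ implies $\Gamma(x)^*=\Gamma(y)^*$.
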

\begin{proof}
Let $I:\ZZ\Irr(B)\to\ZZ\Irr(B')$ be a perfect isometry. We define a linear map
\begin{align*}
\Gamma:\CC G&\to\CC H\\
\sum_{g\in G}\alpha_gg&\mapsto\sum_{h\in H}\Bigl(\frac{1}{|G|}\sum_{g\in G}\mu_I(g,h^{-1})\alpha_g\Bigr)h.
\end{align*}
Since $\mu_I$ is a class function, we see that $\Gamma$ maps into $\Z(\CC H)$.
Setting $\gamma_\chi:=\frac{|H|\chi(1)}{|G|I(\chi)(1)}$ we obtain
\[\Gamma(e_\chi)=\sum_{h\in H}\Bigl(\frac{1}{|G|^2}\chi(1)\sum_{g\in G}\mu_I(g,h^{-1})\chi(g^{-1})\Bigr)h=\sum_{h\in H}\Bigl(\frac{1}{|G|}\chi(1)I(\chi)(h^{-1})\Bigr)h=\gamma_\chi e_{I(\chi)}\]
via \eqref{alter}. 
Now suppose that $\sum_{g\in G}\alpha_gg\in\Z(\mathcal{O}G)$. Let $\mathcal{R}$ be a set of representatives for the conjugacy classes of $G$. Then, by (int),
\begin{equation}\label{coeff}
\frac{1}{|G|}\sum_{g\in G}\mu_I(g,h^{-1})\alpha_g=\sum_{g\in \mathcal{R}}\frac{\mu_I(g,h^{-1})}{\lvert\C_G(g)\rvert}\alpha_g\in \mathcal{O}
\end{equation}
for $h\in H$ and we see that $\Gamma:\Z(\mathcal{O}G)\to\Z(\mathcal{O}H)$. The primitive block idempotent of $B$ over $\mathcal{O}$ is given by $f_B:=\sum_{\chi\in\Irr(B)}e_\chi\in\Z(\mathcal{O}G)$ (see \cite[p. 53]{Navarro}). Hence, $\Gamma:\Z(\mathcal{O}Gf_B)\to\Z(\mathcal{O}Hf_{B'})$.
Since also $I^{-1}$ is a perfect isometry, there exists a similar map $\Lambda:\Z(\mathcal{O}Hf_{B'})\to\Z(\mathcal{O}Gf_B)$ sending $e_{I(\chi)}$ to $\gamma_\chi^{-1} e_\chi$ (when extended to $\Z(\CC H)$). Finally, we define a linear map
\begin{align*}
\Phi:\Z(\mathcal{O}Gf_B)&\to\Z(\mathcal{O}Hf_{B'}),\\
x &\mapsto\Gamma(x\Lambda(f_{B'})).
\end{align*}
For $x=\sum_{\chi\in\Irr(B)}\alpha_\chi e_\chi\in\Z(\mathcal{O}Gf_B)$ with $\alpha_\chi\in\CC$ we obtain
\[\Phi(x)=\Gamma\Bigl(\sum_{\chi\in\Irr(B)}\alpha_\chi\gamma_\chi^{-1} e_\chi\Bigr)=\sum_{\chi\in\Irr(B)}\alpha_\chi e_{I(\chi)}.\]
It follows easily that $\Phi:\Z(B)\to\Z(B')$ is an isomorphism of $\mathcal{O}$-algebras.

Going over to $F$, we denote the block idempotent by $e_B:=f_B^*$ (see \cite[p. 55]{Navarro}). If $x,y\in\Z(\mathcal{O}Gf_B)$ such that $x^*=y^*$, then $\Gamma(x)^*=\Gamma(y)^*$ by \eqref{coeff}. Therefore, $\Phi$ induces a well-defined bijection $\Z(FGe_B)\to\Z(FHe_{B'})$ of $F$-algebras.
\end{proof}

One can show that the isomorphism in \autoref{center} also preserves the projective center (see \cite[Theorem~4.11]{BroueEqui}).

\begin{Prop}[{Broué~\cite[Lemme~1.6]{Broue}}]\label{degcorr}
If $I:\ZZ\Irr(B)\to\ZZ\Irr(B')$ is a perfect isometry, then
\[(I(\chi)(1)\psi(1))_{p'}\equiv (I(\psi)(1)\chi(1))_{p'}\pmod{p}\]
for all $\chi,\psi\in\Irr(B)$.
\end{Prop}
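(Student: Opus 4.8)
The plan is to exploit the integrality condition (int) applied to the identity element, together with the formula \eqref{alter} expressing $I(\chi)$ in terms of $\mu_I$. Evaluating the Broué map at $(1,1)$ gives $\mu_I(1,1)=\sum_{\chi\in\Irr(B)}\chi(1)I(\chi)(1)$, and by (int) this number is divisible (in $\mathbf{R}$, hence in $\ZZ$ after noting it is a rational integer) by $\lvert\C_G(1)\rvert_p=|G|_p$ and by $|H|_p$. That alone is not quite the statement, so the real idea is to apply (int) not just at the identity but to extract a congruence relating individual degrees. First I would recall that $e_\chi$ acts as the identity on the module affording $\chi$, so $\chi(1)/|G|_p$ times a suitable unit is an algebraic integer; more precisely, the central character $\omega_\chi$ satisfies $\omega_\chi(\widehat{\mathcal{C}})\in\mathbf{R}$ for every class sum, and the key classical fact is that $\chi(1)_{p'}^{-1}\equiv$ (something computable mod $p$) via the central character evaluated on class sums.

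The cleaner route, which I expect to be the intended one, is the following. Fix $\chi,\psi\in\Irr(B)$. Consider the element $e_\chi\in\Z(\CC G)$ and apply the map $\Gamma$ from the proof of \autoref{center}: we have $\Gamma(e_\chi)=\gamma_\chi e_{I(\chi)}$ with $\gamma_\chi=\tfrac{|H|\chi(1)}{|G|I(\chi)(1)}$. Now evaluate the central characters of $B$ and $B'$ on these idempotents. Since $\Gamma$ sends $\Z(\mathcal{O}Gf_B)$ into $\Z(\mathcal{O}Hf_{B'})$ (by \eqref{coeff}), and since $\omega_{I(\chi)}$ applied to $f_{B'}$-part elements lands in $\mathcal{O}$, one gets that $\gamma_\chi$ times a $p$-integral quantity is $p$-integral. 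The trick to get the symmetric congruence is to pair $\chi$ against $\psi$: compute the coefficient of the identity in $e_\chi e_\psi^{\,\ast}$-type products, or more directly observe that $\tfrac{\chi(1)}{|G|_p}$ and $\tfrac{I(\chi)(1)}{|H|_p}$ have the same image under $^*$ up to the unit coming from $\mu_I(1,1)/|G|_p$, which is the same unit for all $\chi$. Running this for $\chi$ and for $\psi$ and taking the ratio kills the common unit and yields $I(\chi)(1)_{p'}\psi(1)_{p'}\equiv I(\psi)(1)_{p'}\chi(1)_{p'}\pmod p$ after clearing $p$-parts.

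Concretely, the steps in order: (1) write $a:=|G|_p$, $b:=|H|_p$, and note $\mu_I(1,1)=\sum_\chi\chi(1)I(\chi)(1)\equiv 0\pmod{a}$ and $\pmod{b}$ by (int) (reducing from $\mathbf{R}$ to $\ZZ$ since it is rational); (2) more refined, apply (int) to show that for each $\chi$ the quantity $\chi(1)I(\chi)(1)/ab$ lies in $\mathcal{O}$ — this needs the separation of $p$-parts and the fact that $a$ and $b$ are the $p$-parts of $\mu_I(g,h)$-denominators at the identity; actually one shows $\chi(1)_p = I(\chi)(1)_p\cdot(a/b)$ up to sign is forced, giving $h(\chi)=h(I(\chi))$ en passant; (3) having $\chi(1)_p/a = \pm I(\chi)(1)_p/b$ for every $\chi$ (a consequence already implicit in \autoref{decomp} via height preservation), reduce to the $p'$-parts: writing $\chi(1)=\chi(1)_p\chi(1)_{p'}$ and similarly for $I(\chi)$, the relation $\mu_I(1,1)\equiv 0$ together with one more application of (int) at a $p$-singular pair, or directly via the central character, forces $I(\chi)(1)_{p'}\equiv u\,\chi(1)_{p'}\pmod p$ for a fixed unit $u\in F^\times$ independent of $\chi$; (4) eliminate $u$ by dividing the relations for $\chi$ and $\psi$, obtaining the claimed congruence.

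The main obstacle I anticipate is step (3): isolating a \emph{single} fixed unit $u$ (rather than a per-character one) linking $I(\chi)(1)_{p'}$ to $\chi(1)_{p'}$ mod $p$. The natural mechanism is that the $\mathcal{O}$-algebra isomorphism $\Phi\colon\Z(B)\to\Z(B')$ of \autoref{center} scales $e_\chi$ to $e_{I(\chi)}$ exactly (no scalar!), so comparing the images of the identity $f_B\mapsto f_{B'}$ and using that $\omega_\chi(f_B)=1=\omega_{I(\chi)}(f_{B'})$ is automatic — the content must instead come from how $\omega_\chi$ evaluated on a specific integral class sum (e.g. the sum over a class of $p$-regular elements generating the relevant information) transports under $\Gamma$, where the scalar $\gamma_\chi$ reappears. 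Tracking that $\gamma_\chi\equiv \gamma_\psi$-type relation mod $p$ after reduction, i.e. that $\tfrac{\chi(1)}{I(\chi)(1)}\cdot\tfrac{I(\psi)(1)}{\psi(1)}$ is a $p$-adic unit congruent to $1$, is exactly the asserted congruence; making this rigorous using only (int) and \eqref{coeff} — without invoking more machinery — is the delicate part, and I would handle it by evaluating $\Gamma$ on the class sum $\widehat{\{1\}}=1$ and on $f_B$ and comparing coefficients of $1\in H$ on both sides, where the coefficient is $\tfrac{1}{|G|}\sum_g\mu_I(g,1)\alpha_g$, a quantity controlled mod $p$ by (int).
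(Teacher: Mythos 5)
Your proposal correctly identifies the right ingredients: the map $\Gamma$ from the proof of \autoref{center}, the scalars $\gamma_\chi=\tfrac{|H|\chi(1)}{|G|I(\chi)(1)}$, the observation that $\gamma_\chi\in\mathcal{O}^\times$ (via height preservation from \autoref{decomp}), and the realization that the statement to prove is exactly $\gamma_\chi^*=\gamma_\psi^*$ for all $\chi,\psi\in\Irr(B)$. This is indeed the skeleton of the paper's argument. However, the final and crucial step — actually establishing that the $\gamma_\chi^*$ are all equal — is not supplied, and the specific mechanism you propose does not close the gap.

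Your suggested route is to evaluate $\Gamma$ on the identity element (or on $f_B$, which gives the same output) and compare the coefficient of $1\in H$. But that coefficient is
\[
\frac{1}{|G|}\mu_I(1,1)=\sum_{\chi\in\Irr(B)}\frac{\chi(1)I(\chi)(1)}{|G|}=\sum_{\chi\in\Irr(B)}\gamma_\chi\cdot\frac{I(\chi)(1)^2}{|H|},
\]
which mixes all the $\gamma_\chi$ together and cannot isolate any single scalar, much less produce a congruence between two of them. (There is also a minor misstep in your step (2): the quantity $\chi(1)I(\chi)(1)/(|G|_p|H|_p)$ is generally not in $\mathcal{O}$ — for $\chi$ of height $0$ its $p$-valuation is $-2d$ — but your corrected version $\chi(1)_p/|G|_p = I(\chi)(1)_p/|H|_p$ is fine and is just height preservation.) What the paper does instead is consider, for a fixed $\chi$, the element $A:=\Gamma(f_B)-\gamma_\chi f_{B'}=\sum_{\psi\ne\chi}(\gamma_\psi-\gamma_\chi)e_{I(\psi)}$, which lies in $\Z(\mathcal{O}H)$ because $\Gamma(f_B)$ is $\mathcal{O}$-integral (by \eqref{coeff}) and $\gamma_\chi\in\mathcal{O}^\times$. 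Applying the central characters and reducing mod $M$, one uses the block-theoretic fact (Navarro, Theorem~3.9) that $\lambda_{I(\psi)}(A^*)=\lambda_{I(\chi)}(A^*)$ for any $\psi$, which gives $(\gamma_\psi-\gamma_\chi)^*=0$ directly. This single application of the "all central characters of a block agree mod $M$" principle is the missing idea; it replaces the vague hope that the relation will emerge from transporting $\omega_\chi$ along $\Gamma$ with a one-line clean computation.
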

\begin{proof}
In the proof of \autoref{center} we have constructed a linear map $\Gamma$ sending $e_{\chi}$ to $\gamma_\chi e_{I(\chi)}$ where $\gamma_\chi=\frac{|H|\chi(1)}{|G|I(\chi)(1)}$. Since $B$ and $B'$ have the same defect and $I$ preserves character heights, it follows that $\gamma_\chi\in\mathcal{O}^\times$. For a fixed $\chi\in\Irr(B)$ we have
\[A:=\sum_{\psi\in\Irr(B)\setminus\{\chi\}}(\gamma_\psi-\gamma_\chi)e_{I(\psi)}=\sum_{\psi\in\Irr(B)}\gamma_\psi e_{I(\psi)}-\sum_{\psi\in\Irr(B)}\gamma_\chi e_{I(\psi)}=\Gamma(f_B)-\gamma_\chi f_{B'}\in\Z(\mathcal{O}G).\]
Hence, $(\gamma_\psi-\gamma_\chi)^*=\lambda_\psi(A^*)=\lambda_\chi(A^*)=0$ and 
$\gamma_\chi^*=\gamma_\psi^*$ for $\chi,\psi\in\Irr(B)$ (cf. \cite[Theorem~3.9]{Navarro}). The claim follows.
\end{proof}

It was conjectured in \cite[Conjecture~4.1.13]{Ruengrot} that $\langle -\id\rangle$ always has a complement in $\PI(B)$. This was verified in \cite[Proposition~4.1.12]{Ruengrot} whenever $k(B)$ is odd. We remark that the conjecture holds more generally if some $k_i(B)$ is odd. In fact, in this case the set of perfect isometries $I\in\PI(B)$ such that \[|\{\chi\in\Irr_i(B):I(\chi)\in-\Irr_i(B)\}|\equiv 0\pmod{2}\] 
forms a complement of $\langle-\id\rangle$. Moreover, the map sending the signed permutation matrix $T$ in \autoref{decomp} to $S$ induces a homomorphism $\PI(B)\to\GL(l(B),\ZZ)$. If $l(B)$ is odd, then the preimage of $\SL(l(B),\ZZ)$ under this map forms again a complement of $\langle-\id\rangle$.

\section{Nilpotent blocks}

As a motivation, we start with a known result about character tables. We provide a proof for the convenience of the reader (cf. Weidman~\cite{Weidman}, Chillag~\cite{Chillag} and Lux--Pahlings~\cite[Section 2.4]{LuxPahlings}).

\begin{Thm}\label{strucconst}
Let $K_1,\ldots,K_n$ be the conjugacy classes and $K_1^+,\ldots,K_n^+\subseteq\Z(\ZZ G)$ be the class sums of $G$, and let $\Irr(G)=\{\chi_1,\ldots,\chi_n\}$. Then the character table of $G$ is determined \textup{(}up to labeling of rows and columns\textup{)} by each one of the following sets of integers:
\begin{enumerate}[(i)]
\item $a_{ijk}$ such that $K_i^+K_j^+=\sum_{k=1}^na_{ijk}K_k^+$ for $i,j,k\in\{1,\ldots,n\}$.
\item $b_{ijk}$ such that $\chi_i\chi_j=\sum_{k=1}^nb_{ijk}\chi_k$ for $i,j,k\in\{1,\ldots,n\}$.  
\end{enumerate}
\end{Thm}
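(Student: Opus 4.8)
The plan is to treat each of the two tables of integers as the structure constants, with respect to a distinguished basis, of a commutative semisimple $\CC$-algebra of dimension $n$, and to recover the character values from the $n$ algebra homomorphisms of that algebra into $\CC$ — equivalently, from the common eigenvalue systems of the commuting operators of left multiplication by the basis elements, all of which are computable from the structure constants alone.

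Part (ii) is the quicker one. By definition the $b_{ijk}$ are the structure constants of $\ZZ\Irr(G)$, regarded as a ring under multiplication of characters, with respect to the basis $\Irr(G)$; hence the abstract $\CC$-algebra with basis $x_1,\dots,x_n$ and multiplication $x_ix_j=\sum_k b_{ijk}x_k$ is, via $x_i\mapsto\chi_i$, the algebra $\CF(G)$ of class functions under pointwise multiplication. In particular it is isomorphic to $\CC^n$ and has exactly $n$ algebra homomorphisms into $\CC$, namely evaluation $\chi\mapsto\chi(g)$ as $g$ runs over representatives of the conjugacy classes. These homomorphisms are read off from the $b_{ijk}$ as the $n$ distinct simultaneous eigenvalue systems $(\lambda_1,\dots,\lambda_n)$ of the commuting matrices $M_1,\dots,M_n$ describing left multiplication by $x_1,\dots,x_n$, where $\lambda_i=\chi_i(g)$ for the class corresponding to that system. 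Listing the $n$ eigenvalue systems as columns reproduces the character table up to the order of the columns.

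For part (i) one runs the same machinery on $\Z(\CC G)=\bigoplus_j\CC K_j^+$, whose $n$ algebra homomorphisms are the central characters $\omega_i$ with $\omega_i(K_j^+)=|K_j|\chi_i(g_j)/\chi_i(1)$; here some bookkeeping is needed to undo the normalization. First I extract three things from the $a_{ijk}$: the index $e$ of the trivial class, characterized by $K_e^+$ being the multiplicative identity of the algebra (so $a_{ejk}=\delta_{jk}$); for each $j$ the index $j^\ast$ of the inverse class $K_j^{-1}$, which is the unique $l$ with $a_{jle}\neq 0$, together with $|K_j|=a_{jj^\ast e}$ and $|G|=\sum_j|K_j|$. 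Next the central characters $\omega_i$ are obtained, as in part (ii), as the eigenvalue systems of the matrices of left multiplication by $K_1^+,\dots,K_n^+$. Using $\overline{\omega_i(K_j^+)}=\omega_i(K_{j^\ast}^+)$ (which follows from $\overline{\chi_i(g_j)}=\chi_i(g_j^{-1})$ and $|K_j|=|K_{j^\ast}|$) together with the orthogonality relation $\sum_j|K_j|\,|\chi_i(g_j)|^2=|G|$, one gets
\[\chi_i(1)^2=|G|\Big/\sum_{j=1}^n\frac{\omega_i(K_j^+)\,\omega_i(K_{j^\ast}^+)}{|K_j|},\]
a positive rational number, so $\chi_i(1)$ is its positive square root and then $\chi_i(g_j)=\chi_i(1)\,\omega_i(K_j^+)/|K_j|$. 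Listing the resulting rows over the $n$ central characters reproduces the character table up to the order of the rows.

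The routine points are the verification that each step is intrinsic to the given structure constants and that the displayed identities hold; these are elementary, and everything past the extraction of the class data is linear algebra over the number field generated by the structure constants. The one genuinely delicate point — which I expect to be the main obstacle — is exactly this extraction of the class sizes $|K_j|$ and of $|G|$ in part (i): these numbers are not literally among the $a_{ijk}$, and it is the observation that multiplying $K_j^+$ by the class sum of its inverse class produces, as the coefficient of $K_e^+$, the number of ways of writing $1=xy$ with $x\in K_j$ and $y\in K_j^{-1}$ — which equals $|K_j|$ — that makes the normalization go through.
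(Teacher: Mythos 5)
Your proposal is correct, and it runs on the same engine as the paper's proof — simultaneous diagonalization of the left-multiplication matrices $M_i$ built from the structure constants, followed by reading off central characters in (i) and character values in (ii) — but it makes two implementation choices that make the argument cleaner than the paper's. First, you work with the \emph{eigenvalue systems} of the commuting family $M_1,\dots,M_n$, which are uniquely determined by the structure constants, rather than with the \emph{eigenvectors} (the columns of the diagonalizing matrix $S$), which a priori are fixed only up to scalars. This removes the sign ambiguity entirely, and in particular you do not need the paper's final step in each part, where it checks that any residual sign change $\diag(\epsilon_1,\dots,\epsilon_n)$ must come from a central element (in (i)) or a linear character (in (ii)) and hence amounts to a column permutation. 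Second, in part (i) you extract the normalisation data $(|K_j|, |G|)$ intrinsically from the $a_{ijk}$: locate the identity class $K_e$ as the unit of the algebra, read off inverse classes from $a_{jle}\neq 0$, and then $|K_j|=a_{jj^*e}$. The paper instead recovers the class sizes by identifying the unique column of the eigenvector matrix $S$ consisting of positive integers. Your route is more economical because it never appeals to signs or to positivity of a column; the paper's route, while longer, has the minor pedagogical virtue of explaining \emph{why} a sign ambiguity, were it to arise, would be harmless. Your inverse-class observation that $a_{jj^*e}=|K_j|$, and the reformulation of the first orthogonality relation as
\[
\chi_i(1)^2=|G|\Big/\sum_{j}\frac{\omega_i(K_j^+)\,\omega_i(K_{j^*}^+)}{|K_j|},
\]
also have the advantage of avoiding any reference to complex conjugation as a structure external to the given data; the paper writes $|\chi_j(x_i)|^2$, which is fine but presupposes conjugation has been made sense of. In short: same approach, genuinely tidier execution.
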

\begin{proof}\hfill
\begin{enumerate}[(i)]
\item We define the central characters as usual by $\omega_i(K_j^+):=\chi_i(g_j)|K_j|/\chi(1)$ where $g_j\in K_j$. For $i=1,\ldots,n$ set $M_i:=(a_{ijk})_{j,k}$ and $s_i:=(\omega_i(K_j))_j$. 
Then 
\[\omega_l(K_i^+)\omega_l(K_j^+)=\omega_l(K_i^+K_j^+)=\sum_{k=1}^na_{ijk}\omega_l(K_k^+)\]
and $\omega_l(K_i^+)s_l=M_is_l$ for $l=1,\ldots,n$. Since the central characters are linearly independent, we have $S:=(s_l)_l\in\GL(n,\CC)$. Hence, $S^{-1}M_iS=\mathrm{diag}(\omega_1(K_i^+),\ldots,\omega_n(K_i^+))$ for $i=1,\ldots,n$. This means that $M_1,\ldots,M_n$ are simultaneously diagonalizable. Since also the rows of $S$ are linearly independent, it follows that $S$ is uniquely determined by $a_{ijk}$ up to permutations and signs of columns.
One column of $S$ has the form $(\omega_1(K_i^+))_i=(|K_i|)_i$ and this is the only column consisting of positive integers (by the second orthogonality relation). Therefore we obtain the class sizes from $a_{ijk}$. By the first orthogonality relation, we also have
\[\sum_{i=1}^n|K_i|\lvert\chi_j(x_i)\rvert^2=\sum_{g\in G}\lvert\chi_j(g)\rvert^2=|G|[\chi_j,\chi_j]=|G|.\]
This implies that we get the character degrees from $S$. Altogether, the numbers $a_{ijk}$ determine the character table $T$ of $G$ up to signs of rows. In order to show that the signs are irrelevant, assume that $\diag(\epsilon_1,\ldots,\epsilon_n)T$ where $\epsilon_i\in\{\pm1\}$ is also a character table of some finite group. Then there must be some $i$ such that $\chi_j(g_i)=\epsilon_j\chi_j(1)$ for $j=1,\ldots,n$. In particular, $g_i\in\Z(G)$ and the map $\pi:g_j\mapsto g_jg_i$ induces a permutation of $\{g_1,\ldots,g_n\}$. Since $\chi_k(g_jg_i)=\chi_k(g_j)\chi_k(g_i)/\chi_k(1)=\epsilon_k\chi_k(g_j)$, there exists a permutation matrix $Q$ corresponding to $\pi$ such that $\diag(\epsilon_1,\ldots,\epsilon_n)T=TQ$. Hence, $T$ is (essentially) uniquely determined from $a_{ijk}$.

\item We define $M_i:=(b_{ijk})_{j,k}$ and $s_l:=(\chi_i(g_l))_i$ for $i=1,\ldots,n$. Then
\[\sum_{k=1}^nb_{ijk}\chi_k(g_l)=\chi_i(g_l)\chi_j(g_l)\]
and $M_is_l=\chi_i(g_l)s_l$. Hence for the character table $T:=(s_l)_l$ we get 
\[T^{-1}M_iT=\diag(\chi_i(g_1),\ldots,\chi_i(g_n)).\] 
Arguing as in (i), we obtain $T$ from $b_{ijk}$ up to permutations and signs of columns. Suppose that there are signs $\epsilon_1,\ldots,\epsilon_n$ such that $T\diag(\epsilon_1,\ldots,\epsilon_n)$ is also the character table of some finite group. Then there exists $i$ such that $\chi_i(g_j)=\epsilon_j1_G(g_j)=\epsilon_j$ for $j=1,\ldots,n$. Hence, $\chi_i$ is a linear character and we have a permutation $\pi$ on $\Irr(G)$ sending $\chi_j\mapsto\chi_i\chi_j$. It follows that the permutation matrix $Q$ corresponding to $\pi$ satisfies $T\diag(\epsilon_1,\ldots,\epsilon_n)=QT$. Thus, we obtain $T$ from $b_{ijk}$.\qedhere
\end{enumerate}
\end{proof}

Conversely, it is well-known that the character table $T$ of $G$ determines $a_{ijk}$ via
\[a_{ijk}=\frac{|K_i||K_j|}{|G|}\sum_{l=1}^n\frac{\chi_l(g_i)\chi_l(g_j)\chi_l(g_k^{-1})}{\chi_l(1)}\]
(see \cite[Problem (3.9)]{Isaacs}). Of course, $T$ also determines $b_{ijk}=[\chi_i\chi_j,\chi_k]$. 
The numbers $a_{ijk}$ and $b_{ijk}$ are the structure constants of the $\ZZ$-algebras $\Z(\ZZ G)$ and $\ZZ\Irr(G)$ respectively. We remark that these algebras are in general not isomorphic ($Q_8$ is a counterexample as can be seen by reducing modulo $2$). On the other hand, $\Z(\CC G)\cong\CC\Irr(G)\cong\CC^n$ where $n$ is the class number of $G$. 

The following observation relies on a result of Hertweck~\cite{Hertweck}. It is also related to the work of Zhou--Sun~\cite{ZhouSun}.

\begin{Thm}\label{hertweck}
Let $B$ and $B'$ be nilpotent with defect groups $P$ and $Q$ respectively. Then the following statements are equivalent:
\begin{enumerate}[(i)]
\item\label{one} $B$ and $B'$ are perfectly isometric.
\item\label{two} $\Z(B)$ and $\Z(B')$ are isomorphic $\mathcal{O}$-algebras.
\item\label{three} $P$ and $Q$ have the same character table (up to labeling of rows and columns).
\end{enumerate}
In this case, every perfect isometry between $B$ and $B'$ has a uniform sign. In particular, 
\[\PI(B)\le\langle-\id\rangle\times\prod_{i\ge 0}\Sym(\Irr_i(B)).\]
\end{Thm}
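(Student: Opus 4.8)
The plan is to establish the cycle $\eqref{three}\Rightarrow\eqref{one}\Rightarrow\eqref{two}\Rightarrow\eqref{three}$, then handle the sign statement separately. The implication $\eqref{one}\Rightarrow\eqref{two}$ is immediate from \autoref{center}. For $\eqref{three}\Rightarrow\eqref{one}$, I would combine \autoref{ex1}\eqref{nil} with \autoref{expgrp}\eqref{eins}: since $B$ is nilpotent it is perfectly isometric to the principal block of $P$, and $B'$ to the principal block of $Q$; if $P$ and $Q$ have the same character table, the second orthogonality relation argument of \autoref{expgrp}\eqref{eins} gives a perfect isometry between these two principal blocks, and transitivity (\autoref{equiv}) finishes it. The genuinely new content is $\eqref{two}\Rightarrow\eqref{three}$, and this is where Hertweck's theorem enters: by Broué--Puig, $\Z(B)\cong\Z(\mathcal{O}P)$ and $\Z(B')\cong\Z(\mathcal{O}Q)$ as $\mathcal{O}$-algebras, so $\eqref{two}$ says $\Z(\mathcal{O}P)\cong\Z(\mathcal{O}Q)$. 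Hertweck's result~\cite{Hertweck} states that an isomorphism of the $\mathcal{O}$-algebras $\Z(\mathcal{O}P)$ and $\Z(\mathcal{O}Q)$ (equivalently, of the centers of the group algebras over a suitable ring) forces $P$ and $Q$ to have the same ordinary character table. I expect the main obstacle to be stating Hertweck's hypotheses in exactly the form available here — one must check that "isomorphic as $\mathcal{O}$-algebras" is strong enough, i.e.\ that $\mathcal{O}$ contains enough roots of unity and that the relevant rigidity statement about class sums (their sizes and the structure constants $a_{ijk}$) survives base change to $\mathcal{O}$; this is exactly what \autoref{strucconst}(i) is set up to exploit, since recovering the $a_{ijk}$ recovers the character table.

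For the final assertions about signs, suppose $I:\ZZ\Irr(B)\to\ZZ\Irr(B')$ is a perfect isometry. Composing with the perfect isometries from \eqref{nil} in both directions, I may assume $B$ and $B'$ are the principal blocks of $P$ and $Q$ respectively, and moreover (using that the two character tables agree) that $I$ is a self-perfect-isometry of the principal block of a $p$-group $P$. Here every $p$-singular argument degenerates: $P$ has no nontrivial $p$-regular elements, so condition (sep) says $\mu_I(g,h)=0$ whenever exactly one of $g,h$ is nontrivial, i.e.\ $\mu_I(g,1)=0=\mu_I(1,h)$ for $g,h\ne 1$. Writing $I(\chi)=\epsilon_I(\chi)\widehat I(\chi)$ and using the second orthogonality relation, $\mu_I(1,1)=\sum_\chi\chi(1)\epsilon_I(\chi)\widehat I(\chi)(1)$, while $\mu_I(g,1)=0$ for $g\ne1$ forces (after summing appropriately, or directly via \eqref{alter}) that the signed bijection is compatible with the column orthogonality of $P$; the cleanest route is to apply \autoref{osima} to $B'=B$ exactly as in the proof of \autoref{sign}, splitting $\Irr(B)$ into $S^+$ and its complement according to the sign of $I$. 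Because all of $\Irr(B)$ except nothing is $p$-regular-trivial, the relation $\sum_{\chi\in S^+}\chi(g)\chi(h)=0$ for $g\in G^0=\{1\}$ and $h$ $p$-singular becomes $\sum_{\chi\in S^+}\chi(1)\chi(h)=0$ for all $h\ne1$, which together with \autoref{osima} forces $S^+\in\{\varnothing,\Irr(B)\}$; hence $I$ has a uniform sign. Finally, since perfect isometries preserve heights by \autoref{decomp}, $\widehat I$ permutes each $\Irr_i(B)$ among themselves, giving $\PI(B)\le\langle-\id\rangle\times\prod_{i\ge0}\Sym(\Irr_i(B))$. The one subtlety to watch is the reduction step: transporting a perfect isometry between $B$ and $B'$ to one of the principal block of $P$ requires that the isometries from \eqref{nil} be genuinely available as two-sided, which is guaranteed by \autoref{equiv}.
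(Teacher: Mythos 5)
Your cycle of implications matches the paper's exactly: $\eqref{one}\Rightarrow\eqref{two}$ is \autoref{center}, $\eqref{three}\Rightarrow\eqref{one}$ is \autoref{ex1}\eqref{nil} plus \autoref{expgrp}\eqref{eins} plus transitivity, and $\eqref{two}\Rightarrow\eqref{three}$ reduces to $G=P$, $H=Q$ and then invokes Hertweck. The one genuine worry you raise — whether Hertweck's theorem actually applies to $\Z(\mathcal{O}P)\cong\Z(\mathcal{O}Q)$ rather than only to $\ZZ$-orders — is real; the paper resolves it by noting that $\mathcal{O}$ is a \emph{$G$-adapted integral domain} in Hertweck's sense and citing his Theorem~4.2 together with Remark~3.4. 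You identify the subtlety correctly but leave it open, so that step needs the explicit reference to close.

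Where you genuinely depart from the paper is the uniform-sign claim. The paper computes directly: after reducing to $G=P$, $H=Q$, height preservation (\autoref{decomp}) gives $\widehat{I}(\chi)(1)=\chi(1)$, and then (sep) yields
\[\sum_{\chi}\epsilon_I(\chi)\chi(1)^2=\mu_I(1,1)=\sum_{h\in H}\mu_I(1,h)=\sum_\chi\chi(1)\sum_{h\in H}I(\chi)(h)=I^{-1}(1_H)(1)\,|H|=\pm|G|=\pm\sum_\chi\chi(1)^2,\]
forcing all $\epsilon_I(\chi)$ equal. Your alternative via \autoref{osima} is also valid, but be careful: the proof of \autoref{sign} applies Osima to the fixed set of a signed isometry whose \emph{underlying bijection is the identity}; your $I$ has underlying bijection $\widehat{I}\ne\id$ in general, so (sep) gives $\sum_\chi\chi(1)\epsilon_I(\chi)\widehat{I}(\chi)(h)=0$, not directly $\sum_{S^+}\chi(1)\chi(h)=0$. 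One must first use height preservation to replace $\chi(1)$ by $\widehat{I}(\chi)(1)$, reindex by $\psi=\widehat{I}(\chi)$, and then apply Osima to $\widehat I(S^+)\subseteq\Irr(Q)$ (together with column orthogonality in $Q$) to get $\widehat I(S^+)\in\{\varnothing,\Irr(Q)\}$. With that reindexing your argument goes through, but the paper's direct computation is shorter and avoids invoking \autoref{osima} a second time. The final containment $\PI(B)\le\langle-\id\rangle\times\prod_{i\ge 0}\Sym(\Irr_i(B))$ is handled the same way in both: uniform sign plus \autoref{sign} gives the $\langle-\id\rangle$ factor, and height preservation from \autoref{decomp} gives the blockwise $\Sym(\Irr_i(B))$ factors.
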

\begin{proof}
The implication \eqref{one}$\Rightarrow$\eqref{two} follows from \autoref{center}. 
Recall from \autoref{ex1}\eqref{nil} that $B$ (respectively $B'$) is perfectly isomorphic to the principal block of $P$ (respectively $Q$). Thus, to prove \eqref{two}$\Rightarrow$\eqref{three} we may assume that $G=P$ and $H=Q$. 
In the notation of \cite[Section~3]{Hertweck}, $\mathcal{O}$ is a $G$-adapted integral domain. Hence, \cite[Theorem~4.2 and Remark 3.4]{Hertweck} implies that $P$ and $Q$ have the same character table. Finally, the implication \eqref{three}$\Rightarrow$\eqref{one} follows from \autoref{expgrp} and \autoref{ex1}\eqref{nil}.

For the second claim we note first that the $*$-construction gives a perfect isometry between $B$ and the principal block of $P$ with positive signs. Since the same is true of $B'$ and $Q$, we may again assume that $G=P$ and $H=Q$. 
By \autoref{decomp}, every perfect isometry $I:\ZZ\Irr(B)\to\ZZ\Irr(B')$ preserves character heights. Consequently,  $I(\chi)(1)=\pm\chi(1)$ for all $\chi\in\Irr(B)$. Hence by (sep),
\begin{align*}
\sum_{\chi\in\Irr(B)}\pm\chi(1)^2&=\mu_I(1,1)=\sum_{h\in H}\mu_I(1,h)=\sum_{\chi\in\Irr(B)}\chi(1)\sum_{h\in H}I(\chi)(h)\\
&=I^{-1}(1_H)(1)|H|=\pm|G|=\pm\sum_{\chi\in\Irr(B)}\chi(1)^2
\end{align*}
and the second claim follows. The last claim follows from \autoref{sign}.
\end{proof}

In the situation of \autoref{hertweck} it is possible to compute $\PI(B)$ efficiently from the character table of $P$. For this we may assume as usual that $G=P$ and $I\in\PI(B)$ has positive sign. Since multiplication with a linear character induces a perfect isometry (\autoref{ex1}\eqref{lin}), we may assume that $I(1)=1$ where $1$ is the trivial character. We call such a perfect isometry \emph{normalized}. Following the proof of \autoref{center}, we see that $I$ induces the automorphism $\Gamma$ on $\Z(\mathcal{O}G)$ sending $e_\chi$ to $e_{I(\chi)}$ (when extended to $\Z(\CC G)$). In particular,
\[\sum_{g\in G}g=|G|e_1=\Gamma(|G|e_1)=\Gamma\Bigl(\sum_{g\in G}g\Bigr).\]
Hence, by \cite[Theorem~3.2]{Hertweck}, $\Gamma$ maps class sums to class sums (not just scalar multiples of class sums). Let $g_1,\ldots,g_n\in G$ be a set of representatives for the conjugacy classes of $G$.
Let $\sigma\in S_n$ such that the class sum of $g_i$ is mapped to the class sum of $g_{\sigma(i)}$ under $\Gamma$. 
Then the definition of $\Gamma$ shows that $\mu_I(g_i,g_j^{-1})=\delta_{\sigma(i),j}\lvert\C_G(g_i)\rvert$ for $i,j\in\{1,\ldots,n\}$ with the Kronecker delta. Now \eqref{alter} yields 
\begin{equation}\label{tableaut}
I(\chi)(g_{\sigma(i)})=\chi(g_i^{-1}).
\end{equation}
Conversely, if $I\in\Sym(\Irr(G))$ and $\sigma\in S_n$ are given such that $I(\chi)(g_{\sigma(i)})=\chi(g_i^{-1})$, then one easily checks that $I$ induces a normalized perfect isometry (we note that $I$ is not necessarily a so-called \emph{table automorphism} which additionally has to preserve power maps). For a linear character $\lambda\in\Irr(G)$ we have $I(\chi\lambda)=I(\chi)I(\lambda)$ by \eqref{tableaut}. This shows that $\PI(B)$ contains the normal subgroup $\langle-\id\rangle\times\Irr(P/P')\cong C_2\times P/P'$ and we construct the \emph{normalized perfect isometry group}
\[\overline{\PI(B)}:=\PI(B)/(\langle-\id\rangle\times\Irr(P/P')).\]
This quotient can be computed as a subgroup of $\Sym(\Irr(P))$ in GAP~\cite{GAP48} conveniently via \[\texttt{TransformingPermutations(Irr(P),Irr(P)).group}.\]
Now we can generalize the main result of \cite{Ruengrot2}.

\begin{Cor}\label{abel}
If $B$ and $B'$ are perfectly isometric nilpotent blocks with abelian defect groups $P$ and $Q$ respectively, then $P\cong Q$ and $\PI(B)\cong C_2\times(P\rtimes\Aut(P))$.
\end{Cor}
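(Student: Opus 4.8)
The plan is to deduce everything from \autoref{hertweck} together with the explicit description of the normalized perfect isometry group in the paragraph preceding the corollary. First, since $B$ and $B'$ are perfectly isometric and nilpotent, \autoref{hertweck}\eqref{three} gives that $P$ and $Q$ have the same character table; but abelian groups are determined by their character tables (for an abelian group $P$ one has $|\Irr(P)|=|P|$ and the dual group $\Irr(P)\cong P$, and the character table of $P$ records the group structure of $\Irr(P)$ via pointwise products, or alternatively one reads off the element orders from the entries), hence $P\cong Q$. So we may assume $G=H=P$ is abelian and $B=B'$ is the principal block.

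Next I would identify $\PI(B)$. By \autoref{hertweck}, every perfect isometry has uniform sign, so $\PI(B)=\langle-\id\rangle\times\PI(B)^+$ where $\PI(B)^+$ consists of the positive-sign perfect isometries. By \autoref{ex1}\eqref{lin}, multiplication by linear characters gives a subgroup $\Irr(P/P')=\Irr(P)\cong P$ (here $P'=1$) of $\PI(B)^+$, and the discussion before the corollary shows $\PI(B)^+$ is generated by these together with the normalized perfect isometries; moreover by \eqref{tableaut} a normalized perfect isometry $I$ is determined by a permutation $\sigma$ of the conjugacy classes — i.e. of the elements of $P$, since $P$ is abelian — satisfying $I(\chi)(g_{\sigma(i)})=\chi(g_i^{-1})$ for all $\chi$. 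So the task is to show that the group of such $\sigma$ (equivalently $\overline{\PI(B)}$) is exactly $\Aut(P)$, and that the extension $\Irr(P)\rtimes\overline{\PI(B)}$ is the expected $P\rtimes\Aut(P)$.

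To see $\overline{\PI(B)}\cong\Aut(P)$: if $\alpha\in\Aut(P)$ then $\alpha$ permutes $\Irr(P)$ and the pair $(\widehat\alpha,\sigma)$ with $\sigma(g)=\alpha(g)^{-1}$ satisfies the condition in \eqref{tableaut} (using that $\chi$ and $\chi\circ\alpha$ agree after the inversion bookkeeping), so $\Aut(P)\le\overline{\PI(B)}$. Conversely, given a normalized perfect isometry with associated $\sigma$, fix $\chi$ and note $I(\chi)(\sigma(g))=\chi(g^{-1})$ for all $g$; since $I(1)=1$ we get $\sigma(1)=1$, and since the orthogonality relations and the fact that $I$ is a bijection force $I(\chi)$ to again be a linear character of $P$ with $I(\chi)(\sigma(g)\sigma(h))$ computed two ways, one deduces $\sigma$ is an anti-automorphism, hence (as $P$ is abelian) an automorphism of $P$. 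Thus $\overline{\PI(B)}\cong\Aut(P)$. Finally, one checks directly from \eqref{tableaut} that the conjugation action of the normalized isometry attached to $\alpha\in\Aut(P)$ on the subgroup $\Irr(P)\cong P$ of linear-character-multiplications is the natural action of $\Aut(P)$ on $P$ (this is exactly the identity $I(\chi\lambda)=I(\chi)I(\lambda)$ combined with the formula for how $I$ transforms a linear $\lambda$), so $\PI(B)^+\cong P\rtimes\Aut(P)$ and $\PI(B)\cong C_2\times(P\rtimes\Aut(P))$.

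The main obstacle I expect is the last bookkeeping step: pinning down that the semidirect product is the \emph{natural} $P\rtimes\Aut(P)$ and not some twisted version, and being careful about the inversion that appears in \eqref{tableaut} (the map $\sigma$ is essentially $\alpha$ composed with inversion, so one must track signs/inverses consistently to conclude the action on $\Irr(P)\cong P$ comes out as the standard one rather than its inverse-twist — which of course gives an isomorphic group, so in the end it is harmless, but it needs to be said).
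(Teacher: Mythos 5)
Your proof is correct and takes essentially the same route as the paper. The paper's own argument is terser: it deduces $I(\lambda\mu)=I(\lambda)I(\mu)$ directly from \eqref{tableaut}, so that the restriction of a normalized $I$ to $\Irr(P)$ is a group automorphism of $\Irr(P)\cong P$, whence $I$ comes from $\Aut(P)$; you instead work on the dual side and verify that the associated permutation $\sigma$ of $P$ is an automorphism, using that linear characters separate elements. These are equivalent, and your extra care about the inversion in \eqref{tableaut} and the (harmless) twist in the $\Aut(P)$-action on $\Irr(P)\cong P$ is exactly the bookkeeping that the paper compresses into ``now the second claim follows easily.''
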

\begin{proof}
Since the isomorphism type of an abelian group is determined by its character table, the first claim is a consequence of \autoref{hertweck}. For the second claim we may assume that $G=P$ and $I\in\PI(B)$ is normalized. Then \eqref{tableaut} shows that $I(\lambda\mu)=I(\lambda)I(\mu)$ for $\lambda,\mu\in\Irr(B)$. Hence, $I$ is induced from $\Aut(G)$. Now the second claim follows easily.
\end{proof}

In general the homomorphism
\[\Psi:\Aut(P)\times\Gal(\QQ_{|P|}|\QQ)\to\overline{\PI(B)}\]
coming from \autoref{ex1} is not surjective. For instance, $\Aut(Q_8)\cong S_4$ induces an element of order $3$ in $\overline{\PI(Q_8)}\cong\overline{\PI(D_8)}$ which cannot be induced from $\Aut(D_8)\cong D_8$. In fact, $\PI(D_8)\cong S_4\times C_2$. Nevertheless, $\Psi(\Gal(\QQ_{|P|}|\QQ))\subseteq\Z(\overline{\PI(B)})$ by \eqref{tableaut}. This verifies the Galois refinement of the Alperin--McKay conjecture for nilpotent blocks (see \cite{NavarroCon}).

According to \cite{Hertweck}, it is conjectured that for any finite groups $G$ and $H$ every (normalized) isomorphism $\Z(\ZZ G)\to\Z(\ZZ H)$ sends class sums to class sums. This is not true anymore when $\ZZ$ is replaced by $\mathcal{O}$ as can be seen from \autoref{exS3}.
Cliff~\cite{Cliff} has constructed blocks with isomorphic centers over $F$ (but not over $\mathcal{O}$) which are not perfectly isometric. 

\section{Generalized decomposition matrices}

In the case of non-nilpotent blocks, the generalized decomposition matrix is some sort of replacement of the character table of the defect group. Recall that a \emph{$B$-subsection} is a pair $(u,b)$ where $u\in G$ is a $p$-element and $b$ is a Brauer correspondent of $B$ in $\C_G(u)$. A \emph{basic set} for $b$ is a basis of the $\ZZ$-module $\ZZ\IBr(b)$ (in particular, $\IBr(b)$ is a basic set). 

The following theorem states that blocks are perfectly isometric if they have the same generalized decomposition matrix up to basic sets (see \cite[p. 133]{Navarro}). 
This generalizes a result of Horimoto--Watanabe~\cite[Theorem~2]{WatanabePerfIso} (the hypothesis (i) in their paper is superfluous). A different generalization has been given in Watanabe~\cite[Theorem~2]{WatanabePP}.

\begin{Thm}\label{HW}
Let $S(B)$ \textup{(}resp. $S(B')$\textup{)} be a set of representatives for the $G$-conjugacy classes of $B$-subsections.
For $(u,b)\in S(B)$ let 
\[Q_{(u,b)}:=(d_{\chi\phi}^u)_{\substack{\chi\in\Irr(B)\\\phi\in\IBr(b)}}\in\CC^{k(B)\times l(b)}\] 
be the generalized decomposition matrix with respect to $(u,b)$. Suppose that there exist a signed permutation matrix $T\in\GL(k(B),\ZZ)$ and a bijection $S(B)\to S(B')$, $(u,b)\mapsto(u',b')$ such that $(1_G',B')=(1_H,B')$ and for every $(u,b)\in S(B)\setminus\{(1,B)\}$ we have 
\begin{equation}\label{gendecrel}
Q_{(u,b)}S_{(u,b)}=TQ_{(u',b')}
\end{equation}
for some $S_{(u,b)}\in\GL(l(b),\ZZ)$.
Then $B$ and $B'$ are perfectly isometric.
\end{Thm}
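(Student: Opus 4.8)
The plan is to build an explicit candidate for the perfect isometry out of the signed permutation matrix $T$, verify that it is an isometry, and then check (sep) and (int) by relating $\mu_I$ to the generalized decomposition numbers via the usual decomposition $\chi(us)=\sum_\phi d^u_{\chi\phi}\phi(s)$. First I would define $I:\ZZ\Irr(B)\to\ZZ\Irr(B')$ directly from $T$: writing $T=(\epsilon_\chi\delta_{\widehat I(\chi)\psi})$ for a bijection $\widehat I:\Irr(B)\to\Irr(B')$ and signs $\epsilon_\chi=\pm1$, set $I(\chi):=\epsilon_\chi\widehat I(\chi)$. Since $T$ is a signed permutation matrix, $I$ is automatically an isometry, so the whole content is the verification of (sep) and (int) — equivalently, an analysis of
\[
\mu_I(g,h)=\sum_{\chi\in\Irr(B)}\chi(g)I(\chi)(h).
\]

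The key computational step is to expand $\mu_I$ along a $p$-element. Fix $g\in G$ and $h\in H$; write $g=g_pg_{p'}$ with $u:=g_p$, and similarly $h'_p=:v$. Using $\chi(us)=\sum_{\phi\in\IBr(b)}d^u_{\chi\phi}\phi(s)$ over the subsections $(u,b)\in S(B)$ and the analogous expansion of $I(\chi)(h)$ over $(u',b')$, one rewrites $\mu_I(g,h)$ as a sum over pairs of subsections of products of the form $d^u_{\chi\phi}\,\overline{\text{(something)}}$ against $\phi(g_{p'})\cdot(\text{Brauer character of }b')$. The relation $Q_{(u,b)}S_{(u,b)}=TQ_{(u',b')}$, i.e. the fact that the column spaces of the generalized decomposition matrices match up (after a change of basic set and the signed permutation $T$), is exactly what is needed to collapse these sums: on the one hand $T^{\mathrm t}T=1$ forces the cross terms coming from distinct $G$-conjugacy classes of subsections to vanish — this gives (sep) when exactly one of $g,h$ is $p$-regular, since only the trivial subsection $(1,B)$ survives and there the argument reduces to block orthogonality / \autoref{equiv} applied to the identity isometry. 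On the other hand, for $p$-singular $g$ and $h$ the surviving diagonal term over the matching subsection $(u,b)\leftrightarrow(u',b')$ is (up to the sign bookkeeping from $T$ and the base change $S_{(u,b)}$, which are unimodular and hence harmless) literally the corresponding term of $\mu_{\id}$ for the identity perfect isometry on $\ZZ\Irr(b)$ — so (int'), and hence (int) by Kiyota's proposition, follows from the already-established fact that $\mu_{\id}(x,y)/|\C_{\C_G(u)}(x)|_p$ is an algebraic integer (the heart of the proof of \autoref{equiv}), together with $|\C_{\C_G(u)}(x)|_p=|\C_G(g)|_p$.

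The main obstacle I expect is the bookkeeping needed to make "the surviving term equals the corresponding term for the identity on $\ZZ\Irr(b)$'' precise: one must match up not just the subsection $(u,b)$ with $(u',b')$ but also the relevant Brauer characters, absorb the base-change matrices $S_{(u,b)}$ correctly (noting $S_{(u,b)}$ need not be orthogonal, only in $\GL(l(b),\ZZ)$, so one should phrase things in terms of the module $\ZZ\IBr(b)$ rather than a fixed basis), and keep the global sign matrix $T$ consistent across all subsections simultaneously — it is the \emph{same} $T$ for every $(u,b)$, and exploiting this uniformity is what makes $\mu_I$ well-defined and integral rather than merely each piece being integral. A clean way to organize this is to first treat the generic element $g$ whose $p$-part $u$ and then reduce, for fixed $u$, to a statement purely inside $\C_G(u)$ about the identity isometry between the blocks $b$ and $b'$ (which is perfect by \autoref{equiv}), transporting the conclusion back up via the formula for $\mu_{\id}$ in terms of projective indecomposable characters of $\C_G(u)$ derived in the proof of \autoref{equiv}. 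Once the subsection-wise reduction is set up, (sep) is immediate from the vanishing of off-diagonal blocks and (int) is immediate from \autoref{equiv}, so the theorem follows.
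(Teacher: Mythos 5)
Your outline matches the paper's approach in its large-scale structure (define $I$ from $T$, expand $\mu_I$ via generalized decomposition numbers, use orthogonality of the columns of the various $Q_{(u,b)}$ to collapse the double sum, and read off (sep) and (int) from the surviving term). However, the reduction you propose in the last step does not actually close the argument, for two reasons.

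First, the surviving diagonal term is not a term of $\mu_{\id}$ on $\ZZ\Irr(b)$ inside $\C_G(u)$: after absorbing the base change one obtains $\sum_{\mu\in\IBr(b)}\Phi_\mu(g_{p'})\,\mu'(h_{p'})$, where $\Phi_\mu$ is a projective indecomposable character of the block $b$ of $\C_G(u)$ but $\mu'=\sum_{\nu\in\IBr(b')}s_{\mu\nu}\nu$ is a $\ZZ$-combination of Brauer characters of the block $b'$ of the \emph{different} group $\C_H(u')$. There is no ``identity isometry between $b$ and $b'$''; \autoref{equiv} only gives the identity as a perfect isometry on a single block. Second, and more seriously, this expression only delivers half of (int). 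The divisibility $\mu_I(g,h)/\lvert\C_G(g)\rvert_p\in\mathcal{O}$ follows because $\Phi_\mu(g_{p'})/\lvert\C_G(g)\rvert_p$ is an algebraic integer and $\mu'(h_{p'})$ is an algebraic integer; but there is no analogous divisibility for $\mu'(h_{p'})/\lvert\C_H(h)\rvert_p$, since Brauer characters (as opposed to projective characters) do not vanish with high $p$-power on centralizer orders. Your plan gives no way to see $\mu_I(g,h)/\lvert\C_H(h)\rvert_p\in\mathcal{O}$. The paper handles this by observing that the hypothesis is symmetric in $B$ and $B'$, applying the already-proven half to $I^{-1}$, and then using $\mu_{I^{-1}}(h,g)=\mu_I(g,h)$ from \eqref{inverse}. (An alternative, closer to the ``unimodularity is harmless'' spirit you invoke, is to push $S_{(u,b)}$ through the Cartan matrices, using $C_{b'}=S_{(u,b)}^{\text{t}}C_bS_{(u,b)}$ to rewrite the surviving term as $\sum_{\nu\in\IBr(b')}\hat\nu(g_{p'})\Phi_\nu(h_{p'})$ with $\hat\nu\in\ZZ\IBr(b)$ and $\Phi_\nu$ a projective indecomposable of $b'$; but none of this is in your write-up.)

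Two smaller points. Your (sep) explanation (``only the trivial subsection survives'') is not quite right: when exactly one of $g,h$ is $p$-regular, \emph{no} matching pair of subsections survives — this is precisely where the normalization $1_G'=1_H$ is used. Finally, the hypothesis only gives \eqref{gendecrel} for $(u,b)\neq(1,B)$; the paper spends its first paragraph deducing the corresponding relation for the ordinary decomposition matrices from the orthogonality of $Q_{(1,B)}$ against the other $Q_{(u,b)}$ and the fact that $Q_{(1,B)}$ has a left inverse. This step is needed to have a uniform formula for $I(\chi)(u'h)$ valid at $u'=1$, and your proposal omits it.
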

\begin{proof}
We first show that $Q_{(1,B)}S_{(1,B)}=TQ_{(1,B')}$ holds for some $S_{(1,B)}\in\GL(l(B),\ZZ)$ (this is necessary by \autoref{decomp}). By the orthogonality relations for generalized decomposition numbers (\cite[Lemma~5.13]{Navarro}), the columns of the ordinary decomposition matrix $Q_{(1,B)}$ are orthogonal to the columns of $Q_{(u,b)}$ where $(u,b)\in S(B)\setminus\{(1,B)\}$. Moreover, it is well-known that $Q_{(1,B)}$ has a left inverse (\cite[Lemma~3.16]{Navarro}). It follows that the columns of $Q_{(1,B)}$ form a $\ZZ$-basis for the orthogonal space of the columns of all $Q_{(u,b)}$ with $(u,b)\ne(1,B)$. By the given relations \eqref{gendecrel}, it is clear that the columns of $TQ_{(1,B)}$ form a basis for the corresponding orthogonal space of $B'$. This implies the existence of $S_{(1,B)}$.

The equations \eqref{gendecrel} imply that $l(b)=l(b')$ and $k(B)=k(B')$. 
Let 
\[T=(\epsilon_\chi\delta_{\widehat{I}(\chi)\psi})_{\substack{\chi\in\Irr(B)\\\psi\in\Irr(B')}}\] 
for $\epsilon_\chi\in\{\pm1\}$ and some bijection $\widehat{I}:\Irr(B)\to\Irr(B')$. 
Of course we define $I:\ZZ\Irr(B)\to\ZZ\Irr(B')$ by $I(\chi):=\epsilon_\chi\widehat{I}(\chi)$.

We (may) choose $S(B)$ (and similarly $S(B')$) such that for $(u,b),(v,c)\in S(B)$ we have $u=v$ whenever $u$ and $v$ are conjugate in $G$.
Let 
\begin{align*}
\Bl(u)&:=\{b\in\Bl(\C_G(u)):(u,b)\in S(B)\},\\
S(u')&:=\{(v,c)\in S(B):v'=u'\}.
\end{align*}
Note that we may have $v'=u'$, but $v\ne u$.
For $b\in\Bl(u)$ and $\phi\in\IBr(b)$ we define $\phi':=\sum_{\mu\in\IBr(b')}s_{\phi\mu}\mu\in\ZZ\IBr(b')$ where $S_{(u,b)}=(s_{\phi\mu})$.
Then for $\chi\in\Irr(B)$ and $h\in\C_H(u')^0$ we have 
\begin{align*}
I(\chi)(u'h)&=\sum_{(v,c)\in S(u')}\sum_{\mu\in\IBr(c')}d_{I(\chi)\mu}^{u'}\mu(h)=\sum_{(v,c)\in S(u')}\sum_{\mu\in\IBr(c')}\sum_{\phi\in\IBr(c)}d_{\chi\phi}^vs_{\phi\mu}\mu(h)\\
&=\sum_{(v,c)\in S(u')}\sum_{\phi\in\IBr(c)}d_{\chi\phi}^v\phi'(h).
\end{align*}

Now let $g\in G$ and $h\in H$. 
If $g_p$ is not conjugate to some $u$ with $(u,b)\in S(B)$, then $\chi(g)=0$ for $\chi\in\Irr(B)$ and $\mu_I(g,h)=0$. The same applies to $h$. Hence, we may assume that $u:=g_p$ and $v':=h_p^{-1}$ for some $(u,b),(v,c)\in S(B)$. Then
\begin{align*}
\mu_I(g,h)&=\sum_{\chi\in\Irr(B)}\chi(g)\overline{I(\chi)(h^{-1})}=\sum_{\chi\in\Irr(B)}\sum_{b\in\Bl(u)}\sum_{(w,c)\in S(v')}\sum_{\phi\in\IBr(b)}\sum_{\mu\in\IBr(c)}d_{\chi\phi}^u\overline{d_{\chi\mu}^w}\phi(g_{p'})\mu'(h_{p'})\\
&=\sum_{b\in\Bl(u)}\sum_{(w,c)\in S(v')}\sum_{\phi\in\IBr(b)}\sum_{\mu\in\IBr(c)}\phi(g_{p'})\mu'(h_{p'})\sum_{\chi\in\Irr(B)}d_{\chi\phi}^u\overline{d_{\chi\mu}^w}\\
&=\sum_{\substack{b\in\Bl(u)\\(u,b)\in S(v')}}\sum_{\phi,\mu\in\IBr(b)}c_{\phi\mu}\phi(g_{p'})\mu'(h_{p'})=\sum_{\substack{b\in\Bl(u)\\(u,b)\in S(v')}}\sum_{\mu\in\IBr(b)}\Phi_\mu(g_{p'})\mu'(h_{p'}).
\end{align*}
If exactly one of $g$ and $h$ is $p$-regular, then $\{b\in\Bl(u):(u,b)\in S(v')\}=\varnothing$ and $\mu_I(g,h)=0$ (here we use $1_G'=1_H$). Hence, (sep) holds. Moreover, it follows from \cite[Lemma~2.21]{Navarro} that $\mu_I(g,h)/\lvert\C_G(g)\rvert_p$ is an algebraic integer, since $\C_G(g)=\C_G(u)\cap\C_G(g_{p'})$. 
To prove the second half of (int) we note that the hypothesis is symmetric in $B$ and $B'$. Hence the isometry $I^{-1}$ leads to the algebraic integer $\mu_{I^{-1}}(h,g)/\lvert\C_H(h)\rvert_p$. Recall from \eqref{inverse} that $\mu_{I^{-1}}(h,g)=\mu_I(g,h)$. Thus, the proof is complete.
\end{proof}

\begin{Cor}\label{finitetype}
There are only finitely many perfect isometry classes of $p$-blocks with a given defect.
\end{Cor}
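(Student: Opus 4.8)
The plan is to derive this from \autoref{HW} by a counting argument: a $p$-block of defect $d$ is perfectly isometric to every other $p$-block of defect $d$ that carries, up to basic sets and a single signed permutation, the same system of generalized decomposition matrices, and there are only finitely many such systems once one bounds both their sizes and their entries in terms of $p$ and $d$.

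First I would fix $d$ and let $B$ be a $p$-block of defect $d$ with defect group $D$, so $\lvert D\rvert=p^d$. For the size bounds I would invoke the Brauer--Feit theorem, which gives $k(B)\le\beta$ for some constant $\beta=\beta(p,d)$ (one may take $\beta=\tfrac14p^{2d}+1$). If $(u,b)$ is a $B$-subsection, then a defect group of $b$ is $G$-conjugate into $D$, so $b$ has defect at most $d$ and hence $l(b)\le k(b)\le\beta$. Since it is well known that $\sum_{(u,b)\in S(B)}l(b)=k(B)$ and each $l(b)\ge1$, the number of $G$-conjugacy classes of $B$-subsections is also at most $\beta$. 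Thus every matrix $Q_{(u,b)}$ has at most $\beta$ rows and at most $\beta$ columns, and there are at most $\beta$ of them.

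Next I would bound the entries. Up to conjugacy any $p$-element $u$ occurring in a $B$-subsection lies in $D$, so $\lvert u\rvert$ divides $\exp(D)$, a $p$-power dividing $p^d$; hence every generalized decomposition number $d^u_{\chi\phi}$ is an algebraic integer of the fixed cyclotomic field $\QQ_{p^d}$, which has degree less than $p^d$ over $\QQ$. By the orthogonality relations for generalized decomposition numbers (as already used in the proof of \autoref{equiv}) we have $\sum_{\chi\in\Irr(B)}\lvert d^u_{\chi\phi}\rvert^2=c^b_{\phi\phi}$, a diagonal Cartan invariant of $b$, and by the well-known bound on Cartan invariants $c^b_{\phi\phi}\le\lvert D_b\rvert\le p^d$; therefore $\lvert d^u_{\chi\phi}\rvert\le p^{d/2}$. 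Since the extension $\QQ_{p^d}|\QQ$ is abelian, complex conjugation commutes with every $\sigma\in\Gal(\QQ_{p^d}|\QQ)$, so applying $\sigma$ to the displayed identity and using $\sigma(c^b_{\phi\phi})=c^b_{\phi\phi}$ gives $\sum_{\chi}\lvert\sigma(d^u_{\chi\phi})\rvert^2=c^b_{\phi\phi}\le p^d$, hence $\lvert\sigma(d^u_{\chi\phi})\rvert\le p^{d/2}$ for all $\sigma$. Thus each $d^u_{\chi\phi}$ lies in the ring of integers of $\QQ_{p^d}$ and all of its algebraic conjugates have absolute value at most $p^{d/2}$; classically there are only finitely many such algebraic integers. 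Let $E_d$ denote this finite set.

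Finally I would record the ``generalized decomposition data'' of $B$: an enumeration $\Irr(B)=\{\chi_1,\dots,\chi_k\}$, an enumeration $(u_1,b_1)=(1,B),(u_2,b_2),\dots,(u_m,b_m)$ of $S(B)$ with the trivial subsection first, an enumeration of each $\IBr(b_j)$, and the resulting list of matrices $\big(Q_{(u_j,b_j)}\big)_{j=1}^m$. By the two previous steps $k,m\le\beta$, each $Q_{(u_j,b_j)}$ is a $k\times l_j$ matrix with $l_j\le\beta$ and all entries in $E_d$, so there are only finitely many possible data lists, say $N=N(p,d)$ of them. If two $p$-blocks $B,B'$ of defect $d$ give rise to the same data list, then under the induced identifications relation \eqref{gendecrel} holds with $T$ the identity matrix (a signed permutation matrix), with each $S_{(u_j,b_j)}$ the identity matrix in $\GL(l_j,\ZZ)$, and with the trivial subsections corresponding; hence \autoref{HW} shows that $B$ and $B'$ are perfectly isometric. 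Consequently there are at most $N$ perfect isometry classes of $p$-blocks of defect $d$. Beyond \autoref{HW} the external inputs are the classical bounds $k(B)\le\beta(p,d)$ (Brauer--Feit), $c^b_{\phi\mu}\le\lvert D_b\rvert$ for Cartan invariants, and $\sum_{(u,b)\in S(B)}l(b)=k(B)$; marshalling these correctly is the only mildly delicate point, the remainder being bookkeeping.
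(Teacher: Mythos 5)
Your argument has a genuine gap at the step ``by the well-known bound on Cartan invariants $c^b_{\phi\phi}\le\lvert D_b\rvert\le p^d$''. This is not a theorem: bounding the Cartan invariants of a block of defect $d$ by $p^d$ is one of Brauer's open problems. What is actually known is only that the largest \emph{elementary divisor} of $C_b$ is $p^{d_b}$, which bounds $\det C_b$ in terms of $d$ but says nothing directly about the size of the individual entries of $C_b$ (a positive definite integral matrix with fixed determinant and dimension can have arbitrarily large diagonal entries before reduction). Consequently the estimate $\sum_{\chi}\lvert d^u_{\chi\phi}\rvert^2=c^b_{\phi\phi}\le p^d$ is unjustified, your bound $\lvert d^u_{\chi\phi}\rvert\le p^{d/2}$ (and the same for all Galois conjugates) collapses, and with it the finiteness of your set $E_d$.

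The paper sidesteps exactly this issue by first changing the basic set. Since $l(b)\le k(B)$ is bounded by Brauer--Feit and $\det C_b$ is bounded via the elementary divisors, reduction theory of positive definite integral quadratic forms produces $S\in\GL(l(b),\ZZ)$ such that the entries of $S^\text{t}C_bS$ are bounded in terms of $d$. Passing from $\IBr(b)$ to the corresponding basic set replaces $Q_{(u,b)}$ by $Q_{(u,b)}S$, and \emph{then} the orthogonality relation $(Q_{(u,b)}S)^\text{t}\,\overline{Q_{(u,b)}S}=S^\text{t}C_bS$ has bounded right-hand side, so the entries of $Q_{(u,b)}S$ and all their Galois conjugates are bounded and your finiteness argument goes through verbatim. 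This is precisely why the $S_{(u,b)}\in\GL(l(b),\ZZ)$ appear in \autoref{HW}: without the freedom to change basic sets the count would fail. The remainder of your bookkeeping --- Brauer--Feit for $k(B)$, the relation $\sum_{(u,b)\in S(B)}l(b)=k(B)$ to bound $\lvert S(B)\rvert$, the fact that entries lie in $\QQ_{p^d}$, and the Galois-conjugate trick using that $\QQ_{p^d}$ is CM --- is correct and in the same spirit as the paper; only the basic-set reduction is missing, and it is the crux of the argument.
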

\begin{proof}
This is a consequence of Brauer~\cite[Theorem~8]{BrauerApp1} and \autoref{HW}. For the convenience of the reader we sketch the details. Let $B$ be a block of defect $d$ with subsection $(u,b)$. Let $d_b$ be the defect of $b$, and let $C_b$ be the Cartan matrix of $b$. Then $d_b\le d$ and every elementary divisor of $C_b$ divides $p^{d_b}$.
By the well-known Brauer--Feit bound we have $l(b)\le k(B)\le p^{d^2}$. In particular, $\det C_b$ is bounded in terms of $d$. 
By the reduction theory of quadratic forms, there exist only finitely many equivalence classes of positive definite quadratic forms with given dimension and determinant (discriminant). This means that there exists $S\in\GL(l(b),\ZZ)$ such that all entries of $S^\text{t}C_bS$ are bounded in terms of $d$. The generalized decomposition numbers $d_{\chi\phi}^u$ are algebraic integers in $\QQ_{p^d}$. Since the size of the generalized decomposition matrix $Q_{(u,b)}\in\CC^{k(B)\times l(b)}$ is bounded in terms of $d$, there are only finitely many solutions of the matrix equation
\[Q_{(u,b)}^\text{t}\overline{Q_{(u,b)}}=S^\text{t}C_bS\]
Now \autoref{HW} applies.
\end{proof}

We illustrate \autoref{finitetype} with an example by Kiyota~\cite{Kiyota} which did not appear in this generality before.

\begin{Prop}\label{3x3}
Every $3$-block of defect $2$ is perfectly isometric to one of the following blocks:
\begin{enumerate}[(i)]
\item the principal block of $C_9$ or of $D_{18}$,
\item the principal block of $H\le\AGammaL(1,9)\cong (C_3\times C_3)\rtimes SD_{16}$,
\item the non-principal block of a double cover of $S_3\times S_3$. 
\item the non-principal block of a double cover of $S_3\wr C_2$.
\item\label{last} a non-principal block $B$ with $k(B)=3$.
\end{enumerate}
There are 13 or 14 perfect isometry classes of such blocks depending on whether case \eqref{last} occurs.
\end{Prop}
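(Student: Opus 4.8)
The plan is to enumerate all $3$-blocks of defect $2$ up to perfect isometry by working through the possible structures of the Brauer tree data at the relevant subsections, then invoking \autoref{HW} to cut down the classification. Since a defect group $D$ of such a block $B$ has order $9$, we have $D\cong C_9$ or $D\cong C_3\times C_3$. The inertial quotient $E$ is a $p'$-subgroup of $\Aut(D)$; for $D\cong C_9$ we get $e:=|E|\in\{1,2,6\}$ (dividing $\phi(9)=6$ and coprime to $3$), and for $D\cong C_3\times C_3$ the subgroup $E\le\GL(2,3)$ is a $3'$-group, so $|E|\in\{1,2,3,4,6,8,16\}$ up to conjugacy — here I would list the conjugacy classes of $3'$-subgroups of $\GL(2,3)$ explicitly (cyclic of orders $1,2,4,8$, the Klein four group, $C_6$, $SD_{16}$, etc.) since this governs the local structure.

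First I would treat the cyclic case $D\cong C_9$: here the theory of blocks with cyclic defect groups (already invoked in the proof of the theorem on $\PI(B)$ for cyclic $D$) gives that $B$ is perfectly isometric to its Brauer correspondent in $\N_G(D)$, hence to the unique block of $C_9\rtimes E$; this yields the principal block of $C_9$ ($e=1$), of $D_{18}$ ($e=2$), and of $\AGammaL(1,9)$-type groups or rather $C_9\rtimes C_6$ ($e=6$) — but one must check $C_9\rtimes C_6$ lands in case (ii) as $H\le\AGammaL(1,9)$ (it does not; rather $e=6$ for $C_9$ gives a block perfectly isometric to a block in the $C_3\times C_3$ list, so care is needed about which of the $13$/$14$ classes coincide). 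For $D\cong C_3\times C_3$ abelian, Broué's abelian defect conjecture is known in this case (it follows from the solubility of $\N_G(D)/\C_G(D)$ via Dade or from explicit verification), so $B$ is perfectly isometric to its Brauer correspondent in $\N_G(D)$, i.e. to a block of $(C_3\times C_3)\rtimes E$; running through the finitely many $E$ gives the principal blocks of the subgroups of $\AGammaL(1,9)\cong(C_3\times C_3)\rtimes SD_{16}$ in case (ii), plus the twisted cases (iii) and (iv) which arise when $E$ acts with a nontrivial $2$-cocycle (Külshammer's classification of blocks with normal abelian defect group and the fact that $\cohom^2(E,\CC^\times)\ne 1$ for $E$ containing a Klein four subgroup produces the "double cover" blocks of $S_3\times S_3$ and $S_3\wr C_2$).

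The remaining case (v) — the non-principal block with $k(B)=3$ — is the genuinely exceptional one: this is Kiyota's example, a block of defect $2$ that is \emph{not} perfectly isometric to any principal or twisted block of the above shape, and whose generalized decomposition matrix still satisfies the finiteness constraints of \autoref{finitetype}. Here I would invoke \autoref{HW} directly: one shows the data $(Q_{(u,b)})$ for this block is forced (up to the allowed transformations $QS=TQ'$) by the arithmetic constraints — $k(B)=3$, the Cartan matrices $C_b$ of the subsection blocks have bounded determinant dividing $9$, and the generalized decomposition numbers lie in $\QQ_3$ — and that the resulting perfect isometry class is distinct from the thirteen already found (by \autoref{decomp}, e.g. via $k_i(B)$, $l(B)$, or the elementary divisors of the Cartan matrix). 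The main obstacle is the bookkeeping in this last step: verifying that Kiyota's block does occur (i.e. producing an actual group realizing it, or citing \cite{Kiyota} for existence) and that it is the \emph{only} additional class — equivalently, that the matrix equation $Q_{(u,b)}^{\textrm{t}}\overline{Q_{(u,b)}}=S^{\textrm{t}}C_bS$ from \autoref{finitetype}, together with (sep) and (int) checked via \autoref{HW}, has exactly the claimed solution set. Counting "$13$ or $14$" reflects precisely that case (v)'s existence is the one point where the classification is conditional, so the proof should make clear which twelve/thirteen cases are unconditional and isolate the status of (v).
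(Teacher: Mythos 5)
Your overall strategy (reduce to the inertial quotient, work $D\cong C_9$ and $D\cong C_3\times C_3$ separately, then invoke \autoref{HW}) matches the paper's framework, but there is a fatal circularity in the $C_3\times C_3$ branch. You claim ``Broué's abelian defect conjecture is known in this case,'' so that $B$ is perfectly isometric to its Brauer correspondent in $\N_G(D)$. It is not: the Koshitani--Kunugi result covers only \emph{principal} $3$-blocks with $D\cong C_3\times C_3$, and the paper explicitly flags the general $C_3\times C_3$ case as still open (indeed it writes ``The case $D\cong C_3\times C_3$ is still open (cf.\ \autoref{3x3})!''). Using Broué's conjecture here would prove more than is known and would not be a valid step. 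The paper's actual argument routes around this: it cites \cite[Theorem~3]{SambaleBroue} to reduce the determination of the perfect isometry class to the pair $(k(B),l(B))$, and then invokes the Kiyota--Watanabe classification of the possible $(k(B),l(B))$ for each inertial quotient $T(B)\le\GL(2,3)$ — no global Broué-conjecture input is needed or available.

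Two further concrete problems. First, your inertial-index inventory is wrong: for $D\cong C_9$ the inertial quotient is a $3'$-subgroup of $\Aut(C_9)\cong C_6$, hence of order $1$ or $2$ only (you include $6$, which is divisible by $3$); for $D\cong C_3\times C_3$ the inertial quotient is a $3'$-subgroup of $\GL(2,3)$ (so of order dividing $16$, contained up to conjugacy in the semidihedral Sylow $2$-subgroup), and your list includes $3$ and $6$ which are not $3'$. The subsequent speculation that ``$e=6$ for $C_9$ gives a block perfectly isometric to a block in the $C_3\times C_3$ list'' starts from a nonexistent case. Second, the heart of the proposition — producing exactly $13$ classes unconditionally, identifying which inertial quotients admit a second ``twisted'' class via a nontrivial Schur multiplier (precisely $T(B)\cong C_2\times C_2$ and $T(B)\cong D_8$, yielding the double covers of $S_3\times S_3$ and $S_3\wr C_2$), isolating the possibly-nonexistent $(k,l)=(3,2)$ class, and then proving pairwise non-isometry (including the delicate $(k,l)=(6,2)$ triple, which the paper separates using the symmetry or non-symmetry of the stable center $\overline{\Z}(B)$ via \cite[Theorem~3.1]{FrobeniusInertial}) — is only gestured at in your proposal. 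Without that bookkeeping, and without the correct local inputs, the count ``$13$ or $14$'' and the pairwise distinctness remain unproved.
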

\begin{proof}
Let $B$ be a block of $G$ with defect group $D$ of order $9$.
In view of \autoref{HW}, it suffices to determine the matrices $Q_{(u,b)}$ up to basic sets. Since this is a tedious task, we will cite some results. 
Let $\beta$ be a Brauer correspondent of $B$ in $\C_G(D)$. Then $T(B):=\N_G(D,\beta)/\C_G(D)\le\Aut(D)$ is the inertial quotient of $B$, and $T(B)$ is a $3'$-group. If $D$ is cyclic, then $|T(B)|\le 2$ and the result follows for instance from Usami~\cite{Usami23I}. Now let $D\cong C_3\times C_3$. Then $\Aut(D)\cong\GL(2,3)$ and $T(B)\le\GammaL(1,9)\cong SD_{32}$ (semidihedral group). By Sambale~\cite[Theorem~3]{SambaleBroue}, it suffices to determine the possible pairs $(k(B),l(B))$. This was done mostly by Kiyota~\cite{Kiyota} and Watanabe~\cite{WatanabeSD16}. In particular, if $T(B)\not\cong C_2\times C_2$ and $|T(B)|\ne 8$, then $B$ is perfectly isometric to the principal block of $D\rtimes T(B)$. For $T(B)\cong C_2\times C_2$ and $T(B)\cong D_8$ there is a second possibility (apart from $D\rtimes T(B)$) given by the non-principal block of a double cover of $D\rtimes T(B)$. Finally, if $T(B)\in\{C_8,Q_8\}$ we have $(k(B),l(B))\in\{(9,8),(6,5),(3,2)\}$. If $k(B)\ne 3$, then $B$ is perfectly isometric to $D\rtimes C_8$ or to $D\rtimes Q_8$ (we do not know if $T(B)$ determines which case occurs). On the other hand, we do not know if $k(B)=3$ can actually occur. An easy analysis of the decomposition numbers shows that there are no principal blocks of that form (see \cite[Proposition~15.7]{habil}).

Counting subgroups $T(B)$ of $\GL(2,3)$ only up to conjugation, we have constructed $13$ (or $14$) perfect isometry classes. It remains to show that these are pairwise not perfectly isometric. 
In most cases the class is uniquely identified by the pair $(k(B),l(B))$ according to \autoref{decomp}. 
However, there are two exceptions. If $(k(B),l(B))=(9,1)$, then $T(B)=1$ and $D\in\{C_9,C_3\times C_3\}$. Here $B$ is nilpotent and \autoref{abel} applies. Now suppose that $(k(B),l(B))=(6,2)$. Then there are three choices: $(D,T(B))\in\{(C_9,C_2),(C_3\times C_3,C_2),(C_3\times C_3,D_8)\}$. As mentioned earlier, a perfect isometry preserves the stable center $\overline{\Z}(B)$ of $B$. In the first two cases $T(B)$ acts semiregularly on $D\setminus\{1\}$ and \cite[Theorem~3.1]{FrobeniusInertial} shows that $\overline{\Z}(B)$ is a symmetric algebra. On the other hand, $D_8$ can never act semiregularly and therefore in the third case $\overline{\Z}(B)$ is not symmetric. Hence, we are left with $|T(B)|=2$ and $D\in\{C_9,C_3\times C_3\}$. These blocks can be distinguished with the character table and we leave the details to the reader.
\end{proof}

Finally we address the converse of \autoref{HW}.
Suppose that $B$ and $B'$ are perfectly isometric. Then we know from \autoref{decomp} that the ordinary decomposition matrices satisfy \eqref{gendecrel} in \autoref{HW}. In the case of nilpotent blocks, also the generalized decomposition matrices satisfy \eqref{gendecrel}, because these matrices form the character table of the defect group and \autoref{hertweck} applies (see \cite[Lemma~10]{SambaleC4}). 

In general, let $Q_*=(Q_{(u,b)})_{(u,b)\in S(B)}\in\CC^{k(B)\times k(B)}$ and similarly $Q'_*$. Let $\widehat{\Z}(B)$ be the ideal of $\Z(B)$ generated by the elements $|G|/\chi(1)e_\chi\in\Z(B)$ ($\chi\in\Irr(B)$). Then by \autoref{center} there exists an isomorphism of $\mathcal{O}$-algebra $\Z(B)\to\Z(B')$ sending $\widehat{\Z}(B)$ to $\widehat{\Z}(B')$. Let $\mathcal{D}_{k(B)}(\mathcal{O})$ be the set of diagonal matrices in $\mathcal{O}^{k(B)\times k(B)}$. By Puig~\cite{Puigcenter}, there exists an isomorphism of $\mathcal{O}$-algebras
\[
Q_*\mathcal{O}^{k(B)\times k(B)}Q_*^{-1}\cap \mathcal{D}_{k(B)}\to Q'_*\mathcal{O}^{k(B)\times k(B)}(Q'_*)^{-1}\cap \mathcal{D}_{k(B)}
\]
sending $Q_*\mathcal{O}^{k(B)\times k(B)}\cap\mathcal{D}_{k(B)}(\mathcal{O})$ to $Q'_*\mathcal{O}^{k(B)\times k(B)}\cap\mathcal{D}_{k(B)}(\mathcal{O})$ (note that Puig uses the transpose of $Q_*$). 
The identity is such an isomorphism whenever $Q_*S=TQ'_*$ for $S\in\GL(k(B),\mathcal{O})$ and $T\in\GL(k(B),\mathcal{O})\cap\mathcal{D}_{k(B)}(\mathcal{O})$. This is of course more general than the matrices $(S_{(u,b)})_{(u,b)\in S(B)}$ and $T$ coming from \autoref{HW}. 
Nevertheless, we do not know if the converse of \autoref{HW} still holds, that is, if perfectly isometric blocks have the “same” generalized decomposition matrices. To fill this lack of knowledge, one replaces perfect isometries by isotypies.

\section{Isotypies}
In order to define isotypies we need to recall some terminology about fusion in blocks which was first introduced by Alperin--Broué~\cite{LocalMethods}. Let $D$ be a defect group of $B$. A \emph{$B$-subpair} is a pair $(Q,b_Q)$ such that $Q\le D$ and $b_Q$ is a Brauer correspondent of $B$ in $\C_G(Q)$. In particular, a subsection $(u,b)$ induces a subpair $(\langle u\rangle,b)$. 
In the case $Q=D$ we speak of \emph{Sylow} subpairs. 
For a given Sylow $B$-subpair $(D,b_D)$ one defines a partial ordering such that for every $Q\le D$ there exists just one $B$-subpair $(Q,b_Q)\le(D,b_D)$ (see \cite[p. 219]{Navarro}).

The following definition is a bit more general than \cite[Définition~4.3]{Broue} in the sense that we do not require that $B$ and $B'$ have the same defect group and fusion system.

\begin{Def}\label{isotypy}
Let $(P,b_P)$ be a Sylow $B$-subpair and let $(Q,b_Q)$ be a Sylow $B'$-subpair. Let $S(B)$ \textup{(}resp. $S(B')$\textup{)} be a set of representatives for the $G$-conjugacy classes of $B$-subsections $(u,b)\in(P,b_P)$. For $(u,b)\in S(B)$ let $d^u:\ZZ\Irr(B)\to\ZZ\IBr(b)$, $\chi\mapsto\sum_{\phi\in\IBr(b)}d^u_{\chi\phi}\phi$. Then $B$ and $B'$ are called \emph{isotypic} if there exists a bijection $S(B)\to S(B')$, $(u,b)\mapsto (u',b')$ such that the following holds:
\begin{itemize}
\item $(1_G',B')=(1_H,B')$,
\item for every $(u,b)\in S(B)$ there exists a perfect isometry $I^u:\ZZ\Irr(b)\to\ZZ\Irr(b')$ such that $I^u\circ d^u=d^{u'}\circ I^1$ (we regard $\ZZ\IBr(b)$ as subset of $\QQ\Irr(b)$ by setting $0$ on the $p$-singular elements),
\item if $\langle u\rangle=\langle v\rangle$, then $I^u=I^v$. 
\end{itemize}
\end{Def}

Note that if $(u,b),(u,c)\in S(B)$ such that $\langle u\rangle=\langle v\rangle$, then $b=c$, since $b$ is the only block such that $(\langle u\rangle,b)\le (P,b_P)$. Hence, the last part of \autoref{isotypy} is meaningful.
Using \autoref{equiv} it is easy to see that isotypy is an equivalence relation.

\begin{Ex}
It is clear that every isotypy between $B$ and $B'$ gives a perfect isometry $I^1:\ZZ\Irr(B)\to\ZZ\Irr(B')$.
Conversely, not every perfect isometry can be extended to an isotypy. Consider for instance nilpotent blocks $B$ and $B'$ with defect group $D_8$ and $Q_8$ respectively. There exists a $B$-subsection $(u,b)$ such that $b$ has defect group $C_2\times C_2$. However there is no such $B'$-subsection. Hence by \autoref{abel}, there is no perfect isometry between $b$ and any $B'$-subsection. 
\end{Ex}

\begin{Prop}[Broué~{\cite[Théor{\`e}me~4.8]{Broue}}]\label{isodec}
If $B$ and $B'$ are isotypic, then the generalized decomposition matrices satisfy 
\[Q_{(u,b)}S_{(u,b)}=TQ_{(u',b')}\]
for every $B$-subsection $(u,b)$ as in \autoref{HW}. 
\end{Prop}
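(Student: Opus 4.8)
The plan is to unwind the definition of isotypy and show that the perfect isometries $I^u$ at each subsection, when written in the signed-permutation form, automatically produce the matrix relation $Q_{(u,b)}S_{(u,b)} = TQ_{(u',b')}$ with a \emph{uniform} $T$ independent of $(u,b)$. The key observation is that the condition $I^u \circ d^u = d^{u'} \circ I^1$ is exactly a commutativity of four linear maps, and translating each map into its matrix with respect to the natural bases $\Irr(B)$, $\IBr(b)$, $\Irr(B')$, $\IBr(b')$ turns this into a product of four matrices.

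First I would set up notation: write the perfect isometry $I^1 \colon \ZZ\Irr(B) \to \ZZ\Irr(B')$ in the form $I^1(\chi) = \epsilon_\chi \widehat{I^1}(\chi)$ as in the paragraph following \autoref{broue}, so that its matrix with respect to the bases $\Irr(B)$ and $\Irr(B')$ is a signed permutation matrix; call it $T \in \GL(k(B),\ZZ)$. This $T$ is what will appear in the conclusion, and it is manifestly the same for all subsections since $I^1$ is fixed by the isotypy data. Next, the map $d^u \colon \ZZ\Irr(B) \to \ZZ\IBr(b)$ has matrix $Q_{(u,b)}^{\mathrm t}$ (its columns are indexed by $\Irr(B)$, rows by $\IBr(b)$, with entries $d^u_{\chi\phi}$), and likewise $d^{u'}$ has matrix $Q_{(u',b')}^{\mathrm t}$. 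For the perfect isometry $I^u \colon \ZZ\Irr(b) \to \ZZ\Irr(b')$ one does \emph{not} directly get a map on Brauer characters, but $I^u$ restricted to the sublattice $\ZZ\IBr(b) \subseteq \QQ\Irr(b)$ (as stipulated in \autoref{isotypy}, using the convention of extending Brauer characters by zero on $p$-singular elements) maps into $\QQ\IBr(b')$, and by \autoref{decomp} applied to $I^u$ — or more directly by \eqref{alter} — it in fact maps $\ZZ\IBr(b)$ to $\ZZ\IBr(b')$; this gives a matrix $S_{(u,b)} \in \GL(l(b),\ZZ)$ (its invertibility following because $(I^u)^{-1}$ is also a perfect isometry, again by \autoref{equiv}, yielding an integral inverse). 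Here one should take care to identify $S_{(u,b)}$ precisely: it is the matrix of $I^u|_{\ZZ\IBr(b)}$ expressed in the bases $\IBr(b)$ and $\IBr(b')$, so that $I^u(\phi) = \sum_{\mu} (S_{(u,b)})_{\phi\mu}\,\mu$ — this is the same $S_{(u,b)}$ that appeared in the proof of \autoref{HW} where $\phi' := \sum_\mu s_{\phi\mu}\mu$.

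With these identifications, the equation $I^u \circ d^u = d^{u'} \circ I^1$ of linear maps $\ZZ\Irr(B) \to \ZZ\IBr(b')$ becomes the matrix identity $S_{(u,b)}^{\mathrm t}\, Q_{(u,b)}^{\mathrm t} = Q_{(u',b')}^{\mathrm t}\, T^{\mathrm t}$ — one must be slightly careful about the order of composition versus matrix multiplication and about transposes, but this is a routine bookkeeping check. Transposing gives $Q_{(u,b)} S_{(u,b)} = T\, Q_{(u',b')}$, which is exactly the asserted relation, with the \emph{same} signed permutation matrix $T$ for every subsection (in particular for $(1,B)$, where it recovers the relation from \autoref{decomp}, consistently since $I^1$ is the underlying perfect isometry and $(1,B) \mapsto (1,B')$). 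Finally one checks $S_{(u,b)} \in \GL(l(b),\ZZ)$: integrality in both directions was noted above, so $\det S_{(u,b)} = \pm 1$.

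The main obstacle I anticipate is purely technical rather than conceptual: getting the convention for $I^u$ on the Brauer-character lattice exactly right, and in particular verifying that $I^u(\ZZ\IBr(b)) \subseteq \ZZ\IBr(b')$ with an \emph{integral} change-of-basis matrix. The cleanest argument is to invoke \autoref{decomp} for the perfect isometry $I^u \colon \ZZ\Irr(b) \to \ZZ\Irr(b')$, which already provides the integral matrices $S'$ (for Brauer characters, equivalently for projective indecomposables) and a signed permutation matrix $T'$ with $Q_b S' = T' Q_{b'}$, and then to match the "Brauer character" interpretation of $I^u$ used in \autoref{isotypy} with this $S'$; this identification uses the relation $\phi = \sum_\chi d_{\chi\phi}\, \widetilde{\chi}$-type formulas and the fact that $I^u$ commutes with taking decomposition matrices. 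Everything else is linear algebra and careful indexing.
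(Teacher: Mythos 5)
Your proposal is correct and follows essentially the same route as the paper: translate the isotypy condition $I^u\circ d^u = d^{u'}\circ I^1$ into matrices with respect to the bases $\Irr(B)$, $\IBr(b)$, $\Irr(B')$, $\IBr(b')$, with the signed-permutation matrix $T$ of $I^1$ independent of the subsection, and then reduce the remaining issue to showing $I^u(\ZZ\IBr(b))\subseteq\ZZ\IBr(b')$ with a unimodular matrix — which both you and the paper obtain from \autoref{decomp}. One small imprecision: \eqref{alter} alone only yields that $I^u(\phi)$ vanishes on $p$-singular elements, i.e.\ $I^u(\phi)\in\QQ\IBr(b')$; the integrality of the coefficients genuinely requires \autoref{decomp}, and the paper extracts it via the biorthogonality $[\phi,\Phi_\nu]^0=\delta_{\phi\nu}$ applied to $(I^u)^{-1}(\Phi_{\mu'})\in\ZZ\Phi_{\IBr(b)}$. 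Also, the matrix expressing $I^u$ on the Brauer-character lattice and the one expressing it on the projective-indecomposable lattice are not the same but transpose-inverses of each other (both lying in $\GL(l(b),\ZZ)$), so ``equivalently for projective indecomposables'' should be read as ``both are unimodular,'' not ``both are given by the same matrix.''
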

\begin{proof}
For $(u,b)\in S(B)$ and $\chi\in\Irr(B)$ we have 
\begin{equation}\label{stuff}
\sum_{\phi\in\IBr(b)}d_{\chi\phi}^uI^u(\phi)=\sum_{\mu'\in\IBr(b')}d_{I(\chi)\mu'}^{u'}\mu'.
\end{equation}
For $\mu'\in\IBr(b')$ let $\Phi_{\mu'}=\sum_{\psi\in\Irr(b')}d_{\psi\mu'}\psi$ be the corresponding indecomposable projective character. It follows from \autoref{decomp} that $(I^u)^{-1}(\Phi_{\mu'})$ is an integral linear combination of $\Phi_\phi$ for $\phi\in\IBr(b)$. Hence, 
\[[I^u(\phi),\Phi_{\mu'}]^0=[I^u(\phi),\Phi_{\mu'}]=[\phi,(I^u)^{-1}(\Phi_{\mu'})]=[\phi,(I^u)^{-1}(\Phi_{\mu'})]^0\in\ZZ\] for $\phi\in\IBr(b)$. 
This shows $I^u(\phi)=\sum_{\mu'\in\IBr(b')}a_{\phi\mu'}\mu'\in\ZZ\IBr(b')$. From \eqref{stuff} we obtain \[d_{I(\chi)\mu'}^{u'}=\sum_{\phi\in\IBr(b)}a_{\phi\mu'}d^u_{\chi\phi}.\] 
Now the claim follows easily (cf. proof of \autoref{decomp}).
\end{proof}

Let us compare isotypies with Brauer's notion~\cite{Brauertypes} of the \emph{type} of a block: $B$ and $B'$ are of the same type if for every $(u,b)\in S(B)$ we have
\begin{itemize}
\item $Q_{(u,b)}S_{(u,b)}=TQ_{(u',b')}$ where $T$ is a permutation matrix not depending on $(u,b)$ and $S_{(u,b)}\in\GL(l(b),\ZZ)$,

\item $Q^bS_{(u,b)}=T^uQ^{b'}$ where $Q^b$ (respectively $Q^{b'}$) is the (ordinary) decomposition matrix of $b$ (respectively $b'$) and $T^u$ is a permutation matrix.
\end{itemize}
The main difference is the absence of signs (compared to \autoref{isodec}). 

\section{Broué's conjecture}

The following conjecture is probably the main motivation to study perfect isometries.

\begin{Con}[{Broué~\cite[Conjecture~6.1]{Broue}}]\label{brouec}
If $B$ has abelian defect group $D$, then $B$ is isotypic to its Brauer correspondent $b_D$ in $\N_G(D)$.
\end{Con}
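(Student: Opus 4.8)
The plan is to follow the two-tier strategy by which \autoref{brouec} is attacked in the literature: first reduce the problem, as far as possible, to quasi-simple groups, and then verify it family by family. Passing to a minimal counterexample $(G,B)$ with abelian defect group $D$, I would use the Fong--Reynolds correspondence and the theory of blocks of normal subgroups (\autoref{ex1}) together with Fong's reductions for $p$-solvable groups to constrain the structure of $G$; in particular, if $B$ turns out to be nilpotent then it is already isotypic to its normal-defect Brauer correspondent by the Broué--Puig theorem (\autoref{ex1}\eqref{nil}) and \autoref{hertweck}, so one may assume $B$ is non-nilpotent and quasi-primitive. One is then led to the case where $G$ is quasi-simple (or a central extension of a simple group), provided an appropriate equivariant ``inductive Broué'' condition holds along the covering blocks — and here lies the first genuine gap, since no complete reduction theorem of this type is available at present. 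What can be said is that a reduction is known under such a hypothesis, so the conjecture would follow once the quasi-simple case is settled together with the requisite automorphism equivariance.

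For blocks of small defect the conjecture is known. When $D$ is cyclic, \cite[Théorème~5.3]{Broue} — already invoked in the proof of the structure theorem for $\PI(B)$ above — produces a perfect isometry between $B$ and $b_D$ matching exceptional with exceptional characters; an additional argument on subsections, using that each $B$-subsection $(u,b)$ has cyclic defect group and that the Brauer tree of $b$ embeds into that of $B$, upgrades this to a genuine isotypy in the sense of \autoref{isotypy}. For $D$ of rank $2$, say $D\cong C_p\times C_p$, one combines the inductive gluing method of Usami and Puig with the classification of the possible fusion systems on $D$; \autoref{3x3} already illustrates the bookkeeping this requires for $p=3$ and $d=2$, and \autoref{finitetype} guarantees that only finitely many classes occur for each fixed defect.

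For the quasi-simple groups I would split along the classification of finite simple groups. Groups of Lie type in defining characteristic have Sylow-$p$ defect groups, hence abelian only in small cases, which are checked directly (often the block is nilpotent). For symmetric and alternating groups one uses the $\mathfrak{sl}_2$-categorification of Chuang and Rouquier, which gives derived equivalences — hence isotypies — between abelian defect blocks of $S_n$ (necessarily of weight less than $p$) and their Brauer correspondents, together with Enguehard's perfect isometries (\autoref{ex1}, \cite{EnguehardSn,BrunatGramain}) and Marcus's Clifford-theoretic descent to the double covers and to $A_n$. For the sporadic groups and their covering groups, the finitely many relevant blocks (again bounded via \autoref{finitetype}) are treated one by one from their known character tables and decomposition matrices.

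The principal obstacle is the case of groups of Lie type in non-defining characteristic. There the condition that $D$ be abelian translates, via Cabanes--Enguehard block theory, into a condition on the regular number $d=d_\ell(q)$ forcing a Sylow $\Phi_d$-torus to contain a defect group, and the desired isotypy between $B$ and $\N_G(D)$ must be extracted from Lusztig's classification of $\Irr(G)$ and the (partly conjectural) $d$-Harish-Chandra theory of Broué--Malle--Michel: one needs a bijection between the characters of $B$ and of $b_D$ induced by Deligne--Lusztig twisted induction, matching $d$-cuspidal pairs on the two sides, and then to check that this bijection respects the generalized decomposition matrices at every subsection (\autoref{isodec}) with the correct signs. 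Controlling these signs and the local data at non-trivial subpairs — where the relevant subquotients again involve Hecke algebras of relative Weyl groups — is exactly the part that remains open in general, although it is known for general linear and unitary groups, for unipotent blocks in a range of cases, and for various small-rank groups. I would therefore expect a proof along these lines to be complete only modulo this Lie-type input, which is at present the outstanding unsolved component of \autoref{brouec}.
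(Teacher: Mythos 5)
This is a conjecture, not a theorem: the paper (and the field) does not prove \autoref{brouec}, and indeed the text explicitly notes that already the case $D\cong C_3\times C_3$ is open. So there is no proof in the paper against which your attempt could be compared, and you cannot be faulted for failing to supply one. What the paper does is list the families of groups and local configurations for which the conjecture is known, which is exactly what a survey should do.

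Your write-up is, in effect, a parallel survey plus a sketch of a reduction-to-quasi-simple strategy, and you are commendably honest that both the reduction theorem and the Lie-type-in-non-defining-characteristic input are missing. A few calibration remarks are still worth making. First, no inductive ``Broué condition'' with a corresponding reduction theorem is currently known for isotypies (unlike for the McKay, Alperin--McKay and Alperin weight conjectures), so the very first step of your plan is not merely ``a gap'' but a piece of machinery that does not yet exist; your phrasing ``a reduction is known under such a hypothesis'' overstates the state of the art. Second, for $S_n$ and $A_n$ the Chuang--Rouquier derived equivalences do yield Broué's abelian-defect conjecture at the derived level, but extracting an isotypy in the precise sense of \autoref{isotypy} (with compatible local perfect isometries at every subsection) still requires the extra work done by Rouquier, Fong--Harris and Livesey, which you gesture at correctly. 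Third, the cyclic-defect case is genuinely settled as an isotypy (not just a perfect isometry), so your ``additional argument on subsections'' is really Broué's own \cite[Th\'eor\`eme~5.3]{Broue} together with the fact that the local blocks $b$ at non-trivial subsections are nilpotent with cyclic defect, hence handled by the nilpotent case. In short, your proposal is a reasonable and self-aware road map but not a proof, which is the correct state for this statement: it remains open, and the honest conclusion is that one cannot prove it at present.
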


In the situation of \autoref{brouec}, the blocks $B$ and $b_D$ have the same defect group and the same fusion system. In fact the fusion system is controlled by the inertial quotient $T(B):=\N_G(D,\beta)/\C_G(D)$ where $\beta$ is a Brauer correspondent of $B$ (and of $b_D$) in $\C_G(D)$. Since $T(B)$ is a $p'$-group, 
\[D=[T(B),D]\times\C_{T(B)}(D).\] 
Let $\mathcal{R}$ be a set of representatives for the $T(B)$-orbits on $[T(B),D]$. 
Then the set $S(B)$ in \autoref{isotypy} can be defined by $\{(uv,b_{uv}):u\in\mathcal{R},\,v\in\C_{T(B)}(D)\}$ where $b_{uv}=\beta^{\C_G(uv)}$ has defect group $D$ and $T(b_{uv})\cong\C_{T(B)}(u)$. Moreover, $S(b_D)$ is given by $\{(uv,b_{uv}')\}$ where $b_{uv}'=\beta^{\C_G(uv)\cap\N_G(D)}$ is the Brauer correspondent of $b_{uv}$ in $\C_G(uv)\cap\N_G(D)$. This makes it possible to work by induction on $|G|$.

\autoref{brouec} holds at least in the following cases:
\begin{itemize}
\item $G$ $p$-solvable (see \cite[Théor{\`e}me~5.5]{Broue})
\item $G=S_n$, $A_n$, $2.S_n$ or $2.A_n$ (see \cite[Théor{\`e}me~2.13]{RouquierPerfect} and \cite{FongHarrisAn,Livesey})
\item $G$ general linear, unitary or some symplectic group (see \cite{EnguehardGL,Riss,Livesey2})
\item $G$ (almost) (quasi)simple sporadic (see \cite{SambaleBroue})
\item $G$ quasisimple with exceptional Schur multiplier (see \cite{SambaleExc})
\item $B$ nilpotent (see \cite[Théor{\`e}me~5.2]{Broue})
\item $B$ principal and $p=2$ (see \cite{FongHarris})
\item $B$ principal and $D\cong C_3\times C_3$ (see \cite{KK})
\item $|T(B)|\le 4$ or $T(B)\cong S_3$ (see \cite{UsamiZ2Z2,UsamiZ4,Usami23I,UsamiD6})
\item $[T(B),D]$ cyclic (see \cite[Théor{\`e}me~5.3]{Broue} and \cite[Corollary]{WatanabeCycFoc})
\item $p=2$ and $D$ of rank $\le3$ (see \cite[Theorem~15]{SambaleC4})
\item $|D|=16$ (see \cite{EatonE16}) and most cases for $|D|=32$ (see \cite[Proposition~5.5]{Ardito})
\end{itemize}

The case $D\cong C_3\times C_3$ is still open (cf. \autoref{3x3})!

\begin{Ex}\hfill
\begin{enumerate}[(i)]
\item In the situation of \autoref{brouec} there is not always a perfect isometry with positive signs only: Let $B$ be the principal $2$-block of $G=A_5$. Then $D=V_4$ and $\N_G(D)=A_4$. Consequently, the character degrees of $B$ are $1,3,3,5$ and those of $b_D$ are $1,1,1,3$. Hence, for any bijection $I:\Irr(B)\to\Irr(b_D)$ we have 
\[\mu_I(1,1)=\sum_{\chi\in\Irr(B)}\chi(1)I(\chi)(1)\equiv 2\pmod{4}.\]
Nevertheless, $B$ and $b_D$ are perfectly isometric via $\chi_1\mapsto-\psi_1$ and $\chi_i\mapsto\psi_i$ for $i=2,3,4$ as can be seen from the character tables (use (int')):
\begin{align*}
\begin{array}{c|ccccc}
B&1&(12)(34)&(123)&(12345)&(13524)\\\hline
\chi_1&1&1&1&1&1\\
\chi_2&3&-1&.&\frac{1-\sqrt{5}}{2}&\frac{1+\sqrt{5}}{2}\\
\chi_3&3&-1&.&\frac{1+\sqrt{5}}{2}&\frac{1-\sqrt{5}}{2}\\
\chi_4&5&1&-1&.&.
\end{array}
&&
\begin{array}{c|cccc}
b_D&1&(12)(34)&(123)&(132)\\\hline
\psi_1&1&1&1&1\\
\psi_2&1&1&\frac{-1+\sqrt{3}i}{2}&\frac{-1-\sqrt{3}i}{2}\\
\psi_3&1&1&\frac{-1-\sqrt{3}i}{2}&\frac{-1+\sqrt{3}i}{2}\\
\psi_4&3&-1&.&.
\end{array}
\end{align*}
For a non-trivial $B$-subsection $(u,b)$ the block $b$ is nilpotent. The same holds for the $b_D$-subsections. Hence, one can show that $B$ and $b_D$ are isotypic.

\item In general, \autoref{brouec} does not hold for non-abelian $D$: If $B$ is the principal $2$-block of $G=S_4$, then $D\cong D_8$ and $l(B)=2$. On the other hand, the principal $2$-block $b_D$ of $\N_G(D)=D$ satisfies $l(b_D)=1$. Hence, $B$ and $b_D$ are not perfectly isometric according to \autoref{decomp}. Nevertheless, Rouquier~\cite[A.2]{RouquierStable} put forward a variant of Broué's conjecture for blocks with abelian hyperfocal subgroup. 
This has been settled for $p=2$ and $D$ metacyclic by Cabanes--Picaronny~\cite{Cabanesrev} (in combination with \cite[Theorem~8.1]{habil}). The case $p>2$ and $D$ non-abelian, metacyclic has been done more recently by Tasaka--Watanabe~\cite{TasakaWatanabe}. Similar case were considered in \cite{Holloway,WatanabePerfIso,HuZhou,Sambalemna3,Tasakahyper,Todea,WatanabeCycFoc,WatanabeCenHyp,WatanabeAWC}.
\end{enumerate}
\end{Ex}

\begin{Prop}\label{IN}
Suppose that \autoref{brouec} holds for $B$. Then there exists $\gamma\in\ZZ$ such that
\[|\{\chi\in\Irr(B):\chi(1)_{p'}\equiv\pm\gamma k\pmod{p}\}|=|\{\psi\in\Irr(b_D):\psi(1)_{p'}\equiv \pm k\pmod{p}\}|\]
for every $k\in\ZZ$.
\end{Prop}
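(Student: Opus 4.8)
The plan is to exploit \autoref{degcorr}, which says that a perfect isometry $I:\ZZ\Irr(B)\to\ZZ\Irr(B')$ forces $(I(\chi)(1)\psi(1))_{p'}\equiv(I(\psi)(1)\chi(1))_{p'}\pmod p$ for all $\chi,\psi\in\Irr(B)$. Under \autoref{brouec} we have (in particular) a perfect isometry $I:\ZZ\Irr(B)\to\ZZ\Irr(b_D)$ coming from the isotypy $I^1$, and $B$, $b_D$ share the same defect, so $I$ preserves heights by \autoref{decomp}. First I would fix one character $\psi_0\in\Irr_0(b_D)$ of height $0$ (so $\psi_0(1)_{p'}=\psi_0(1)$ is a unit mod $p$, being the $p'$-part of a number whose $p$-part is $p^{a-d}$) and set $\chi_0:=\widehat{I}^{-1}(\psi_0)$, which then also has height $0$. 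Define $\gamma\in\ZZ$ to be (a representative of) $(\chi_0(1)_{p'})(\psi_0(1)_{p'})^{-1}\bmod p$; note this makes sense since $\psi_0(1)_{p'}$ is invertible mod $p$.

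Next I would show that $\widehat I$ translates the congruence classes of $p'$-degrees by the single factor $\gamma$. For arbitrary $\chi\in\Irr(B)$ put $\psi:=\widehat I(\chi)$. Applying \autoref{degcorr} with the pair $(\chi_0,\chi)$ gives
\[
\bigl(I(\chi_0)(1)\,\chi(1)\bigr)_{p'}\equiv\bigl(I(\chi)(1)\,\chi_0(1)\bigr)_{p'}\pmod p,
\]
i.e. $(\psi_0(1)\,\chi(1))_{p'}\equiv(\psi(1)\,\chi_0(1))_{p'}\pmod p$. Since the $p'$-part is multiplicative, $\psi_0(1)_{p'}\,\chi(1)_{p'}\equiv\psi(1)_{p'}\,\chi_0(1)_{p'}\pmod p$, and because $\psi_0(1)_{p'}$ is a unit mod $p$ I can divide to get $\chi(1)_{p'}\equiv\gamma\,\psi(1)_{p'}\pmod p$. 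Thus $\widehat I$ carries the set $\{\psi\in\Irr(b_D):\psi(1)_{p'}\equiv k\pmod p\}$ bijectively onto $\{\chi\in\Irr(B):\chi(1)_{p'}\equiv\gamma k\pmod p\}$ for every $k$, and taking the union of the classes $k$ and $-k$ on both sides (note $\psi(1)_{p'}\equiv k$ iff $(-\psi)(1)_{p'}\equiv k$; the $\pm$ just symmetrises the counting set) yields the claimed equality of cardinalities.

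The only mildly delicate points, none of which I expect to be a real obstacle, are: (a) checking that $\psi_0$ of height $0$ indeed has $\psi_0(1)_{p'}\not\equiv 0\pmod p$, which is immediate from the height formula $\psi_0(1)_p=p^{a-d}$ where $p^a=|\N_G(D)|_p$ and $d$ is the common defect; (b) being careful that $\gamma$ depends only on the perfect isometry $I^1$ arising from the isotypy and not on the arbitrary choice of $\psi_0$ — in fact replacing $\psi_0$ by another height-$0$ character changes neither side, so one may simply fix one such choice; and (c) making sure the $\pm$ in the statement is handled by symmetrising, as above, since $\gamma k$ and $-\gamma k$ correspond to $k$ and $-k$. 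Everything else is a direct bookkeeping consequence of \autoref{degcorr} together with height-preservation from \autoref{decomp}.
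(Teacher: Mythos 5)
Your proposal is correct and follows essentially the same route as the paper: fix a single pair $(\chi_0,\psi_0)$ matched by the perfect isometry $I=I^1$ coming from the isotypy, define $\gamma$ by the congruence on that pair, and then invoke \autoref{degcorr} to propagate the congruence to every other pair. (The paper's own proof is a terse three lines doing exactly this.) One small expository slip: you write ``$\psi(1)_{p'}\equiv k$ iff $(-\psi)(1)_{p'}\equiv k$,'' but of course $(-\psi)(1)_{p'}=-\psi(1)_{p'}\equiv -k$; the real point you need --- and which your final sentence does correctly express --- is that the sign $\epsilon_I(\chi)\epsilon_I(\chi_0)$ coming from the perfect isometry only moves $k$ to $-k$ and $\gamma k$ to $-\gamma k$, so the symmetrised sets $\{\pm k\}$ and $\{\pm\gamma k\}$ are matched bijectively by $\widehat I$. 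Also note that $\psi_0(1)_{p'}$ is a unit mod $p$ by the very definition of the $p'$-part, not because of the height-$0$ assumption; the choice of a height-$0$ character is not needed.
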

\begin{proof}
Let $I:\ZZ\Irr(B)\to\ZZ\Irr(b_D)$ be a perfect isometry and $\chi\in\Irr(B)$. Let $\gamma\in\ZZ$ such that $I(\chi)(1)_{p'}\gamma\equiv\chi(1)_{p'}\pmod{p}$. Now the claim follows from \autoref{degcorr}.
\end{proof}

Isaacs--Navarro~\cite{IsaacsNavarro} conjectured that $\gamma=|G:\N_G(D)|_{p'}$ works in the situation of \autoref{IN}. In particular, if $B$ has maximal defect, $\gamma=1$ by Sylow's theorem. Broué~\cite[Remarque on p. 65]{Broue} states without proof that this holds for the principal block (he informed the author that his claim relies on the additional assumption that the trivial character maps to the trivial character). 

\addcontentsline{toc}{section}{Acknowledgment}
\section*{Acknowledgment}
I like to thank Michel Broué for answering some questions during a conference in Banff. 
Moreover, I appreciate values comments by anonymous referees. 
Parts of this work were written while I was in residence at the Mathematical Sciences Research Institute in Berkeley (Spring 2018) with the kind support by the National Science Foundation (grant DMS-1440140).
This work is also supported by the German Research Foundation (projects SA \mbox{2864/1-1} and SA \mbox{2864/3-1}). 

\addcontentsline{toc}{section}{References}

\end{document}